 \newtheorem{thm}{Theorem}[section]
 \newtheorem{lem}[thm]{Lemma}
 \newtheorem{prop}[thm]{Proposition}
 \theoremstyle{definition}
 \newtheorem{rem}[thm]{Remark}
 \newtheorem{ex}[thm]{Example}  %
 \numberwithin{equation}{section}
 \numberwithin{equation}{section}
\newcommand{\R}{{\mathbb R}}
\newcommand{\D}{{\mathbb D}}
\newcommand{\C}{{\mathbb C}}
\newcommand{\N}{{\mathbb N}}
\newcommand{\cL}{{\mathcal L}}
\newcommand{\cs}{\mathsf{s}}
\newcommand\Ker{\mathop{\rm Ker}}
\begin{document}

%
%
%
%

\title[Spectral and ergodic properties of the operator $B(r,s)$ ]
 {Spectral and ergodic properties of the operator $B(r,s)$ over power series spaces $\Lambda_\infty(\alpha)$ of infinite type and  their duals}

\author[A.A. Albanese, C. Mele]{Angela A. Albanese, Claudio Mele}

\address{Angela A. Albanese\\
	Dipartimento di Matematica e Fisica ``E. De Giorgi''\\
	Universit\`a del Salento- C.P.193\\
	I-73100 Lecce, Italy}
\email{angela.albanese@unisalento.it}

\address{Claudio Mele\\
	Dipartimento di Matematica e Fisica ``E. De Giorgi''\\
	Universit\`a del Salento- C.P.193\\
	I-73100 Lecce, Italy}
\email{claudio.mele1@unisalento.it}

\thanks{\textit{Mathematics Subject Classification 2020:}
	 47B37, 47A10,  47A35.}
\keywords{Generalized difference operator, power bounded operator, mean ergodic operator,  spectrum, weighted $l_p$ sequence spaces, power series spaces of infinite type}

\begin{abstract}
	In this paper, we investigate the spectral and ergodic properties of the linear operator $B(r,s)$ acting on power series spaces $\Lambda_\infty(\alpha)$ of infinite type and on their strong duals.  Precisely, we provide a complete characterization of its fine spectrum and establish necessary and sufficient conditions for the operator to be power bounded and (uniformly) mean ergodic.
\end{abstract}

\maketitle
\section{Introduction}
The spectral problem of infinite matrices has attracted significant interest in mathematics as well as in applied sciences, leading to  an increasing research in this setting over the years.
This is because spectral properties  provide a useful tool for analyzing operators associated with various problems. For example, infinite systems of linear equations can be represented using infinite matrices with appropriate coefficients. Another important application is the study of eigenvalues of integral operators and their corresponding infinite matrices.

An extensive research has been carried out in this field, with contributions from numerous mathematicians. 
In particular, 
the spectral properties of the  operator $B(r, s)$ defined by  the double band matrix
$$
B(r,s)=\left[\begin{matrix}r&0&0&\dots \\ s&r&0&\dots \\ 0&s&r&\dots \\ \vdots&\vdots&\vdots&\ddots\end{matrix}\right],
$$
with $r,s\in\R$, $r,s \neq 0$, i.e., 
$(B(r, s)x)_n = sx_{n-1}+ rx_n$, for  $n\in\N_0$, 
with $x_{-1} := 0$, were studied by several authors.  We note that the
operator $B(r, s)$ is reduced to the right-shift, difference and Zweier matrices in special
cases $(r, s)$ = $(0, 1)$, $(r, s) = (1,-1)$ and $(r, s) = (r, 1 - r)$, respectively. We mention that  the fine spectrum of the operator $B(r,s)$ was determined by Altay and Ba\c{s}ar  \cite{AB}, Furkan, Bilgi\c{c} and Kayaduman \cite{F}, Bilgi\c{c} and
Furkan \cite{BF2} over the sequence spaces $c_0$ and $c$, $l_1$ and $bv$, $l_p$ and $bv_p$, for $1 < p <\infty$,
respectively. The case $(r,s)=(1,-1)$ was studied by Altay and Ba\c{s}ar  \cite{ALBA}, Kayaduman
and Furkan \cite{KF}, Akhmedov and Ba\c{s}ar \cite{AKB,AKB2} over the sequence spaces $c_0$ and $c$, $l_1$ and
$bv$,  $l_p$ ($1\leq p<\infty$), $bv_p$ ($1\leq p<\infty$), respectively. The operator
$B(r, s)$ was  generalized to the triple
band operator $B(r, s, t)$ and its spectral properties were also widely studied. In particular,   Bilgi\c{c} and Furkan  \cite{BF1} examined
the  fine spectrum of $B(r, s, t)$  over the sequence spaces $l_1$ and $bv$. Further studies by Altay, Bilgi\c{c} and Furkan \cite{ABF} investigated its fine spectrum over $c_0$ and $c$,
while Ba\c{s}ar, Bilgi\c{c} and Furkan \cite{BBF} determined its spectrum over $l_p$ and $bv_p$, with $1<p<\infty$.
Recently, Saad R. El-Shabrawy \cite{Sh-1} studied the spectral properties of $B(r,s,t)$ on the Hahn sequence space $h$ and the space $bv_0$, which consists of null sequences of bounded variation. For further results and generalizations of the operator $B(r,s)$  we refer to \cite{AKEL-1,AKEL,AKEL-2,Sh,Sh-3}.

The aim of this paper is to describe the spectral and ergodic properties of the operator $B(r,s)$ acting on power series spaces $\Lambda_\infty(\alpha)$ of infinite type, as well as on their duals. Power series spaces of infinite type form an important class of K\"othe spaces. For instance, the space of holomorphic functions on $\C$ and the space $\cs$ of rapidly decreasing sequences are examples of power  series spaces of infinite type. To this end, we first examine the operator $B(r,s)$ when it acts on  weighted sequence Banach spaces, a setting not previously explored in the literature. In particular, we investigate the fine spectrum of $B(r,s)$ acting on $l_p(v)$ for $1 < p < \infty$, with a given weight $v$, providing a complete characterization. Moreover, we establish necessary and sufficient conditions under which the operator $B(r,s)$ is power bounded and (uniformly) mean ergodic in $l_p(v)$. Subsequently, we extend our analysis to the power series spaces $\Lambda_\infty(\alpha)$ of infinite type and their duals $\Lambda_\infty(\alpha)'$. We show that $B(r,s)$ acts continuously on both spaces $\Lambda_\infty(\alpha)$ and $\Lambda_\infty(\alpha)'$, provided a suitable condition on the sequence $\alpha$ is satisfied. Finally, we describe the spectral and ergodic properties of $B(r,s)$ over these non-normable sequence spaces.

The paper is organized as follows. Section 2 introduces fundamental concepts of spectral theory, including the fine spectrum of linear operators in Banach spaces, along with basic notions of dynamics. In Section 3, we recall the main results concerning $B(r,s)$ in the classical Banach spaces $l_p$ from \cite{BF2}, and then study its spectral and ergodic properties in the weighted sequence spaces $l_p(v)$. Section 4 begins with a presentation of the Fr\'echet spaces $\Lambda_\infty(\alpha)$ and their strong duals $\Lambda_\infty(\alpha)'$, along with relevant notation and results pertaining to the non-normable setting. We then investigate the spectral and ergodic properties of the operator $B(r,s)$ in these spaces.


\section{Definitions and general results}
Let $X$ be a non trivial complex Banach space. We  denote by $\cL(X)$ the set of all bounded (i.e., continuous) linear operators on $X$ into itself. Equipped with the topology of pointwise convergence $\tau_s$  on $X$ (i.e., the strong operator topology), the space $\cL(X)$ is denoted by $\cL_s (X)$ and with the topology $\tau_b$ of uniform convergence on the closed unit ball $B_X$ of $X$ (i.e., on bounded sets of $X$), the space $\cL(X)$ is denoted by $\cL_b(X)$.

 For $T\in \cL(X)$,  the adjoint $T^*$ of $T$ is the continuous linear operator on the strong dual $X^*$ of $X$, defined by $(T^*\phi)(x) := \phi(Tx)$ for all $\phi \in X^*$ and $x \in X$. Clearly, $T^*\in \cL(X^*)$. 
We denote by $R(T)$ the range of $T$, i.e.,
$R(T) := \{y \in X \colon y = Tx,\; x \in X\}.$

	Given a complex Banach space  $X$ and $T\in \cL(X)$, the resolvent set $\rho(T,X)$ of $T$ consists of all $\alpha\in\C$ such that $R(\lambda,T):=(T-\alpha I)^{-1}$ exists in $\cL(X)$. The set $\sigma(T,X):=\C\setminus \rho(T,X)$ is called the \textit{spectrum} of $T$. The set $\sigma(T,X)$ is decomposed into three mutually disjoint parts, which constitute the \textit{fine spectrum} of $T$. Namely, we have 
the \textit{point spectrum}
\[
\sigma_{p}(T,X):=\{\alpha\in\C\,\colon \Ker(T-\alpha I)\not=\{0\} \}
\]
of $T$, the \textit{continuous spectrum}
\begin{align*}
	& \sigma_c(T,X):=\\
	&\{\alpha\in\C\,\colon \Ker(T-\alpha I)=\{0\} \mbox{ and}\ (T-\alpha I)(X)\not=X\  \mbox{with}\ \overline{(T-\alpha I)(X)}=X\}
\end{align*}
of $T$, and the \textit{residual spectrum}
\[
\sigma_r(T,X):=\{\alpha\in\C\,\colon \Ker(T-\alpha I)=\{0\} \mbox{ and}\ \overline{(T-\alpha I)(X)} \not=X\}
\]
of $T$. A point $\lambda\in \sigma_{p}(T,X)$ is called an \textit{eigenvalue}  of $T$. Simple arguments of duality show that   $\sigma(T,X)=\sigma(T^*,X^*)$ and that
$	\sigma_{p}(T,X)\subseteq \sigma_{p}(T^*,X^*)\cup\sigma_{r}(T^*,X^*)$,  $\sigma_r(T,X)\subseteq \sigma_{p}(T^*,X^*)$.
More precisely, we have
$\{\lambda\in\sigma_{p}(T,X)\,\colon \overline{(\lambda I-T)(X)}\not=X\}\subseteq \sigma_{p}(T^*,X^*)$
and
$	\{\lambda\in\sigma_{p}(T,X)\,\colon \overline{(\lambda I-T)(X)}=X\}\subseteq \sigma_r(T^*,X^*)$.

We denote by $\omega$ the space of all complex valued sequences $(x_n)_{n\in\N_0}\in\C^{\N_0}$, where $\N_0:=\{0,1,\ldots, m,\ldots\}$. Note that $\omega$, equipped with the locally convex topology of  coordinatewise convergence, is a Fr\'echet space. A normed space $\lambda$ is called a \textit{sequence space} if $\lambda\subseteq \omega$ with a continuous inclusion.

We also recall that 
an operator $T\in \cL(X)$, with $X$ a Banach space,  is called \textit{power bounded} if $\{T^n\}_{n\in\N_0}$ is an equicontinuous subset of $\cL(X)$ (i.e., $\sup_{n\in\N_0}\|T^n\|<\infty$). The  Ces\`aro means of an operator $T\in\cL(X)$ are defined by
\[
T_{[n]} :=\frac{1}{n}\sum_{m=1}^nT^m,\quad n\in\N.
\]
The operator  $T$ is called \textit{mean ergodic} (resp. \textit{uniformly mean ergodic}) if $\{T_{[n]}\}_{n\in\N}$ 
is a convergent
sequence in $\cL_s(X)$ (resp. in $\cL_b(X)$). 
The  Ces\`aro means of $T$ satisfy the following identities
\begin{equation}\label{cesarof}
	\frac{T^n}{n}= T_{[n]}-\frac{n-1}{n} T_{[n-1]}, \quad  n \geq 2.
\end{equation}
So, it is clear that $\frac{T^n}{n}\to 0$ in $\cL_s(X)$ as $n\to\infty$, whenever $T$ is mean ergodic.
Furthermore, if $T\in\cL(X)$ is mean ergodic, the operator $P:=\lim_{n\to\infty}T_{[n]}$ in $\cL_s(X)$ is a projection on $X$ satisfying ${\rm Im}\, P = \Ker(I- T)$ and $\Ker P =\overline{{\rm Im}\,(I-T)}$, with
\[
X = \overline{{\rm Im}\,(I-T)}\oplus \Ker(I-T),
\] 
(see \cite[Ch. VIII; \S 3]{Y}).  In reflexive complex Banach spaces $X$ every power bounded operator is necessarily mean ergodic  (see \cite[Corollary 4, \S VIII.5]{DS}). 

An operator $T\in\cL(X)$, with $X$ a Banach space,  is said to be \textit{hypercyclic} if there exists $x\in X$ whose orbit under $T$ is dense in $X$, that is the set ${\rm orb}(x, T) := \{T^nx \colon  n\in\N_0\}$ is dense in $X$. The operator $T$ is said to be  \textit{supercyclic} if  there exists $x\in X$ whose projective orbit under $T$ is dense in $X$, that is the set $\{\lambda T^nx \colon  n\in\N_0, \ \lambda\in\C\}$ is dense in $X$.

No power-bounded operator is hypercyclic, but can be supercyclic. Every hypercyclic operator is always supercyclic. The converse is not true in general.

\section{The operator $B(r, s)$ on sequence spaces}
\subsection{$B(r,s)$ acting on the Banach spaces $l_p$, $1<p<\infty$.}
Let us consider the generalized difference operator $B(r, s)$ represented by the band matrix with $r, s \in \R$, $r,s \neq 0$, defined by
$$B(r,s)=\left[\begin{matrix}r&0&0&\dots \\ s&r&0&\dots \\ 0&s&r&\dots \\ \vdots&\vdots&\vdots&\ddots\end{matrix}\right].$$
This means that $B(r,s)x:=(sx_{n-1}+rx_n)_{n\in\N_0}$, where $x\in\omega$ with $x_{-1}:=0$. 

In this section, we first recall the fine spectrum classification of the operator $B(r,s)$ acting on the sequence space $l_p$, for $1<p<\infty$, given in \cite{BF2}.  
 We begin with an estimate of the operator norm of the operator $B(r , s )$, contained in \cite[Theorem 2.1]{BF2}.
 
\begin{prop}\label{ineqb} Let $r,s\in\R$ and $r,s\neq 0$,
$1 < p < \infty$. Then $B(r , s)\colon l_p \to l_p$ is a bounded linear operator satisfying the inequalities
$$(|r|^p + |s|^p)^{\frac{1}{p}} \leq \|B(r, s)\|\leq |r|+|s|.$$
\end{prop}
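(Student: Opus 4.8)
The plan is to establish the two inequalities separately, both by direct estimation on the action of $B(r,s)$ on an arbitrary $x=(x_n)_{n\in\N_0}\in l_p$.

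\textbf{Upper bound.} First I would write $B(r,s)x = s\,S x + r\,x$, where $S$ denotes the right-shift operator $(Sx)_n = x_{n-1}$ (with $x_{-1}:=0$). Since $S$ is an isometry on $l_p$, we have $\|Sx\|_p = \|x\|_p$, and the triangle inequality in $l_p$ gives
\[
\|B(r,s)x\|_p \le |s|\,\|Sx\|_p + |r|\,\|x\|_p = (|r|+|s|)\,\|x\|_p,
\]
so $\|B(r,s)\| \le |r|+|s|$; this also shows $B(r,s)\in\cL(l_p)$.

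\textbf{Lower bound.} For the lower estimate I would test the operator on the unit vector $e_0 = (1,0,0,\dots)$. Then $B(r,s)e_0 = (r,s,0,0,\dots)$, so $\|B(r,s)e_0\|_p = (|r|^p+|s|^p)^{1/p}$ while $\|e_0\|_p = 1$, whence $\|B(r,s)\| \ge (|r|^p+|s|^p)^{1/p}$. Combining the two bounds yields the claimed chain of inequalities.

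\textbf{Main obstacle.} Honestly, there is no serious obstacle here: both bounds are essentially one-line computations once one recognizes the decomposition $B(r,s) = sS + rI$ and the fact that $S$ is an $l_p$-isometry. The only mild point of care is verifying that the infimum/supremum behavior is as claimed — i.e., that $e_0$ genuinely realizes the lower bound and that the triangle-inequality estimate is the right way to get the upper bound (rather than, say, a Schur-test computation on the matrix entries, which would give the same $|r|+|s|$ but with more bookkeeping). One could alternatively prove the upper bound column-by-column: each column of $B(r,s)$ has $\ell_1$-norm and the relevant $\ell_\infty$/row considerations are bounded by $|r|+|s|$, but the operator-decomposition argument is cleaner and I would prefer it.
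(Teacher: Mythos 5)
Your proof is correct and follows essentially the same route as the source: the paper itself only cites \cite[Theorem 2.1]{BF2} for this estimate, but its own proof of the weighted analogue (Proposition \ref{contbrs} together with Remark \ref{norm}) uses exactly your two ingredients, namely the termwise triangle inequality (equivalently, writing $B(r,s)=sS+rI$ with $S$ an isometry) for the upper bound and evaluation on basis vectors $e_k$ for the lower bound. Your observation that testing on $e_0$ alone already yields $(|r|^p+|s|^p)^{1/p}$ is fine, since in the unweighted case all $e_k$ give the same value.
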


The complete characterization of the spectrum of $B(r , s )$, acting on the sequence space $l_p$, is provided in \cite[Theorem 2.3]{BF2}. The point, continuous and residual spectrum of $B(r , s )$, acting on the sequence space $l_p$, have been described in  \cite[Theorems 2.4, 3.8, 2.11]{BF2}. If we set $\D_r(z):=\{w\in\C\colon |w-z|<r\}$ for $z\in\C$ and $r>0$ (for $z=0$ we simply write $\D_r$ instead of  $\D_r(0)$ and for $z=0$ and $r=1$ we simply write $\D$ instead of  $\D_1(0)$), the fine description of the spectrum of $B(r , s )$, acting on the sequence space $l_p$, is contained in the following theorem.

\begin{thm}\label{spbrs} Let $r,s\in\R$ and $r,s\neq 0$, 
	$1 < p < \infty$. 
\begin{enumerate}
\item $\sigma(B(r , s), l_p)=\left\{\alpha\in\C\colon |r-\alpha|\leq |s|\right\}=\overline{\D}_{|s|}(r)$;
\item $\sigma_p(B(r , s), l_p)=\emptyset;$
\item $\sigma_r(B(r , s,), l_p)=\sigma_p(B^*(r , s), l^*_p)=\left\{\alpha\in\C\colon |r-\alpha|<  |s|\right\}=\D_{|s|}(r);$
\item $\sigma_c(B(r , s), l_p)=\left\{\alpha\in\C\colon |r-\alpha|= |s|\right\}=\partial \D_{|s|}(r).$
\end{enumerate}
\end{thm}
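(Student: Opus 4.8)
The plan is to prove the four assertions in the order $(2)$, $(1)$, $(3)$, $(4)$, using the duality relations recalled in Section~2 together with the reflexivity of $l_p$ and the identification $l_p^*=l_q$, $1/p+1/q=1$. Throughout I write $S$ for the right shift on $l_p$, $(Sx)_n=x_{n-1}$ with $(Sx)_0=0$, which is an isometry (so $\|S\|=1$), and I use the decomposition $B(r,s)-\alpha I=(r-\alpha)I+sS$, which in particular re-proves boundedness.

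To get $(2)$, I would read $(B(r,s)-\alpha I)x=0$ coordinatewise: $(r-\alpha)x_0=0$ and $sx_{n-1}+(r-\alpha)x_n=0$ for $n\geq1$. If $\alpha\neq r$ this forces $x_0=0$ and then $x_n=0$ by induction; if $\alpha=r$ then $sx_{n-1}=0$ for all $n\geq1$, and $s\neq0$ again gives $x=0$. Hence $\Ker(B(r,s)-\alpha I)=\{0\}$ for every $\alpha\in\C$, i.e. $\sigma_p(B(r,s),l_p)=\es$. For $(1)$, if $|r-\alpha|>|s|$ then $\|\tfrac{s}{\,r-\alpha\,}S\|<1$, so $I+\tfrac{s}{\,r-\alpha\,}S$ is invertible in $\cL(l_p)$ by a Neumann series and hence so is $B(r,s)-\alpha I=(r-\alpha)\big(I+\tfrac{s}{\,r-\alpha\,}S\big)$; thus $\sigma(B(r,s),l_p)\su\ov{\D}_{|s|}(r)$. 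For the reverse inclusion I would show $B(r,s)-\alpha I$ is not surjective when $|r-\alpha|\leq|s|$: if $\alpha=r$ the operator is $sS$, which does not have $e_0$ in its range; if $\alpha\neq r$, the equation $(B(r,s)-\alpha I)x=e_0$ has a unique solution in $\omega$, namely $x_n=(-s)^n(r-\alpha)^{-(n+1)}$, and $\sum_n|x_n|^p=|r-\alpha|^{-p}\sum_n\big(|s|/|r-\alpha|\big)^{np}$ diverges exactly when $|s|\geq|r-\alpha|$, so $e_0\notin R(B(r,s)-\alpha I)$. Combined with $(2)$ this gives $\ov{\D}_{|s|}(r)\su\sigma(B(r,s),l_p)$, hence equality.

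For $(3)$, on $l_p^*=l_q$ the adjoint acts as $(B^*(r,s)x)_n=rx_n+sx_{n+1}$, and $(B^*(r,s)-\alpha I)x=0$ gives the recursion $x_{n+1}=\tfrac{\alpha-r}{s}x_n$, i.e. $x_n=\big(\tfrac{\alpha-r}{s}\big)^nx_0$, which lies in $l_q$ for a suitable $x_0\neq0$ precisely when $|\alpha-r|<|s|$; hence $\sigma_p(B^*(r,s),l_p^*)=\D_{|s|}(r)$. The general inclusion $\sigma_r(T,X)\su\sigma_p(T^*,X^*)$ from Section~2 yields $\sigma_r(B(r,s),l_p)\su\D_{|s|}(r)$; conversely, if $|r-\alpha|<|s|$ then $\Ker(B^*(r,s)-\alpha I)\neq\{0\}$, and since $\ov{R(B(r,s)-\alpha I)}={}^{\perp}\Ker(B^*(r,s)-\alpha I)$ this range is proper, so by $(2)$ the point $\alpha$ lies in $\sigma_r(B(r,s),l_p)$. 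Therefore $\sigma_r(B(r,s),l_p)=\D_{|s|}(r)=\sigma_p(B^*(r,s),l_p^*)$. Finally $(4)$ is immediate from the disjoint decomposition $\sigma=\sigma_p\cup\sigma_r\cup\sigma_c$: $\sigma_c(B(r,s),l_p)=\ov{\D}_{|s|}(r)\s\D_{|s|}(r)=\partial\D_{|s|}(r)$.

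The step I expect to need the most care is the residual spectrum in $(3)$: one must invoke the annihilator/pre-annihilator duality correctly to see that non-density of $R(B(r,s)-\alpha I)$ in $l_p$ is equivalent to $\Ker(B^*(r,s)-\alpha I)\neq\{0\}$ in $l_q$, and — for consistency with $(4)$ — that the boundary circle $|r-\alpha|=|s|$ cannot leak into $\sigma_r$, which holds precisely because $\sigma_p(B^*(r,s),l_p^*)$ is exactly the \emph{open} disc. Everything else reduces to a Neumann series, a geometric-series summability check, and a first-order linear recursion.
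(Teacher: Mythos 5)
Your proof is correct: the coordinatewise kernel computation gives (2), the Neumann series and the divergent formal solution of $(B(r,s)-\alpha I)x=e_0$ give the two inclusions in (1), the geometric eigenvector of the adjoint together with the duality $\sigma_r(T,X)\su\sigma_p(T^*,X^*)$ and $\ov{R(T-\alpha I)}={}^{\perp}\Ker(T^*-\ov{}\,\alpha$-free$)\,$-type annihilator identity give (3), and the disjoint decomposition of the spectrum (exhaustive in Banach spaces by the bounded inverse theorem) gives (4). Note, however, that the paper does not prove this theorem at all; it is quoted from Bilgi\c{c}--Furkan \cite{BF2}, and the paper's own arguments appear only for the weighted analogues in Section 3.2. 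There the inclusion $\sigma(B(r,s),l_p(v))\su\ov{\D}_{L|s|}(r)$ is obtained quite differently: one writes down the explicit lower-triangular inverse matrix, checks by hand that it defines bounded operators on $l_1(v)$ and on $l_\infty(v)$, and then invokes Taylor's interpolation-type theorem to conclude boundedness on $l_p(v)$; the reverse inclusion is obtained from $(B(r,s)-\alpha I)^{-1}e_0\in l_p(v)$ plus a closure argument, rather than your direct non-surjectivity statement on the whole closed disc. Your route via $B(r,s)-\alpha I=(r-\alpha)\bigl(I+\tfrac{s}{r-\alpha}S\bigr)$ with $S$ an isometric shift is shorter and more elementary, but it exploits a feature special to the unweighted case: on $l_p(v)$ the shift has norm $N=\sup_n v_{n+1}/v_n$ while the spectrum is governed by $L=\max\lim_n v_{n+1}/v_n$, so the crude norm bound $\|\tfrac{s}{r-\alpha}S\|<1$ would only yield the non-sharp disc of radius $N|s|$; this is precisely why the paper needs the finer matrix estimates there. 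Your treatment of (2), (3) and (4) — adjoint eigenvalue recursion, the duality inclusions, and the disjointness of $\sigma_p,\sigma_r,\sigma_c$ — coincides in substance with what the paper does in the weighted setting (Theorems 3.9--3.11), and your remark that the boundary circle cannot enter $\sigma_r$ because $\sigma_p(B^*(r,s),l_p^*)$ is exactly the open disc is the right way to close the argument.
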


\begin{rem}
Since $\sigma_p(B^*(r , s ), l^*_p)=\D_{|s|}(r)$, we get that $B(r , s )$ is not supercyclic in $l^*_p$ (cf. \cite[Proposition 1.26]{BM}).
\end{rem}

We now pass to study the (uniform) mean ergodicity of the operator $B(r , s )$, acting on the space $l_p$. To this end, we observe the following  fact.

\begin{lem} \label{incspe}Let $r,s\in\R$ and $r,s\neq 0$, $L>0$ and 
	$1 < p < \infty$. Then $\overline{\D}_{L|s|}(r)\subseteq \overline{\D}$ if, and only if, $|r|+L|s|\leq 1$.
\end{lem}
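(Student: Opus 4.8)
The plan is to reduce the inclusion of closed disks to an elementary condition on their radii and centers, exploiting that $r\in\R$ is real. Recall $\overline{\D}=\overline{\D}_1(0)$ is the closed unit disk and $\overline{\D}_{L|s|}(r)=\{w\in\C\colon |w-r|\le L|s|\}$ is the closed disk of radius $L|s|$ centered at $r$.

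For the direction ($\Leftarrow$): assume $|r|+L|s|\le 1$. If $w\in\overline{\D}_{L|s|}(r)$, then by the triangle inequality $|w|\le |w-r|+|r|\le L|s|+|r|\le 1$, so $w\in\overline{\D}$. This gives $\overline{\D}_{L|s|}(r)\subseteq\overline{\D}$.

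For the direction ($\Rightarrow$): assume $\overline{\D}_{L|s|}(r)\subseteq\overline{\D}$. Since $r\in\R$, the point $w_0$ on the boundary of $\overline{\D}_{L|s|}(r)$ that is farthest from the origin is $w_0=r+L|s|$ if $r\ge 0$ and $w_0=r-L|s|$ if $r<0$; in either case $w_0=\operatorname{sgn}(r)\,(|r|+L|s|)$ (with the convention $\operatorname{sgn}(0)=1$), and $|w_0|=|r|+L|s|$. Since $w_0\in\overline{\D}_{L|s|}(r)\subseteq\overline{\D}$, we get $|r|+L|s|=|w_0|\le 1$, as required. (Equivalently, one may simply note $\sup_{w\in\overline{\D}_{L|s|}(r)}|w|=|r|+L|s|$, which holds for any complex center $r$, and the inclusion is equivalent to this supremum being $\le 1$.)

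This argument is entirely elementary; there is no real obstacle. The only mild point to be careful about is the case $r=0$, where both implications are immediate since $\overline{\D}_{L|s|}(0)\subseteq\overline{\D}$ iff $L|s|\le 1$, consistent with the stated condition.

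\begin{proof}
Write $\overline{\D}=\{w\in\C\colon|w|\le 1\}$ and $\overline{\D}_{L|s|}(r)=\{w\in\C\colon|w-r|\le L|s|\}$.

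Suppose first that $|r|+L|s|\le 1$. If $w\in\overline{\D}_{L|s|}(r)$, then by the triangle inequality
\[
|w|\le |w-r|+|r|\le L|s|+|r|\le 1,
\]
so $w\in\overline{\D}$. Hence $\overline{\D}_{L|s|}(r)\subseteq\overline{\D}$.

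Conversely, suppose $\overline{\D}_{L|s|}(r)\subseteq\overline{\D}$. If $r\ge 0$, consider the point $w_0:=r+L|s|\in\R$; if $r<0$, consider $w_0:=r-L|s|\in\R$. In both cases $|w_0-r|=L|s|$, so $w_0\in\overline{\D}_{L|s|}(r)\subseteq\overline{\D}$, and moreover $|w_0|=|r|+L|s|$. Therefore $|r|+L|s|=|w_0|\le 1$.
\end{proof}
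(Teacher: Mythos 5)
Your proof is correct and follows essentially the same route as the paper: the easy direction is the same triangle-inequality argument, and for the converse both you and the paper test the real endpoint(s) $r\pm L|s|$ of the disk against the unit disk to obtain $|r|+L|s|\le 1$.
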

\begin{proof}
 If $\overline{\D}_{L|s|}(r)\subseteq \overline{\D}$, then we have $-1\leq r-L|s|\leq r\leq r+L|s|\leq 1$. So, it follows that $|r|+L|s|\leq 1$. 

Conversely, suppose that $|r|+L|s|\leq 1$. If $\alpha\in \overline{\D}_{L|s|}(r)$, then $|r-\alpha|\leq L|s|$. Hence, $|\alpha|\leq |\alpha-r|+|r|\leq L|s|+|r|\leq 1.$ Accordingly, $\alpha\in\overline{\D}$.
\end{proof}

\begin{prop}\label{me}
Let $r,s\in\R$ and $r,s\neq 0$,  $1<p<\infty$. The following statements are equivalent:
\begin{enumerate}
\item $\frac{B^n(r,s)}{n}\to0$ in $\cL_s(l_p)$ as $n\to\infty$;
\item $B(r,s)$ is mean ergodic in $\cL(l_p)$; 
\item $B(r,s)$ is power bounded in $\cL(l_p)$;
\item $|r|+|s|\leq 1$.
\end{enumerate}
\end{prop}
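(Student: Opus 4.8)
The plan is to establish the cycle of implications (2)$\Rightarrow$(1)$\Rightarrow$(4)$\Rightarrow$(3)$\Rightarrow$(2), exploiting that $l_p$ is reflexive for $1<p<\infty$ so that Dunford--Schwartz (cited in the excerpt) gives (3)$\Rightarrow$(2) for free, and the general identity \eqref{cesarof} gives (2)$\Rightarrow$(1) immediately since mean ergodicity forces $T_{[n]}-\frac{n-1}{n}T_{[n-1]}\to 0$ in $\cL_s(l_p)$. So the real content is (1)$\Rightarrow$(4) and (4)$\Rightarrow$(3).

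For (1)$\Rightarrow$(4), I would argue by contraposition using the spectral radius. By Theorem~\ref{spbrs}(1), $\sigma(B(r,s),l_p)=\overline{\D}_{|s|}(r)$, so the spectral radius of $B(r,s)$ equals $\max\{|\alpha|:|r-\alpha|\le|s|\}=|r|+|s|$. If $|r|+|s|>1$, then by Gelfand's formula $\|B^n(r,s)\|^{1/n}\to|r|+|s|>1$, hence $\|B^n(r,s)\|\to\infty$; in particular $\frac{\|B^n(r,s)\|}{n}\to\infty$, and one can then pick a fixed unit vector (e.g. $e_0$, or exploit that the norms of $B^n(r,s)e_0$ already grow like $(|r|+|s|)^n$ up to polynomial factors, since $B(r,s)e_0$ has entries given by binomial-type coefficients $\binom{n}{k}r^{n-k}s^k$) on which $\frac{B^n(r,s)}{n}$ does not go to zero. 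So (1) fails. Alternatively, and perhaps more cleanly: if $\frac{B^n}{n}\to 0$ in $\cL_s$, then by the uniform boundedness principle $\sup_n\frac{\|B^n\|}{n}<\infty$, which forces the spectral radius to be $\le 1$, i.e.\ $|r|+|s|\le 1$.

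For (4)$\Rightarrow$(3), suppose $|r|+|s|\le 1$. Here I would use Proposition~\ref{ineqb}, which gives $\|B(r,s)\|\le|r|+|s|\le 1$, hence $\|B^n(r,s)\|\le\|B(r,s)\|^n\le 1$ for all $n$, so $B(r,s)$ is power bounded. This is the easy direction and presents no obstacle. The main obstacle in the whole argument is really just making sure the spectral-radius bound in (1)$\Rightarrow$(4) is invoked correctly: one needs that $\frac{\|B^n\|}{n}$ bounded implies $\limsup\|B^n\|^{1/n}\le 1$, which is elementary (if $\|B^n\|\le Cn$ then $\|B^n\|^{1/n}\le C^{1/n}n^{1/n}\to 1$), and then that $\limsup\|B^n\|^{1/n}$ equals the spectral radius $|r|+|s|$, which is exactly Theorem~\ref{spbrs}(1) combined with Gelfand's formula. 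Assembling: (2)$\Rightarrow$(1) by \eqref{cesarof}; (1)$\Rightarrow$(4) by the uniform boundedness principle and the spectral radius computation; (4)$\Rightarrow$(3) by Proposition~\ref{ineqb} and submultiplicativity of the norm; (3)$\Rightarrow$(2) by reflexivity of $l_p$ and the Dunford--Schwartz mean ergodic theorem.
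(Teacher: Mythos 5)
Your proof is correct and follows essentially the same route as the paper: (2)$\Rightarrow$(1) via the Ces\`aro identity \eqref{cesarof}, (4)$\Rightarrow$(3) via Proposition \ref{ineqb} and submultiplicativity, (3)$\Rightarrow$(2) via reflexivity of $l_p$ and the Dunford--Schwartz mean ergodic theorem, and (1)$\Rightarrow$(4) resting on the spectral description in Theorem \ref{spbrs}(1). The only (harmless) difference is in (1)$\Rightarrow$(4): where the paper cites the Dunford--Schwartz lemma that $T^n/n\to 0$ in $\cL_s$ forces $\sigma(T,l_p)\subseteq\overline{\D}$ and then uses Lemma \ref{incspe}, you re-derive the same bound in a self-contained way via the uniform boundedness principle and Gelfand's formula, computing the spectral radius $|r|+|s|$ directly.
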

\begin{proof}
(1)$\Rightarrow$(4): Since $\frac{B^n(r,s)}{n}\to\infty$ in $\cL_s(l_p)$ as $n\to\infty$, by 	\cite[Lemma 1, \S VIII.8, page 709]{DS} (cf. also \cite[Proposition 5.1]{AM}), we have that $\sigma(B(r , s ), l_p)\subseteq \overline{\D}$. In view of Theorem \ref{spbrs}(1) and Lemma \ref{incspe}, it follows that $|r|+|s|\leq 1$. 

(4)$\Rightarrow$(3): If $|r|+|s|\leq 1$, due to Proposition \ref{ineqb} we have $\|B(r,s)\|\leq 1$ and hence, $\|B^n(r,s)\|\leq 1$ for all $n\in\N_0$. Accordingly, the operator $B(r,s)$ is power bounded.

(3)$\Rightarrow$(2): Since $l_p$ is a reflexive Banach space, every power bounded operator is mean ergodic, \cite[Corollary 4, \S VIII.5]{DS} (see also \cite[Corollary 2.7, Remark 2.8]{ABR-0}).

(2)$\Rightarrow$(1): It follows by \eqref{cesarof}.
\end{proof}


\begin{prop}\label{ume}
Let $r,s\in\R$ and $r,s\neq 0$,  $1<p<\infty$. Consider the following statements:
\begin{enumerate}
\item $|r|+|s|< 1$;
\item $B^n(r,s)\to0$ in $\cL_b(l_p)$  as $n\to\infty$;
\item $B(r,s)$ is uniformly mean ergodic in $\cL(l_p)$;
\item $1\not\in\sigma(B(r,s),l_p)$.
\end{enumerate}
Then $ (1)\Rightarrow (2)\Rightarrow (3)\Rightarrow (4)$.
\end{prop}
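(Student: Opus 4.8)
The plan is to prove the chain of implications $(1)\Rightarrow(2)\Rightarrow(3)\Rightarrow(4)$ directly, using the norm estimate from Proposition~\ref{ineqb}, the general theory of uniformly mean ergodic operators, and the spectral description from Theorem~\ref{spbrs}.

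For $(1)\Rightarrow(2)$: if $|r|+|s|<1$, then by the upper bound in Proposition~\ref{ineqb} we have $\|B(r,s)\|\le |r|+|s|=:q<1$, hence $\|B^n(r,s)\|\le \|B(r,s)\|^n\le q^n\to 0$ as $n\to\infty$. Thus $B^n(r,s)\to 0$ in $\cL_b(l_p)$.

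For $(2)\Rightarrow(3)$: assume $B^n(r,s)\to 0$ in $\cL_b(l_p)$. Then from the identity $\frac{B^n(r,s)}{n}=B(r,s)_{[n]}-\frac{n-1}{n}B(r,s)_{[n-1]}$ in \eqref{cesarof}, or more directly since the partial sums $\sum_{m=1}^n B^m(r,s)$ converge in $\cL_b(l_p)$ (the series $\sum_m \|B^m(r,s)\|$ is dominated by a convergent geometric-type series once $\|B^m(r,s)\|\to 0$ summably — indeed one gets $\|B^m(r,s)\|\le q^m$ as above under $(1)$, but to deduce $(3)$ from the weaker hypothesis $(2)$ one argues as follows), I would invoke the standard characterization: $T$ is uniformly mean ergodic if and only if $\frac{T^n}{n}\to 0$ in $\cL_b(X)$ and $\mathrm{Im}(I-T)$ is closed (see e.g. Lin's theorem, \cite{DS}). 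Hypothesis $(2)$ immediately gives $\frac{B^n(r,s)}{n}\to 0$ in $\cL_b(l_p)$; moreover $(2)$ forces $1\notin\sigma(B(r,s),l_p)$ (since $\|B^n(r,s)\|<1$ for large $n$ gives $r(B(r,s))<1$, so $I-B(r,s)$ is invertible and a fortiori $\mathrm{Im}(I-B(r,s))=l_p$ is closed), and then $B(r,s)_{[n]}=\frac{1}{n}\sum_{m=1}^n B^m(r,s)\to 0$ in $\cL_b(l_p)$ because $B(r,s)_{[n]}=\frac1n(I-B(r,s))^{-1}(B(r,s)-B^{n+1}(r,s))$ and the bracket is bounded while $\frac1n\to0$. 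Hence $B(r,s)$ is uniformly mean ergodic with limit projection $P=0$.

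For $(3)\Rightarrow(4)$: this is the general fact that if $T$ is uniformly mean ergodic then $\frac{T^n}{n}\to 0$ in $\cL_b(X)$, whence by \cite[Lemma 1, \S VIII.8]{DS} (as used in the proof of Proposition~\ref{me}) the spectral radius of $T$ is at most $1$ and, more precisely, $1$ cannot be a pole of the resolvent of positive order nor can the resolvent fail to be bounded near $1$; combined with uniform mean ergodicity, Lin's criterion yields that $\mathrm{Im}(I-T)$ is closed and $l_p=\Ker(I-T)\oplus \mathrm{Im}(I-T)$. Then I would argue that if $1\in\sigma(B(r,s),l_p)=\overline{\D}_{|s|}(r)$ (Theorem~\ref{spbrs}(1)), then since $\sigma_p(B(r,s),l_p)=\emptyset$ (Theorem~\ref{spbrs}(2)) we have $\Ker(I-B(r,s))=\{0\}$, so uniform mean ergodicity would force $\mathrm{Im}(I-B(r,s))=l_p$, i.e.\ $I-B(r,s)$ surjective with trivial kernel, hence invertible, contradicting $1\in\sigma(B(r,s),l_p)$. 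Therefore $1\notin\sigma(B(r,s),l_p)$.

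The main obstacle is the step $(2)\Rightarrow(3)$ and $(3)\Rightarrow(4)$: one must be careful to deduce closedness of $\mathrm{Im}(I-B(r,s))$ and convergence of the Ces\`aro means rather than just of $\frac{B^n(r,s)}{n}$. The clean route is to show $(2)$ already implies $1\notin\sigma(B(r,s),l_p)$ (so $I-B(r,s)$ is invertible and the Ces\`aro means converge to $0$ in operator norm via the resolvent formula $T_{[n]}=\frac1n(I-T)^{-1}(T-T^{n+1})$), giving $(2)\Rightarrow(3)$ and $(2)\Rightarrow(4)$ simultaneously; then $(3)\Rightarrow(4)$ follows from the general Lin-type characterization together with $\sigma_p(B(r,s),l_p)=\emptyset$ from Theorem~\ref{spbrs}(2), exactly as sketched above.
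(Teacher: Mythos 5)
Your proof is correct, and two of the three implications follow the paper's own route: $(1)\Rightarrow(2)$ via the norm bound $\|B(r,s)\|\le |r|+|s|<1$ from Proposition~\ref{ineqb}, and $(3)\Rightarrow(4)$ via Lin's uniform ergodic theorem giving the closedness of $(I-B(r,s))(l_p)$ and the decomposition $l_p=(I-B(r,s))(l_p)\oplus\Ker(I-B(r,s))$, combined with $\sigma_p(B(r,s),l_p)=\emptyset$ from Theorem~\ref{spbrs}(2) to conclude that $I-B(r,s)$ is bijective. Where you diverge is $(2)\Rightarrow(3)$: the paper disposes of this in one line by the elementary observation that Ces\`aro averages of a norm-null sequence are norm-null, i.e.\ $\|B_{[n]}(r,s)\|\le \frac1n\sum_{m=1}^n\|B^m(r,s)\|\to 0$, so no spectral information is needed at this stage; you instead note that $\|B^{n_0}(r,s)\|<1$ for some $n_0$ forces the spectral radius below $1$, hence $1\notin\sigma(B(r,s),l_p)$, and then use the identity $B_{[n]}(r,s)=\frac1n(I-B(r,s))^{-1}\bigl(B(r,s)-B^{n+1}(r,s)\bigr)$. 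Your detour is valid (the identity and the spectral-radius claim are both correct) and has the side benefit of yielding $(4)$ directly from $(2)$, but it is heavier than necessary; also, the preliminary musings in that step about Lin's characterization and closedness of the range are superfluous once you take the "clean route" you describe. In $(3)\Rightarrow(4)$ your extra remarks about poles of the resolvent are likewise not needed: the closed-range-plus-decomposition argument you (and the paper) give already settles it.
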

\begin{proof}
(1)$\Rightarrow$(2): Due to Proposition \ref{ineqb}, we have that $\|B(r,s)\|\leq |r|+|s|<1$.  Clearly, it follows that $B^n(r,s)\to0$ in $\cL_b(l_p)$ as $n\to\infty$. 

(2)$\Rightarrow$(3): If $B^n(r,s)\to0$ in $\cL_b(l_p)$ as $n\to\infty$, then $B_{[n]}(r,s)\to0$ in $\cL_b(l_p)$ as $n\to\infty$. This means that  $B(r,s)$ is uniformly mean ergodic.

(3)$\Rightarrow$(4):
The uniform mean ergodicity  implies that $(I-B(r,p))(l_p)$ is a closed subspace of $l_p$  and 
\begin{equation}\label{eq.somma}
(I-B(r,s))(l_p)\oplus \Ker(I-B(r,s))=l_p,
\end{equation}
\cite{Lin} (see also \cite[Theorem 5.4]{ABR-0}). Since $\sigma_p(B(r , s), l_p)=\emptyset$, we have that $\Ker(I-B(r,s))=\{0\}$. In view of  \eqref{eq.somma}, it follows that $(I-B(r,s))(l_p)=l_p$ and hence, $(I-B(r,s))\colon l_p\to l_p$ is bijective. Accordingly,  $1\notin \sigma(B(r,s), l_p)=\overline{\D}_{|s|}(r)$. 
\end{proof}

\subsection{$B(r,s)$ acting on the Banach spaces $l_p(v)$, $1 < p < \infty$}\label{lpv}

In this part, we study the fine spectrum classification of the operator $B(r,s)$ acting on the sequence space $l_p(v)$, where $1 < p < \infty$ and $v$ is a weight sequence.  

Given  a  weight  sequence $v$, i.e., a positive sequence $v=(v_n)_{n\in\N_0}\in\omega$ and $1\leq p\leq \infty$, we define as usual
\begin{align*}
l_p(v):=\left\{x=(x_n)_{n\in\N_0}\in \omega \colon  \|x\|_{p,v}:=\|(x_nv_n)_{n\in\N_0}\|_{p}<\infty \right\},
\end{align*}
where $\|\cdot\|_{p}$ denotes the usual $l_p$ norm. 
Clearly, $(l_p(v),\|\cdot\|_{p,v})$, $1\leq p\leq \infty$,
are Banach spaces.

We begin showing a necessary and sufficient condition for $B(r,s)$ to be a continuous linear operator on $l_p(v)$.

\begin{prop}\label{contbrs}
Let $1<p<\infty$ and $v$ be a weight sequence. For each $r,s\in\R$ and $r,s\neq 0$, the operator  $B(r,s)	\in \cL(l_p(v))$ if, and only if, $\{\frac{v_{n+1}}{v_{n}}\}_{n\in\N_0}\in l_\infty$.
\end{prop}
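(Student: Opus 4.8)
The plan is to reduce everything to a computation with the explicit action $B(r,s)x = (sx_{n-1}+rx_n)_{n\in\N_0}$ and the definition of the weighted norm. First I would observe that membership of $B(r,s)$ in $\cL(l_p(v))$ is equivalent to the existence of a constant $M>0$ with $\|B(r,s)x\|_{p,v}\le M\|x\|_{p,v}$ for all $x\in l_p(v)$. Writing out $\|B(r,s)x\|_{p,v}^p = \sum_{n\ge 0} |sx_{n-1}+rx_n|^p v_n^p$ (with $x_{-1}=0$), the triangle inequality in $\ell_p$ gives the upper bound $\|B(r,s)x\|_{p,v}\le |r|\,\|(x_n v_n)_n\|_p + |s|\,\|(x_{n-1}v_n)_{n\ge 1}\|_p$. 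The first term is exactly $|r|\,\|x\|_{p,v}$; for the second, I would rewrite $x_{n-1}v_n = x_{n-1}v_{n-1}\cdot\frac{v_n}{v_{n-1}}$, so that $\|(x_{n-1}v_n)_{n\ge 1}\|_p \le \big(\sup_{m\ge 0}\frac{v_{m+1}}{v_m}\big)\|x\|_{p,v}$. Hence if $q:=\sup_{n\in\N_0}\frac{v_{n+1}}{v_n}<\infty$, then $\|B(r,s)\|\le |r|+|s|q<\infty$, proving the ``if'' direction (and incidentally an analogue of Proposition \ref{ineqb} in this setting).

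For the converse, I would test $B(r,s)$ on the canonical basis vectors $e_k=(\delta_{nk})_{n\in\N_0}$. One has $\|e_k\|_{p,v}=v_k$, while $B(r,s)e_k = r e_k + s e_{k+1}$, so $\|B(r,s)e_k\|_{p,v}^p = |r|^p v_k^p + |s|^p v_{k+1}^p$. Boundedness of $B(r,s)$ with norm $M$ forces $|r|^p v_k^p + |s|^p v_{k+1}^p \le M^p v_k^p$ for every $k$, and since $s\ne0$ this yields $\frac{v_{k+1}^p}{v_k^p}\le \frac{M^p-|r|^p}{|s|^p}$ uniformly in $k$; thus $\big(\frac{v_{n+1}}{v_n}\big)_{n\in\N_0}\in l_\infty$. (If one worries that $M^p-|r|^p$ could be negative, note the left side is $\ge |r|^p v_k^p$, so automatically $M\ge|r|$.) This closes the equivalence.

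The argument is essentially routine; the only mild subtlety — which I would flag but not belabor — is the index bookkeeping in the shifted term ($x_{-1}:=0$ removes the $n=0$ contribution, so the sum over the $s$-part really starts at $n=1$ and the ratios $v_n/v_{n-1}$ that appear are exactly $\{v_{m+1}/v_m\}_{m\ge0}$), and making sure the test with $e_k$ lands inside $l_p(v)$, which is immediate since each $e_k$ is finitely supported. No deeper obstacle is expected: the heart of the matter is that a double band (two-diagonal) operator on a weighted $\ell_p$ space is bounded precisely when the one-step weight ratios are bounded, and both directions follow from the $\ell_p$ triangle inequality together with a basis-vector test.
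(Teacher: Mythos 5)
Your proposal is correct and follows essentially the same route as the paper: the upper bound via the $\ell_p$ triangle inequality with the rewriting $x_{n-1}v_n = x_{n-1}v_{n-1}\cdot\frac{v_n}{v_{n-1}}$, and the converse by testing on the basis vectors $e_k$ with $\|B(r,s)e_k\|_{p,v}^p=|r|^pv_k^p+|s|^pv_{k+1}^p$. No gaps; the remarks on index bookkeeping and on $M\ge |r|$ are fine but inessential.
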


\begin{proof}
We first suppose  that $\{\frac{v_{n+1}}{v_{n}}\}_{n\in\N_0}\in l_\infty$. Given $x\in l_p(v)$, we have that
\begin{align*}
\|B(r,s)x\|_{p,v}&=\left(\sum_{n=0}^\infty |s x_{n-1}+rx_n|^p v_n^p \right)^{\frac{1}{p}}\leq \left(\sum_{n=0}^\infty |s x_{n}|^p v_{n+1}^p \right)^{\frac{1}{p}}+\left(\sum_{n=0}^\infty |rx_n|^p v_n^p \right)^{\frac{1}{p}}\\&= |r|\|x\|_{p,v}+|s|\left(\sum_{n=0}^\infty |x_{n}|^p \frac{v_{n}^p v_{n+1}^p}{v_{n}^p} \right)^{\frac{1}{p}}\\
&\leq |r|\|x\|_{p,v}+|s|\left\| \left\{\frac{v_{n+1}}{v_{n}}\right\}_{n\in\N_0} \right\|_\infty \|x\|_{p,v}.
\end{align*}
It follows that $\|B(r,s)\| \leq \left(|r|+|s|\| \{\frac{v_{n+1}}{v_{n}}\}_{n\in\N_0}\|_\infty\right)$. 

Conversely, if  $B(r,s)\in \cL(l_p(v))$ there exists $C>0$ such that $\|B(r,s)x\|_{p,v}\leq C \|x\|_{p,v}.$  For each $k\in\N_0$ we consider the sequence $e_k=(\delta_{n,k})_{n\in\N_0}\in l_p(v)$, that satisfies in particular $\|e_k\|_{p,v}=v_k$. Then an easy computation shows that $$|s|v_{k+1}\leq\left(|r|^pv_k^p+|s|^pv_{k+1}^p\right)^{\frac{1}{p}}=\|B(r,s)e_k\|_{p,v}\leq Cv_k$$ 
for every $k\in\N_0$. This implies that $\{\frac{v_{n+1}}{v_{n}}\}_{n\in\N_0}\in l_\infty$.
\end{proof}

\begin{rem}\label{norm}
Let suppose that $\{\frac{v_{n+1}}{v_{n}}\}_{n\in\N_0}\in l_\infty$ and denote by $N:=\|\{\frac{v_{n+1}}{v_{n}}\}_{n\in\N_0}\|_\infty$.
From the proof of Proposition \ref{contbrs}, we deduce an estimate of the operator norm of $B(r,s)$ acting on  $l_p(v)$ in the spirit of Proposition \ref{ineqb}, i.e.:
$$
\left(|r|^p+|s|^p\frac{v_{k+1}^p}{v_k^p}\right)^{\frac{1}{p}}=\frac{\|B(r,s)e_k\|_{p,v}}{\|e_k\|_{p,v}} \leq \|B(r, s )\|\leq \left(|r|+N|s|\right)
$$
 for every $k\in\N_0$. It  follows that
 \[
 \left(|r|^p+N|s|^p\right)^{\frac{1}{p}}\leq \|B(r, s )\|\leq \left(|r|+N|s|\right).
 \] 
\end{rem}

 We now pass to describe the spectrum of the operator $B(r,s)$ when it acts on $l_p(v)$. In order to do this, we need the following result.
 
 \begin{lem}\label{L.lemma} Let $1\leq q\leq \infty$ and $v$ be a weight sequence. The multiplication operator $T_v\colon l_q(v)\to l_q$ defined by $T_v((x_n)_{n\in\N_0}):=(x_nv_n)_{n\in\N_0}$, for $x=(x_n)_{n\in\N_0}\in l_q(v)$, is an isometry onto with inverse $T_v^{-1}\colon l_q\to l_q(v)$, given by $T_v^{-1}((y_n)_{n\in\N_0}):=(\frac{y_n}{v_n})_{n\in\N_0}$, for $y=(y_n)_{n\in\N_0}\in l_q$.
 \end{lem}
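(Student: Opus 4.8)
The plan is to verify the stated properties directly from the definition of the weighted norm, since this is a bookkeeping statement with no real analytic content. First I would check that $T_v$ is a well-defined linear map from $l_q(v)$ into $l_q$: if $x=(x_n)_{n\in\N_0}\in l_q(v)$, then by the very definition of $\|\cdot\|_{q,v}$ the sequence $(x_nv_n)_{n\in\N_0}$ belongs to $l_q$, so $T_vx\in l_q$; linearity is immediate because $v$ is a fixed sequence and multiplication is coordinatewise.

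Next I would observe that $T_v$ is isometric: for every $x\in l_q(v)$,
\[
\|T_vx\|_q=\|(x_nv_n)_{n\in\N_0}\|_q=\|x\|_{q,v},
\]
again straight from the definition. In particular $T_v$ is injective and continuous.

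Then I would prove surjectivity by exhibiting the candidate inverse. Given $y=(y_n)_{n\in\N_0}\in l_q$, positivity of the weight (each $v_n>0$, which is part of the standing definition of a weight sequence) makes $x:=(y_n/v_n)_{n\in\N_0}$ well defined, and $\|x\|_{q,v}=\|(y_n)_{n\in\N_0}\|_q=\|y\|_q<\infty$, so $x\in l_q(v)$ and $T_vx=y$. This shows at once that $T_v$ is onto and that the map $S\colon l_q\to l_q(v)$, $Sy:=(y_n/v_n)_{n\in\N_0}$, satisfies $T_v\circ S=\mathrm{id}_{l_q}$ and $S\circ T_v=\mathrm{id}_{l_q(v)}$ (both identities checked coordinatewise). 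Hence $S=T_v^{-1}$, which is therefore also a linear isometric bijection, completing the proof.

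There is no genuine obstacle here: every step is a one-line consequence of the definition of $\|\cdot\|_{q,v}$ together with the positivity of $v$, and the argument is uniform in $1\le q\le\infty$, so the endpoint case $q=\infty$ requires no separate treatment.
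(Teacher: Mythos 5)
Your verification is correct, and it is exactly the routine argument the paper intends when it dismisses the proof as straightforward: isometry and bijectivity follow directly from the definition of $\|\cdot\|_{q,v}$ and the positivity of the weight, uniformly in $q$. Nothing to add.
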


\begin{proof}The proof is straightforward. \end{proof}

 \begin{thm}\label{charac spect}
 Let $1<p<\infty$ and  $v$ be a weight sequence such that  $\{\frac{v_{n+1}}{v_{n}}\}_{n\in\N_0}\in l_\infty$. For each  $r,s\in\R$ and $r,s\neq0$, we have that 
 \begin{equation}\label{inc1}
 \sigma(B(r , s), l_p(v))=\left\{\alpha\in\C\colon |r-\alpha|\leq L|s|\right\}=\overline{\D}_{L |s|}(r),
 \end{equation}
where $L:=\max\lim_{n\to\infty}\frac{v_{n+1}}{v_{n}}$. 
 \end{thm}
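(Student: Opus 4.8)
The plan is to reduce the spectral computation for $B(r,s)$ on $l_p(v)$ to a perturbation/comparison of the known spectrum on $l_p$ (Theorem \ref{spbrs}(1)), exploiting that $L = \max\lim_{n\to\infty} v_{n+1}/v_n$ controls the ``asymptotic weight ratio'' and hence the norm of $B(r,s) - r I$ on tail subspaces. First I would establish the inclusion $\sigma(B(r,s), l_p(v)) \subseteq \overline{\D}_{L|s|}(r)$: given $\alpha$ with $|r - \alpha| > L|s|$, I want to show $(B(r,s) - \alpha I)$ is invertible in $\cL(l_p(v))$. Write $B(r,s) - \alpha I = (r-\alpha)I + sS$, where $S$ is the right-shift (so $B(r,s) = rI + sS$). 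Inverting $(r-\alpha)I + sS$ formally gives the Neumann series $\frac{1}{r-\alpha}\sum_{k=0}^\infty \left(\frac{-s}{r-\alpha}\right)^k S^k$; the issue is that $S$ need not have norm $\leq L$ on all of $l_p(v)$ (only $\|S e_n\|_{p,v}/\|e_n\|_{p,v} = v_{n+1}/v_n$, whose sup $N$ may exceed $L$). The standard fix: split $l_p(v)$ as a finite-dimensional block $F_m = \mathrm{span}\{e_0,\dots,e_{m-1}\}$ plus the tail $l_p(v)_{\geq m}$; on the tail, $\|S\| \leq \sup_{n\geq m} v_{n+1}/v_n$, which tends to $L$ as $m\to\infty$ (using $\limsup$ of the ratios, after discarding the finitely many large early ratios — here one must be careful that $L$ is defined via $\max\lim$, presumably $\limsup$, so choose $m$ so that $\sup_{n\geq m} v_{n+1}/v_n < |r-\alpha|/|s|$). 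On the tail the Neumann series converges, giving invertibility of $B(r,s) - \alpha I$ restricted to the tail (up to a compact/finite-rank correction coming from the coupling between block and tail), and a finite-dimensional linear-algebra argument handles the block; since $\sigma_p(B(r,s), l_p(v)) = \emptyset$ can be checked directly (the recursion $s x_{n-1} + r x_n = \alpha x_n$ forces $x_0 = 0$ hence $x \equiv 0$ when $s \neq 0$), no eigenvalue obstructs invertibility on the block either, so $\alpha \in \rho(B(r,s), l_p(v))$.

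For the reverse inclusion $\overline{\D}_{L|s|}(r) \subseteq \sigma(B(r,s), l_p(v))$, I would show that every $\alpha$ with $|r-\alpha| \leq L|s|$ fails to have a bounded inverse. The cleanest route is via approximate eigenvectors: pick a subsequence $n_k \to \infty$ along which $v_{n_k+1}/v_{n_k} \to L$, and build normalized ``wave packets'' supported near indices $n_k$ on which $B(r,s) - \alpha I$ acts with norm tending to $0$ — concretely, try truncated geometric sequences $x^{(k)}_j = c_k \left(\frac{\alpha - r}{s}\right)^{j - n_k}$ for $j$ in a long window below $n_k$, mimicking the formal eigenfunction of the shift; the weighted norm of $(B(r,s) - \alpha I)x^{(k)}$ is then controlled by boundary terms and by how closely the local weight ratios approximate $L$ over the window. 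This shows $\alpha$ is in the approximate point spectrum, hence in $\sigma(B(r,s), l_p(v))$; since the spectrum is closed, the closed disc is contained in it. Alternatively — and this is likely the slicker packaging the authors use — one can pass to the adjoint: $B(r,s)^*$ acts on $l_p(v)^* = l_{p'}(v^{-1})$ (or on $l_{p'}(\tilde v)$ for a suitable dual weight), where it becomes (a weighted conjugate of) $B(s, r)$ or a left-shift-type operator whose point spectrum one computes explicitly to be the open disc $\D_{L|s|}(r)$, giving $\D_{L|s|}(r) \subseteq \sigma_r(B(r,s), l_p(v))$ via the duality relations recalled in Section 2, and then closedness of the spectrum plus the already-proven upper bound forces equality.

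I expect the main obstacle to be the reverse inclusion, specifically the careful handling of the gap between the pointwise ratios $v_{n+1}/v_n$ and their $\limsup$ $L$: the operator $S$ on $l_p(v)$ can genuinely have larger norm than $L$, so the boundary of the spectrum is pinned down by the \emph{asymptotic} behaviour of the weights, and making the approximate-eigenvector (or adjoint point-spectrum) argument produce exactly the radius $L|s|$ — no more, no less — requires exploiting that one may work arbitrarily far out in the tail where the ratios are within $\varepsilon$ of $L$, while simultaneously keeping the window long enough that the truncation error in the geometric test sequence stays small. The compatibility of these two requirements (long window, uniformly near-$L$ ratios, $p$-summability of the tail) is where the real work lies; everything else (continuity from Proposition \ref{contbrs}, emptiness of the point spectrum, the finite-block bookkeeping, and closedness of $\sigma$) is routine. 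I would also, as a sanity check, verify that when $v \equiv 1$ one has $L = 1$ and the formula collapses to Theorem \ref{spbrs}(1), and note that Lemma \ref{L.lemma} lets one transfer everything isometrically to the unweighted space $l_p$ conjugated by the diagonal operator $T_v$, turning $B(r,s)$ into the non-constant-coefficient operator $(x_n) \mapsto (s \frac{v_n}{v_{n-1}} x_{n-1} + r x_n)$, which is perhaps the most transparent model for running all of the above estimates.
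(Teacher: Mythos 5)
Your upper bound $\sigma(B(r,s),l_p(v))\subseteq\overline{\D}_{L|s|}(r)$ is essentially sound, but it is a different route from the paper's. You split $l_p(v)=F_m\oplus\overline{{\rm span}}\{e_n\colon n\geq m\}$, choose $m$ with $\sup_{n\geq m}v_{n+1}/v_n<|r-\alpha|/|s|$, run the Neumann series on the $B(r,s)$-invariant tail, and observe that the operator is block lower-triangular with respect to this splitting, the finite block being an $m\times m$ lower-triangular matrix with diagonal $r-\alpha\neq 0$ (so no appeal to $\sigma_p(B(r,s),l_p(v))=\emptyset$ or to compact perturbations is actually needed; the coupling is rank one and sits in the lower corner). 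The paper instead writes down the explicit lower-triangular inverse of $B(r,s)-\alpha I$, checks its boundedness on $l_1(v)$ and on $l_\infty(v)$ via the column/row-sum criteria, and interpolates (Taylor, Theorem 4.52-B) to get boundedness on $l_p(v)$ for $1<p<\infty$. Both work; yours avoids interpolation, the paper's produces the resolvent explicitly, which it then reuses.

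The genuine gap is the reverse inclusion, and neither of your two routes closes it. The duality route rests on a miscalculation: the candidate eigenvectors of $B^*(r,s)$ on $l_p(v)^*=l_{p'}(\frac1v)$ are the geometric sequences $\left(\left(\frac{\alpha-r}{s}\right)^n\right)_{n\in\N_0}$, and their membership in $l_{p'}(\frac1v)$ is governed by $L_1=\min\lim_{n\to\infty}v_{n+1}/v_n$, not by $L$; this is exactly Theorem \ref{charac spect p}, which places $\sigma_p(B^*(r,s),l_p(v)^*)$ between the open and the closed disc of radius $L_1|s|$. So duality only yields $\D_{L_1|s|}(r)\subseteq\sigma(B(r,s),l_p(v))$, which falls short whenever $L_1<L$. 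The approximate-eigenvector route founders on the very point you flag and defer: a wave packet of length $m$ nearly annihilated by $B(r,s)-\alpha I$ with $|\alpha-r|$ close to $L|s|$ needs the ratios $v_{n+1}/v_n$ to be close to $L$ along a whole window of length $m$, whereas $L=\max\lim_{n\to\infty}v_{n+1}/v_n$ only guarantees this at isolated indices; the radius actually reachable by approximate eigenvectors of the underlying shift is controlled by $\lim_{n\to\infty}\left(\sup_k v_{k+n}/v_k\right)^{1/n}$, which can be strictly smaller than $L$ (think of ratios equal to $2$ at $n=2^j$ and $1$ elsewhere). So the "real work" you postpone cannot be carried out as sketched. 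The paper's lower bound is of a different, non-constructive nature: it first checks $r\in\sigma(B(r,s),l_p(v))$ directly ($B(0,s)$ is injective but not surjective), and then, for $\alpha\neq r$ in the resolvent set, applies the bounded inverse to $e_0$; the first column of the explicit inverse is $\frac{1}{r-\alpha}\left(\left(\frac{-s}{r-\alpha}\right)^n\right)_{n\in\N_0}$, so $\sum_{n}|s/(r-\alpha)|^{np}v_n^p<\infty$, and from this summability constraint the bound $|s/(r-\alpha)|\,L\leq 1$ is extracted, giving $\D_{L|s|}(r)\subseteq\sigma(B(r,s),l_p(v))$ and then \eqref{inc1} by closedness of the spectrum. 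This necessity argument via the first column of the resolvent is the idea missing from your proposal, and it is precisely where the asymptotic quantity $L$ enters.
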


\begin{proof}
We first prove that $(B(r,s)-\alpha I)^{-1}$ exists and belongs to $\cL(l_1(v))$ for $\alpha\notin \overline{\D}_{L |s|}(r)$. If this is the case, since $s\neq 0$ we have $\alpha \neq r$ and so $B(r,s)-\alpha I $ is triangle; hence, $(B(r,s)-\alpha I)^{-1}$ exists. In view of Lemma \ref{L.lemma}, we have that the operator $(B(r,s)-\alpha I)^{-1}\in \cL(l_1(v))$ if, and only if, the operator $T_v\circ (B(r,s)-\alpha I)^{-1}\circ T_v^{-1}\in \cL(l_1)$. In particular, the operator $T_v\circ (B(r,s)-\alpha I)^{-1}\circ T_v^{-1}$
is represented by the following matrix
$$
\left[\begin{matrix}\frac{1}{r-\alpha}&0&\dots &\dots \\ \frac{-s}{(r-\alpha)^2}\frac{v_1}{v_0}&\frac{1}{r-\alpha}&0&\dots \\ \frac{s^2}{(r-\alpha)^3}\frac{v_2}{v_0}&\frac{-s}{(r-\alpha)^2}\frac{v_2}{v_1}&\frac{1}{r-\alpha}&\dots \\ \vdots&\vdots &\vdots&\ddots \end{matrix}\right],
$$
i.e.,  the $n$-th row turns out to be
$$\frac{(-s)^{n-k}}{(r-\alpha)^{n-k+1}}\frac{v_n}{v_k}$$
in the $k$-th place for $k \leq n$ and zero otherwise. Thus, $T_v\circ (B(r,s)-\alpha I)^{-1}\circ T_v^{-1}\in \cL(l_1)$ if, and only if, $\sup_{k\in\N_0}\sum_{n=k}^\infty\left|\frac{s}{r-\alpha}\right|^{n-k}\frac{1}{|r-\alpha|}\frac{v_n}{v_k}<\infty$ (cf. \cite[p. 220]{Taylor}). So, 
 we first observe that
\begin{equation}\label{eq.D1}
 \sup_{k\in\N_0} \sum_{n=k}^\infty \left| \frac{s}{r-\alpha}\right| ^{n-k}  \frac{1}{|r-\alpha|}\frac{v_n}{v_k} 
=  \frac{1}{|r-\alpha|} \sup_{k\in\N_0} \sum_{l=0}^\infty \left| \frac{s}{r-\alpha}\right| ^{l} \frac{v_{l+k}}{v_k}.
\end{equation}
Since $|r-\alpha|>L|s|$ (i.e., $\frac{|r-\alpha|}{|s|}>L$), with $L=\max\lim_{n\to\infty}\frac{v_{n+1}}{v_n}$, there exists $n_0\in\N_0$ such that   $\frac{|r-\alpha|}{|s|}>\sup_{n\geq n_0}\frac{v_{n+1}}{v_n}$. Accordingly, there exists $a\in\R$ such that 
$\frac{|r-\alpha|}{|s|}>a>\sup_{n\geq n_0}\frac{v_{n+1}}{v_n}$, from which follows that $v_{n+1}<a v_n$ for every $n\geq n_0$. This fact implies for each $k\in\N_0$ with $k\geq n_0$  that
\[
\sum_{l=0}^\infty \left| \frac{s}{r-\alpha}\right| ^{l} \frac{v_{l+k}}{v_k}\leq \sum_{l=0}^\infty \left| \frac{s}{r-\alpha}\right| ^{l}a^l<\infty,
\]
because  $\left| \frac{sa}{r-\alpha}\right| <1$  (note that from  $v_{n+1}< av_n$, for each $n\geq n_0$, it easily follows that  $v_{l+k}\leq a^lv_{k}$ for each $k,l\in\N_0$ with $k\geq n_0$). On the other hand, if $k\in \{0,\ldots, n_0-1\}$ then
\begin{align*}
&\quad \sum_{l=0}^\infty \left| \frac{s}{r-\alpha}\right| ^{l} \frac{v_{l+k}}{v_k}=\sum_{l=0}^{n_0-1}\left| \frac{s}{r-\alpha}\right| ^{l} \frac{v_{l+k}}{v_k}+\sum_{l=n_0}^\infty\left| \frac{s}{r-\alpha}\right| ^{l} \frac{v_{l+k}}{v_{n_{0}}}\frac{v_{n_{0}}}{v_k}\\
&\leq \sup_{k=0,\ldots, n_0-1}\sum_{l=0}^{n_0-1} \left| \frac{s}{r-\alpha}\right| ^{l}\frac{v_{l+k}}{v_k}+\left(\sup_{k=0,\ldots,n_0-1}\frac{v_{n_0}a^{k-n_0}}{v_k}\right)\sum_{l=n_0}^\infty\left| \frac{s}{r-\alpha}\right| ^{l}a^l=:C<\infty.
\end{align*}
Therefore, we can conclude that the supremum in 
 \eqref{eq.D1} is finite.
This shows that $(B(r,s)-\alpha I)^{-1}\in \cL(l_1(v))$. 

In view of Lemma \ref{L.lemma}, we can argue in a similar way to prove  that $(B(r,s)-\alpha I)^{-1}\in \cL(l_\infty(v))$ if, and only if, $\sup_{n\in\N_0}\sum_{k=0}^n\left|\frac{s}{r-\alpha}\right|^{n-k}\frac{1}{|r-\alpha|}\frac{v_n}{v_k}<\infty$ (cf. \cite[p. 220]{Taylor}). So, we  observe that 
\begin{equation}\label{eq.D2}
\sup_{n\in\N_0}\sum_{k=0}^n\left|\frac{s}{r-\alpha}\right|^{n-k}\frac{1}{|r-\alpha|}\frac{v_n}{v_k}= \frac{1}{|r-\alpha|}\sup_{n\in\N_0}\sum_{l=0}^n\left|\frac{s}{r-\alpha}\right|^{l}\frac{v_{n}}{v_{n-l}}.
\end{equation}
Since $\frac{|r-\alpha|}{|s|}>a>\sup_{n\geq n_0}\frac{v_{n+1}}{v_n}$, we have for each $n\geq n_0$ that
\begin{align*}
&\sum_{l=0}^n\left|\frac{s}{r-\alpha}\right|^{l}\frac{v_{n}}{v_{n-l}}=\sum_{l=0}^{n-n_0}\left|\frac{s}{r-\alpha}\right|^{l}\frac{v_{n}}{v_{n-l}}+\sum_{l=n-n_0+1}^n\left|\frac{s}{r-\alpha}\right|^{l}\frac{v_{n}}{v_{n_0}}\frac{v_{n_0}}{v_{n-l}}\\
&\quad \leq \sum_{l=0}^{n-n_0}\left|\frac{s}{r-\alpha}\right|^{l}a^l+\sum_{l=n-n_0+1}^n\left|\frac{s}{r-\alpha}\right|^{l}a^{l}a^{n-l-n_0}\frac{v_{n_0}}{v_{n-l}}\\
&\quad \leq \left(1+\sup_{0\leq j\leq n_0-1}\frac{v_{n_0}}{v_j}a^{j-n_0} \right)\sum_{l=0}^{\infty}\left|\frac{s}{r-\alpha}\right|^{l}a^l<\infty,
\end{align*}
because $\frac{|sa|}{|r-\alpha|}<1$ (note that if $0\leq l\leq n-n_0$ then $n-l\geq n_0$ and that if $n-n_0+1\leq l\leq n$, then $0\leq n-l\leq n_0-1$ and hence $-n_0\leq n-l-n_0\leq -1$). On the other hand, it is clear that 
\[
\sup_{0\leq n\leq n_0-1}\sum_{l=0}^n\left|\frac{s}{r-\alpha}\right|^{l}\frac{v_{n}}{v_{n-l}}<\infty.
\]
Therefore, we can conclude that 	the supremum in \eqref{eq.D2} is also finite and hence $(B(r,s)-\alpha I)^{-1}\in \cL(l_\infty(v))$.

Now, by \cite[Theorem 4.52-B, p. 224]{Taylor} combined with Lemma \ref{L.lemma}, we can conclude that $(B(r,s)-\alpha I)^{-1}\in \cL(l_p(v))$ as $1<p<\infty$. Thus, $\alpha\notin \sigma(B(r , s), l_p(v))$. This shows that an inclusion in \eqref{inc1} is satisfied.

To show the other inclusion in \eqref{inc1}, we first observe that if $\alpha =r$, then the operator $B(r,s)-\alpha I$ coincides with the operator $B(0,s)$. Since $B(0,s)x=0$ clearly implies $x=0$, we have that $B(0,s) \colon l_p(v)\to l_p(v)$ is injective. But, $B(0,s) \colon l_p(v)\to l_p(v)$  is  not surjective because $B(s,0)(l_p(v))\subseteq \{y=(y_n)_{n\in\N_0}\in l_p(v):\ y_0=0\}$. Hence, $B(0,s)$  is not invertible. So, $r\in \sigma(B(r,s),l_p(v))$.

Next, let $\alpha\not\in  \sigma(B(r,s),l_p(v))$ (hence,  $\alpha\not=r$). Then there exists $(B(r,s)-\alpha I)^{-1}\in \cL(l_p(v))$. So, it follows  that $(B(r,s)-\alpha I)^{-1}(e_0)\in l_p(v)$ and hence, that
\[
\frac{1}{|r-\alpha|^p}\sum_{n=0}^\infty\left|\frac{s}{r-\alpha}\right|^{np}v_n^p<\infty.
\]
The convergence of the series above implies that $\max\lim_{n\to\infty}\left|\frac{s}{r-\alpha}\right|^p\left(\frac{v_{n+1}}{v_n}\right)^p\leq 1$ or, equivalently, that $\max\lim_{n\to\infty}\left|\frac{s}{r-\alpha}\right|\frac{v_{n+1}}{v_n}\leq 1$, i.e., $\left|\frac{s}{r-\alpha}\right|\max\lim_{n\to\infty}\frac{v_{n+1}}{v_n}=\left|\frac{s}{r-\alpha}\right|L\leq 1$. Accordingly, $\alpha\not\in \D_{L |s|}(r)$. Since  $\alpha\not\in  \sigma(B(r,s),l_p(v))$ is arbitrary, we conclude that $\rho(B(r,s),l_p(v))\subseteq \D^c_{L |s|}(r)$ and hence that $ \D_{L |s|}(r)\subseteq  \sigma(B(r,s),l_p(v))$. Since $ \sigma(B(r,s),l_p(v))$ is a closed set in $\C$, it follows that $\overline{\D}_{L |s|}(r)\subseteq  \sigma(B(r,s),l_p(v))$. 
This completes the proof.
\end{proof}

\begin{thm}\label{charac spect p}
 Let $1<p<\infty$ and $v$ be a weight sequence such that  $\{\frac{v_{n+1}}{v_{n}}\}_{n\in\N_0}\in l_\infty$. For each $r,s\in\R$ and $r,s\neq 0$, we have that $\sigma_p(B(r , s), l_p(v))=\emptyset$. Moreover, 
 \begin{equation}\label{inc3}
\left\{\alpha\in\C\colon |r-\alpha|< L_1 |s|\right\}\subseteq \sigma_p(B^*(r , s), l^*_p(v)) \subseteq \left\{\alpha\in\C\colon |r-\alpha|\leq L_1 |s|\right\},
 \end{equation}
where $L_1:=\min\lim_{n\to\infty}\frac{v_{n+1}}{v_n}$.
 \end{thm}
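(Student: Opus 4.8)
The plan is to treat the two assertions separately: the emptiness of the point spectrum of $B(r,s)$ is a short recursion argument, and the description of $\sigma_p(B^*(r,s),l^*_p(v))$ reduces to deciding when a single geometric series converges — the fact that \eqref{inc3} is an inclusion chain rather than an equality already signals where the difficulty will be.

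For $\sigma_p(B(r,s),l_p(v))=\emptyset$ I would argue exactly as in the injectivity step of the proof of Theorem \ref{charac spect}. If $(B(r,s)-\alpha I)x=0$ for some $x\in l_p(v)$, the $0$-th coordinate of $sx_{n-1}+rx_n=\alpha x_n$ (recall $x_{-1}=0$) gives $(r-\alpha)x_0=0$: if $\alpha\neq r$ this forces $x_0=0$ and then inductively $x_n=-\tfrac{s}{r-\alpha}\,x_{n-1}=0$ for every $n$; if $\alpha=r$ the equation reads $sx_{n-1}=0$, so again $x_n=0$ for all $n$ since $s\neq0$. Hence $\Ker(B(r,s)-\alpha I)=\{0\}$ for every $\alpha\in\C$.

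For the adjoint I would first make $l^*_p(v)$ explicit. Using Lemma \ref{L.lemma} (equivalently, a direct duality computation through the isometry $T_v$), one identifies $l^*_p(v)$ with $l_q(1/v)$, $\tfrac{1}{p}+\tfrac{1}{q}=1$, under the pairing $\langle x,y\rangle=\sum_n x_ny_n$; transposing $B(r,s)$ then gives $(B^*(r,s)y)_n=ry_n+sy_{n+1}$ for $y=(y_n)_n\in l_q(1/v)$. Consequently $\alpha\in\sigma_p(B^*(r,s),l^*_p(v))$ if and only if the recursion $sy_{n+1}=(\alpha-r)y_n$ admits a non-zero solution in $l_q(1/v)$; since $s\neq0$ the only solutions are the geometric sequences $y_n=y_0\big(\tfrac{\alpha-r}{s}\big)^n$, so — writing $t:=\tfrac{|r-\alpha|}{|s|}$ — the condition becomes
\[
\sum_{n=0}^{\infty}\Big(\frac{t^{n}}{v_n}\Big)^{q}<\infty .
\]
For $\alpha=r$ this series equals $v_0^{-q}<\infty$, so $r\in\sigma_p(B^*(r,s),l^*_p(v))$, in agreement with $0=|r-r|<L_1|s|$.

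It remains to read the two inclusions in \eqref{inc3} off this series, and this is the only real work. For $\{|r-\alpha|<L_1|s|\}\subseteq\sigma_p(B^*(r,s),l^*_p(v))$: if $t<L_1=\min\lim_{n\to\infty}\tfrac{v_{n+1}}{v_n}$, pick $\beta$ with $t<\beta<L_1$; then $v_{n+1}\ge\beta v_n$ for all $n$ beyond some $N$, so $v_n\ge c\,\beta^{n}$ for a suitable $c>0$ and all large $n$, hence $t^n/v_n\le c^{-1}(t/\beta)^n$ and the series converges because $t/\beta<1$. For the reverse inclusion $\sigma_p(B^*(r,s),l^*_p(v))\subseteq\{|r-\alpha|\le L_1|s|\}$: convergence of the series forces its terms $(t^n/v_n)^q$ not to grow geometrically, and the necessary condition extracted from this — either $\limsup_n (t^n/v_n)^{q/n}\le1$ via the root test, or the control of the $\liminf$/$\limsup$ of the consecutive ratios $(t\,v_n/v_{n+1})^q$ — yields, after cancelling the powers of $t$, an inequality of the form $t\le L_1$. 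The delicate point — and the main obstacle — is precisely to make this last estimate sharp: the bound coming straight out of summability naturally involves the growth exponent $\liminf_n v_n^{1/n}$, and one must use the structure of $v$ to see that this coincides with $L_1=\min\lim_{n\to\infty}\tfrac{v_{n+1}}{v_n}$ (this is immediate when $\lim_{n\to\infty}v_{n+1}/v_n$ exists, which is the relevant case for the weights attached to the power series spaces $\Lambda_\infty(\alpha)$ in Section 4). Finally, the circle $|r-\alpha|=L_1|s|$ may or may not meet $\sigma_p(B^*(r,s),l^*_p(v))$, depending on finer behaviour of $v$, which is what forces \eqref{inc3} to be a two-sided inclusion rather than an equality.
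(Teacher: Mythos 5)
Up to the last step your proposal follows the paper's argument exactly: the recursion giving $\sigma_p(B(r,s),l_p(v))=\emptyset$, the identification $l_p^*(v)=l_{p'}(1/v)$ with $(B^*(r,s)y)_n=ry_n+sy_{n+1}$, the geometric eigenvectors $y_n=\big(\frac{\alpha-r}{s}\big)^ny_0$, and the proof of $\D_{L_1|s|}(r)\subseteq\sigma_p(B^*(r,s),l_p^*(v))$ by choosing $t<\beta<L_1$ and bounding $v_n\geq c\,\beta^n$ are all as in the paper. The genuine gap is the second inclusion in \eqref{inc3}: you never derive $t\leq L_1$ from the convergence of $\sum_n (t^n/v_n)^{p'}$; you record that summability yields $t\leq\liminf_n v_n^{1/n}$ and then defer to ``the structure of $v$'', noting the identification $\liminf_n v_n^{1/n}=L_1$ only when $\lim_n v_{n+1}/v_n$ exists. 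Under the stated hypothesis (bounded ratios only) this identification can fail, and then the step cannot be completed from summability at all: if $v_{n+1}/v_n$ alternates between $1$ and $3$, then $L_1=1$ while $v_n^{1/n}\to\sqrt{3}$, and for $1<t<\sqrt{3}$ the geometric eigenvector does lie in $l_{p'}(1/v)$, so the eigenvalue region is governed by $\liminf_n v_n^{1/n}$, not by $L_1$. So, as written, your argument proves the upper inclusion only with $\liminf_n v_n^{1/n}$ in place of $L_1$, i.e.\ it does not prove the statement for an arbitrary bounded-ratio weight.

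For comparison, the paper closes this step in one line by asserting that convergence of the series forces $\limsup_n \left|\frac{\alpha-r}{s}\right|^{p'}\left(\frac{v_n}{v_{n+1}}\right)^{p'}\leq 1$, i.e.\ $t\,\limsup_n\frac{v_n}{v_{n+1}}\leq 1$, which is exactly $t\leq L_1$; this is a converse-ratio-test assertion, and your hesitation about it is mathematically well founded, since convergence of a positive series only controls the $\liminf$ of the consecutive-term ratios (giving $t\leq L$), not their $\limsup$. The conclusion does hold, and your root-test bound already suffices, in the situations the paper actually uses later (Example \ref{E.2} and Section 4, where $\lim_n v_{n+1}/v_n$ exists, so $L_1=\liminf_n v_n^{1/n}$). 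To make your proof complete you must either work under such an additional assumption (existence of the ratio limit, or $\liminf_n v_n^{1/n}=L_1$) or state the upper inclusion with $\liminf_n v_n^{1/n}$ instead of $L_1$; merely invoking ``the structure of $v$'' leaves the key estimate unproved, and this is precisely the point where the paper's own justification is the thinnest.
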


 \begin{proof} The fact that the point spectrum $\sigma_p(B(r , s), l_p(v))$ is empty follows by arguing as in \cite[Theorem 2.4]{BF2}.
 	
To prove the validity of the inclusions in \eqref{inc3},  we first observe that $l^*_p(v)=l_{p'}(\frac{1}{v})$, where ${p'}$ is the conjugate exponent of $p$, and that $B^*(r,s)((x_n)_{n\in\N_0})=\left(rx_n+sx_{n+1}\right)_{n\in\N_0}$, for $(x_n)_{n\in\N_0}\in l^*_p(v)$.

Now, we recall from \cite[Theorem 2.5]{BF2} that the unique solution of the equation $B^*(r,s)x=\alpha x$, with $\alpha\in\C$ and $ x\not=0 $, is given by  
\begin{equation}\label{eq.aut}
x_n:=\left(\frac{\alpha-r}{s}\right)^n x_0
\end{equation}
 for each $n\in\N_0$ and some $x_0\in\C$, $x_0\neq0$. 
 
 Now,  if $\alpha \in \D_{L_1 |s|}(r)$,  then $|r-\alpha|<L_1|s|$, i.e.,  $\frac{|r-\alpha|}{|s|}<L_1$, with $L_1=\min\lim_{n\to\infty}\frac{v_{n+1}}{v_n}$. Therefore, there exists $n_0\in\N_0$ such that  $\frac{|r-\alpha|}{|s|}<\inf_{n\geq n_0}\frac{v_{n+1}}{v_n}$. It follows that there exists $b>0$ such that $\frac{|r-\alpha|}{|s|}<b<\frac{v_{n+1}}{v_n}$ for every $n\geq n_0$. This fact implies that  
 the element $x=(x_n)_{n\in\N_0}$, with each $x_n$ defined as in \eqref{eq.aut}, belongs to  $l_{p'}(\frac{1}{v})$. Indeed, we have
\begin{align*}\label{eq.D3}
&\sum_{n=0}^\infty \left| \frac{\alpha-r}{s}\right| ^{np'}\frac{|x_0|^{p'}}{ v_n^{p'}}=\sum_{n=0}^{n_0-1}\left| \frac{\alpha-r}{s}\right| ^{np'}\frac{|x_0|^{p'}}{ v_n^{p'}}+\sum_{n=n_0}^\infty \left| \frac{\alpha-r}{s}\right| ^{np'}\frac{|x_0|^{p'}}{ v_n^{p'}}\\
& \leq\sum_{n=0}^{n_0-1}\left| \frac{\alpha-r}{s}\right| ^{np'}\frac{|x_0|^{p'}}{ v_n^{p'}}+\frac{b^{n_0}}{v_{n_0}}\sum_{n=n_0}^\infty \left| \frac{\alpha-r}{sb}\right| ^{np'}<\infty,
\end{align*}
because $\alpha$ satisfies the condition $\left| \frac{\alpha-r}{sb}\right| < 1$, that implies $\left| \frac{\alpha-r}{sb}\right| ^{p'}<1$ (note that $b<\frac{v_{n+1}}{v_n}$ for $n\geq n_0$ implies that $\frac{1}{v_{n+1}}<\frac{1}{b}\frac{1}{v_n}$ for $n\geq n_0$). By the arbitrariness of $\alpha$, we conclude that $\D_{L_1|s|}(r)\subseteq \sigma_p(B^*(r , s), l^*_p(v))$. 

On the other hand, if the previous series converges, then $\max\lim_{n\to\infty}\left| \frac{\alpha-r}{s}\right| ^{p'}\left(\frac{v_n}{ v_{n+1}}\right)^{p'}\leq 1$ or, equivalently, $\max\lim_{n\to\infty}\left| \frac{\alpha-r}{s}\right| \frac{v_n}{ v_{n+1}}\leq 1$, i.e., $\left|\frac{\alpha-r}{s}\right|\max\lim_{n\to\infty} \frac{v_n}{ v_{n+1}}\leq 1$. But, $\max\lim_{n\to\infty} \frac{v_n}{ v_{n+1}}=\frac{1}{\min\lim_{n\to\infty}\frac{v_{n+1}}{v_n}}=\frac{1}{L_1}$. So, we get that $|r-\alpha|\leq L_1|s|$, i.e., $\alpha\in \overline{\D}_{L_1|s|}(r)$. The proof is now complete.
\end{proof}

\begin{rem}\label{oss1}
If $\alpha\in\sigma_p(B^*(r , s), l^*_p(v))$, then dim Ker$(B^*(r , s)-\alpha I)=1$. In particular, the proof of Theorem \ref{charac spect p} shows that $\alpha\in\sigma_p(B^*(r , s), l^*_p(v))$ if, and only if, $\left\{\left(\frac{\alpha-r}{s}\right)^n\right\}_{n\in\N_0}\in  l_{p'}(\frac{1}{v})$.
\end{rem}

We recall that if $T\in\cL(X)$, with $X$ a complex Banach space, then $\sigma_p(T,X)\subseteq \sigma_p(T^*,X^*)\cup \sigma_r(T^*,X^*)$. In particular, if $\sigma_p(T^*,X^*)=\emptyset$, then $\sigma_p(T,X)\subseteq  \sigma_r(T^*,X^*)$. On the other hand, we always have that $\sigma_r(T,X)\subseteq  \sigma_p(T^*,X^*)$. In view of these facts and Theorem \ref{charac spect p}, we easily  obtain the following description of $\sigma_r(B(r , s), l_p(v))$

\begin{thm}\label{charac spect r}
 Let $1<p<\infty$ and $v$ be a weight sequence such that $\{\frac{v_{n+1}}{v_{n}}\}_{n\in\N_0}\in l_\infty$. For each $r,s\in\R$ and $r,s\neq0$  the following inclusions are satisfied:
 \begin{equation*}
\left\{\alpha\in\C\colon |r-\alpha|< L_1 |s|\right\}\subseteq \sigma_r(B(r , s), l_p(v)) \subseteq \left\{\alpha\in\C\colon |r-\alpha|\leq L_1 |s|\right\},
 \end{equation*}
where $L_1:=\min\lim_{n\to\infty}\frac{v_{n+1}}{v_n}$. 
 In particular, $\sigma_r(B(r , s), l_p(v))=\sigma_p(B^*(r , s), l^*_p(v))$.
 \end{thm}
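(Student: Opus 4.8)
The strategy is to obtain both inclusions --- and in fact the stated identity $\sigma_r(B(r,s),l_p(v))=\sigma_p(B^*(r,s),l^*_p(v))$ --- directly from the general duality relations recalled above together with Theorem~\ref{charac spect p}. For the inclusion $\sigma_r(B(r,s),l_p(v))\subseteq\{\alpha\in\C\colon|r-\alpha|\leq L_1|s|\}$ one invokes the always-valid containment $\sigma_r(T,X)\subseteq\sigma_p(T^*,X^*)$ with $T=B(r,s)$ and $X=l_p(v)$, and then feeds in the right-hand inclusion of \eqref{inc3}.

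For the reverse inclusion $\{\alpha\in\C\colon|r-\alpha|<L_1|s|\}\subseteq\sigma_r(B(r,s),l_p(v))$ I would exploit the reflexivity of $l_p(v)$ for $1<p<\infty$. Under the canonical identifications $l^*_p(v)=l_{p'}(\frac{1}{v})$ and $l_{p'}(\frac{1}{v})^*=l_p(v)$, the operator $B(r,s)\in\cL(l_p(v))$ is the adjoint of $B^*(r,s)\in\cL(l_{p'}(\frac{1}{v}))$, the latter being automatically bounded as the adjoint of a bounded operator. Now apply the duality fact ``if $\sigma_p(S^*,Y^*)=\emptyset$, then $\sigma_p(S,Y)\subseteq\sigma_r(S^*,Y^*)$'' with $S:=B^*(r,s)$ and $Y:=l_{p'}(\frac{1}{v})$, so that $S^*=B(r,s)$ and $Y^*=l_p(v)$: since $\sigma_p(B(r,s),l_p(v))=\emptyset$ by Theorem~\ref{charac spect p}, this yields $\sigma_p(B^*(r,s),l^*_p(v))\subseteq\sigma_r(B(r,s),l_p(v))$. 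Combining this with the containment of the previous paragraph, we obtain the equality $\sigma_r(B(r,s),l_p(v))=\sigma_p(B^*(r,s),l^*_p(v))$, and the two displayed inclusions of the statement then follow verbatim from \eqref{inc3} of Theorem~\ref{charac spect p}.

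The argument presents no real obstacle; the only point deserving a word of care is the biduality bookkeeping, namely that $(B^*(r,s))^*$ is genuinely $B(r,s)$ under the canonical isomorphism $l_p(v)\cong l_p(v)^{**}$, which holds because that isomorphism intertwines $B(r,s)$ with its second adjoint. Alternatively, one could bypass reflexivity entirely by establishing directly that $\sigma_p(T^*,X^*)\subseteq\sigma_p(T,X)\cup\sigma_r(T,X)$ for every $T\in\cL(X)$, since $\alpha\in\sigma_p(T^*,X^*)$ means $\Ker(T^*-\alpha I)\neq\{0\}$ and hence $\overline{(T-\alpha I)(X)}\neq X$, so that $\alpha$ can lie neither in $\rho(T,X)$ nor in $\sigma_c(T,X)$; in the reflexive setting at hand, however, the route through the facts already recorded is the most economical.
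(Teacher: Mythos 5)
Your proof is correct and follows essentially the same route as the paper: the upper inclusion via the general fact $\sigma_r(T,X)\subseteq\sigma_p(T^*,X^*)$, the lower one via reflexivity of $l_p(v)$ (so that $B^{**}(r,s)=B(r,s)$) together with $\sigma_p(B(r,s),l_p(v))=\emptyset$, which forces $\sigma_p(B^*(r,s),l^*_p(v))\subseteq\sigma_r(B(r,s),l_p(v))$, and then both displayed inclusions from \eqref{inc3} of Theorem \ref{charac spect p}. Your closing observation that reflexivity could be bypassed by proving $\sigma_p(T^*,X^*)\subseteq\sigma_p(T,X)\cup\sigma_r(T,X)$ directly is also sound, but it is a side remark that does not change the substance of the argument.
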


 \begin{proof}
 Since $l_p(v)$ is a reflexive Banach space, we have   $B^{**}(r,s)=B(r,s)$. Therefore, in view of the fact that $\sigma_p(B(r,s),l_p(v))=\emptyset$, it follows that
 \begin{align*}
 \sigma_p(B^*(r , s), l^*_p(v))\subseteq
 \sigma_r(B(r , s), l_p(v)) .
 \end{align*}
So, applying Theorem \ref{charac spect p}, we get that
  \begin{align*}
 \left\{\alpha\in\C\colon |r-\alpha|< L_1 |s|\right\}&\subseteq\sigma_r(B(r , s), l_p(v))\subseteq \sigma_p(B^*(r , s), l^*_p(v)) \\&\subseteq \left\{\alpha\in\C\colon |r-\alpha|\leq L_1 |s|\right\}.
\end{align*}
In particular, we have that $\sigma_r(B(r , s), l_p(v))=\sigma_p(B^*(r , s), l^*_p(v))$.
\end{proof}

We now discuss the continuous spectrum. So, we recall that if $T\in\cL(X)$, with $X$ a complex Banach space, then $\sigma(T,X)= \sigma_p(T,X)\cup \sigma_r(T,X)\cup\sigma_c(T,X)$, with a disjoint union.

\begin{thm}\label{charac spect c}
 Let $1<p<\infty$ and $v$ be a weight sequence such that  $\{\frac{v_{n+1}}{v_{n}}\}_{n\in\N_0}\in l_\infty$. For each $r,s\in\R$ and $r,s\neq0$ the following inclusion is satisfied: 
 \begin{equation}\label{inc5}
 	\sigma(B(r,s),l_p(v))\setminus \overline{\D}_{L_1|s|}(r)
\subseteq \sigma_c(B(r , s), l_p(v)).
 \end{equation}
 \end{thm}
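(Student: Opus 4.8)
The plan is to exploit the general decomposition $\sigma(B(r,s),l_p(v))=\sigma_p(B(r,s),l_p(v))\cup\sigma_r(B(r,s),l_p(v))\cup\sigma_c(B(r,s),l_p(v))$ as a disjoint union, which I would invoke immediately from the remark preceding the statement. Since $\sigma_p(B(r,s),l_p(v))=\emptyset$ by Theorem \ref{charac spect p}, the spectrum reduces to the disjoint union $\sigma_r(B(r,s),l_p(v))\cup\sigma_c(B(r,s),l_p(v))$. Hence any $\alpha\in\sigma(B(r,s),l_p(v))$ that is \emph{not} in $\sigma_c(B(r,s),l_p(v))$ must lie in $\sigma_r(B(r,s),l_p(v))$, and by Theorem \ref{charac spect r} we have $\sigma_r(B(r,s),l_p(v))\subseteq\overline{\D}_{L_1|s|}(r)=\{\alpha\in\C\colon|r-\alpha|\le L_1|s|\}$.

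Putting these together: if $\alpha\in\sigma(B(r,s),l_p(v))\setminus\overline{\D}_{L_1|s|}(r)$, then $\alpha\notin\sigma_r(B(r,s),l_p(v))$ (because the residual spectrum is contained in $\overline{\D}_{L_1|s|}(r)$) and $\alpha\notin\sigma_p(B(r,s),l_p(v))$ (because the point spectrum is empty), so by the disjoint decomposition $\alpha\in\sigma_c(B(r,s),l_p(v))$. This gives exactly the inclusion \eqref{inc5}. So the proof is a short set-theoretic chase combining Theorems \ref{charac spect p} and \ref{charac spect r} with the general three-part decomposition of the spectrum of a bounded operator on a Banach space.

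There is essentially no analytic obstacle here; the only thing to be careful about is to state clearly which containments are being used and in which direction. Concretely I would write: let $\alpha\in\sigma(B(r,s),l_p(v))$ with $|r-\alpha|>L_1|s|$. By Theorem \ref{charac spect p}, $\sigma_p(B(r,s),l_p(v))=\emptyset$, so $\alpha\notin\sigma_p(B(r,s),l_p(v))$. By Theorem \ref{charac spect r}, $\sigma_r(B(r,s),l_p(v))\subseteq\{\beta\in\C\colon|r-\beta|\le L_1|s|\}$, and since $|r-\alpha|>L_1|s|$ we get $\alpha\notin\sigma_r(B(r,s),l_p(v))$. Because $\sigma(B(r,s),l_p(v))$ is the disjoint union of $\sigma_p$, $\sigma_r$ and $\sigma_c$, it follows that $\alpha\in\sigma_c(B(r,s),l_p(v))$, proving \eqref{inc5}. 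If one also wants the reverse inclusion it would follow from the disjointness together with Theorem \ref{charac spect r}, but since the statement only claims one inclusion, the above suffices.

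\begin{proof}
Recall that $\sigma(B(r,s),l_p(v))=\sigma_p(B(r,s),l_p(v))\cup\sigma_r(B(r,s),l_p(v))\cup\sigma_c(B(r,s),l_p(v))$, with a disjoint union. Let $\alpha\in\sigma(B(r,s),l_p(v))\setminus\overline{\D}_{L_1|s|}(r)$, so that $|r-\alpha|>L_1|s|$. By Theorem \ref{charac spect p}, $\sigma_p(B(r,s),l_p(v))=\emptyset$, hence $\alpha\notin\sigma_p(B(r,s),l_p(v))$. Moreover, by Theorem \ref{charac spect r} we have $\sigma_r(B(r,s),l_p(v))\subseteq\left\{\beta\in\C\colon|r-\beta|\le L_1|s|\right\}$; since $|r-\alpha|>L_1|s|$, it follows that $\alpha\notin\sigma_r(B(r,s),l_p(v))$. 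As $\alpha\in\sigma(B(r,s),l_p(v))$ and the above union is disjoint, we conclude $\alpha\in\sigma_c(B(r,s),l_p(v))$. This proves the inclusion \eqref{inc5}.
\end{proof}
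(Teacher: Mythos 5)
Your proposal is correct and follows exactly the paper's own argument: since $\sigma_p(B(r,s),l_p(v))=\emptyset$ (Theorem \ref{charac spect p}), the spectrum is the disjoint union of $\sigma_r$ and $\sigma_c$, and the containment $\sigma_r(B(r,s),l_p(v))\subseteq\overline{\D}_{L_1|s|}(r)$ from Theorem \ref{charac spect r} forces every point of the spectrum outside $\overline{\D}_{L_1|s|}(r)$ into $\sigma_c(B(r,s),l_p(v))$. Nothing further is needed.
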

 \begin{proof} Since $\sigma_p(B(r,s),l_p(v))=\emptyset$, we have that
  $$
  \sigma(B(r , s), l_p(v))=\sigma_c(B(r , s), l_p(v))\cup \sigma_r(B(r , s), l_p(v)),
  $$
   with a disjoint union. On the other hand, by Theorem  \ref{charac spect r}, the following inclusions 
    $$
    \left\{\alpha\in\C\colon |r-\alpha|<  L_1|s|\right\}\subseteq\sigma_r(B(r , s), l_p(v))\subseteq\left\{\alpha\in\C\colon |r-\alpha|\leq  L_1|s|\right\}
    $$ 
    are satisfied.
    Thus, it necessarily follows that 
    $$
   \sigma(B(r,s), l_p(v))\setminus \overline{\D}_{L_1|s|}(r)\subseteq\sigma_c(B(r , s), l_p(v)).
    $$
 This means that the inclusion \eqref{inc5} holds.
\end{proof}

\begin{rem} 
(1)	If $v=\mathbbm{1}$, then $L=L_1=1$. Therefore,  our results recover those contained in \cite{BF2}.

(2) If $v$ is a weight sequence satisfying  $\min\lim_{n\to\infty}\frac{v_{n+1}}{v_n}=0$ and $\max\lim_{n\to\infty}\frac{v_{n+1}}{v_n}=L>0$, then for each $r,s\in\R$ with $r,s\not=0$ and $1<p<\infty$ we have that $\sigma_r(B(r,s),l_p(v))=\{r\}$,   $\sigma_c(B(r,s),l_p(v))=\overline{\D}_{L|s|}(r)\setminus\{r\}$ and $\sigma(B(r,s),l_p(v))=\overline{\D}_{L|s|}(r)$ (cf. Theorems \ref{charac spect}, \ref{charac spect r} and \ref{charac spect c}).

(3) If $v$ is a weight sequence satisfying  $\lim_{n\to\infty}\frac{v_{n+1}}{v_n}=0$, then for each $r,s\in\R$ with $r,s\not=0$ and $1<p<\infty$ we have that $\sigma_r(B(r,s),l_p(v))=\sigma(B(r,s),l_p(v))=\{r\}$ and $\sigma_c(B(r,s),l_p(v))=\emptyset$ (cf. Theorems \ref{charac spect}, \ref{charac spect r} and \ref{charac spect c}). 
\end{rem}

\begin{ex}\label{E.2} Let $\alpha=\{\alpha_n\}_{n\in\N_0}\subseteq \R$ be a non-negative increasing sequence with $\lim_{n\to\infty}\alpha_n=\infty$ and $\lim_{n\to\infty}(\alpha_{n+1}-\alpha_n)=:l\in [0,\infty)$. For some $a >0$ and for each $n\in\N_0$,
set $v_n:= a^{\alpha_n}$. Then $\frac{v_{n+1}}{v_n}=a^{\alpha_{n+1}-\alpha_n},$ for every $n\in\N_0$. Accordingly, $N=a^{\sup_{n\in\N_0}(\alpha_{n+1}-\alpha_n)}<\infty$ and hence $B(r,s)\in \cL(l_p(v))$ for every $r,s\in \R$ with $r,s\not=0$. Moreover, $L=L_1=a^l$. So, by  Theorem \ref{charac spect}, we have that 
$$
\sigma(B(r , s), l_p(v))= \left\{\alpha\in\C\colon |r-\alpha|\leq a^l |s|\right\}.
$$
Furthermore, by Theorem \ref{charac spect r} we have that 
$$
\left\{\alpha\in\C\colon |r-\alpha|  < a^l|s|\right\}\subseteq \sigma_r(B(r , s), l_p(v)) \subseteq\left\{\alpha\in\C\colon |r-\alpha|\leq  a^l|s|\right\}.
$$
More precisely, we have that 
 $$
 \sigma_r(B(r , s), l_p(v))=\left\{\alpha\in\C\colon |r-\alpha|  \leq a^l|s|\right\}
 $$
 if, and only if, the series $\sum_{n=0}^\infty a^{(ln-\alpha_n)p'}<\infty$.  Indeed, by Remark \ref{oss1} if $\alpha\in\C$ satisfies $|r-\alpha|=a^l|s|$, then $\alpha\in \sigma_r((B(r , s), l_p(v))$ if, and only if, the element $\{(\frac{\alpha-r}{s})^n\}_{n\in\N_0}\in l_{p'}(\frac{1}{v})$, where $\frac{1}{v}=\{\frac{1}{a^{\alpha_n}}\}_{n\in\N_0}$.
\end{ex}

We conclude this section with results on the ergodic properties of the operator 
$B(r,s)$
acting on the weighted Banach space 
 $l_p(v)$. 

\begin{prop}\label{melpv}
Let $1<p<\infty$ and $v$ be a weight sequence such that $\{\frac{v_{n+1}}{v_{n}}\}_{n\in\N_0}\in l_\infty$. For given $r,s\in\R$ and $r,s\neq0$, consider the following statements:
\begin{enumerate}
	\item $|r|+N|s|\leq 1$;
	\item $B(r,s)$ is power bounded in $\cL(l_p(v))$;
\item $B(r,s)$ is mean ergodic in $\cL(l_p(v))$; 
\item $\frac{B^n(r,s)}{n}\to0$ in $\cL_s(l_p(v))$ as $n\to\infty$;
\item $|r|+L|s|\leq 1$,
\end{enumerate}
where $N:=\|\{\frac{v_{n+1}}{v_{n}}\}_{n\in\N_0}\|_\infty$ and $L:=\max\lim_{n\to\infty}\frac{v_{n+1}}{v_{n}}$.
Then $(1)\Rightarrow(2)\Rightarrow(3)\Rightarrow(4)\Rightarrow(5)$.

Finally, if $L_1:=\min\lim_{n\to\infty}\frac{v_{n+1}}{v_{n}}>0$, then the operator $B(r,s)\in\cL(l_p(v))$ is not supercyclic.
\end{prop}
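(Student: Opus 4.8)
The plan is to reduce every implication to a result already established, exploiting that for $1<p<\infty$ the space $l_p(v)$ is a reflexive Banach space, being isometrically isomorphic to $l_p$ via the operator $T_v$ of Lemma \ref{L.lemma}. For $(1)\Rightarrow(2)$ I would quote the norm estimate of Remark \ref{norm}: if $|r|+N|s|\leq 1$, then $\|B(r,s)\|\leq |r|+N|s|\leq 1$, hence $\|B^n(r,s)\|\leq\|B(r,s)\|^n\leq 1$ for every $n\in\N_0$, i.e. $B(r,s)$ is power bounded. For $(2)\Rightarrow(3)$ I would use that on a reflexive Banach space every power bounded operator is mean ergodic, \cite[Corollary 4, \S VIII.5]{DS}. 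For $(3)\Rightarrow(4)$ it suffices to recall the identity \eqref{cesarof}: once $\{B_{[n]}(r,s)\}_n$ converges in $\cL_s(l_p(v))$, it gives $\frac{B^n(r,s)}{n}=B_{[n]}(r,s)-\frac{n-1}{n}B_{[n-1]}(r,s)\to 0$ in $\cL_s(l_p(v))$.

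The step $(4)\Rightarrow(5)$ is the one that really uses the spectral picture. If $\frac{B^n(r,s)}{n}\to 0$ in $\cL_s(l_p(v))$, then by \cite[Lemma 1, \S VIII.8, page 709]{DS} (cf. \cite[Proposition 5.1]{AM}) we get $\sigma(B(r,s),l_p(v))\subseteq\overline{\D}$. By Theorem \ref{charac spect}, $\sigma(B(r,s),l_p(v))=\overline{\D}_{L|s|}(r)$; in particular the real numbers $r-L|s|$ and $r+L|s|$ both belong to $\overline{\D}$, hence $-1\leq r-L|s|\leq r\leq r+L|s|\leq 1$, which forces $|r|+L|s|\leq 1$. (For $L>0$ this is precisely Lemma \ref{incspe}; for $L=0$ the same chain of inequalities gives $|r|=|r|+L|s|\leq 1$, so the conclusion still holds even though Lemma \ref{incspe} does not literally apply.)

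For the final assertion, suppose $L_1:=\min\lim_{n\to\infty}\frac{v_{n+1}}{v_n}>0$. Since $s\neq 0$, the open disk $\D_{L_1|s|}(r)$ has positive radius and therefore contains infinitely many points, and by Theorem \ref{charac spect p} it is contained in $\sigma_p(B^*(r,s),l^*_p(v))$. Thus $\sigma_p(B^*(r,s),l^*_p(v))$ has at least two distinct elements, so \cite[Proposition 1.26]{BM} shows that $B(r,s)$ is not supercyclic in $l_p(v)$, exactly as in the Remark following Theorem \ref{spbrs}. I expect the only mildly delicate point of the whole argument to be the step $(4)\Rightarrow(5)$, where one must match the mean-ergodic spectral bound $\sigma(B(r,s),l_p(v))\subseteq\overline{\D}$ with the exact shape of the spectrum from Theorem \ref{charac spect}; the remaining implications and the supercyclicity claim are direct applications of results proved earlier.
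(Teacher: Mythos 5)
Your proposal is correct and follows essentially the same route as the paper, which proves $(1)\Rightarrow(2)$ via the norm estimate of Remark \ref{norm} and $(2)\Rightarrow(3)\Rightarrow(4)\Rightarrow(5)$ by repeating the argument of Proposition \ref{me} (reflexivity, the identity \eqref{cesarof}, the spectral bound $\sigma(B(r,s),l_p(v))\subseteq\overline{\D}$ from \cite[Lemma 1, \S VIII.8]{DS} combined with Theorem \ref{charac spect} and Lemma \ref{incspe}), and which deduces non-supercyclicity from $\D_{L_1|s|}(r)\subseteq\sigma_p(B^*(r,s),l_p^*(v))$ and \cite[Proposition 1.26]{BM} exactly as you do. Your extra remark covering the degenerate case $L=0$ (where the chain $-1\leq r-L|s|\leq r+L|s|\leq 1$ still yields $|r|+L|s|\leq 1$) is a minor refinement the paper leaves implicit, but it does not change the argument.
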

\begin{proof} The proof of (1)$\Rightarrow $(2) follows from Remark \ref{norm}. The proof of (2)$\Rightarrow$(3)$\Rightarrow $(4)$\Rightarrow $(5) follows by arguing as in the proof  of Proposition \ref{me}, in view of Lemma \ref{incspe}.

	Since $\D_{L_1|s|}(r)\subseteq \sigma_p(B^*(r,s),l^*_p)$ and  $\D_{L_1|s|}(r)$ is an open disc of center $r$ and radius $L_1|s|>0$ (as $L_1>0$), the operator $B(r,s)$ cannot be supercyclic by \cite[Proposition 1.26]{BM}.
\end{proof}

\begin{rem} Note that the conditions (1)$\div$(5) in Proposition \ref{melpv} are all equivalent whenever $N=L$ as in Proposition \ref{me}.
\end{rem}

\begin{prop}\label{meulpv} 
Let $1<p<\infty$ and $v$ be a weight sequence such that $\{\frac{v_{n+1}}{v_{n}}\}_{n\in\N_0}\in l_\infty$.
For each  $r,s\in\R$ and $r,s\neq0$, consider the following statements:
\begin{enumerate}
\item $|r|+L|s|< 1$;
\item $B^n(r,s)\to0$ in $\cL_b(l_p(v))$  as $n\to\infty$;
\item $B(r,s)$ is uniformly mean ergodic in $\cL(l_p(v))$;
\item $1\not\in \sigma(B(r,s),l_p(v))$,
\end{enumerate}
where $L:=\max\lim_{n\to\infty}\frac{v_{n+1}}{v_n}$. Then $(1)\Rightarrow(2)\Rightarrow(3)\Rightarrow(4)$. 
\end{prop}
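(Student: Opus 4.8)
The plan is to mirror the structure of Proposition \ref{ume}, adapting each implication to the weighted setting by replacing the crude norm bound $\|B(r,s)\|\le|r|+|s|$ with the spectral-radius information coming from Theorem \ref{charac spect}, since on $l_p(v)$ the operator norm need not be $|r|+|s|$ but the spectrum is exactly $\overline{\D}_{L|s|}(r)$.

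For $(1)\Rightarrow(2)$: assume $|r|+L|s|<1$. By Theorem \ref{charac spect}, $\sigma(B(r,s),l_p(v))=\overline{\D}_{L|s|}(r)$, and by Lemma \ref{incspe} the hypothesis $|r|+L|s|<1$ gives (indeed a strict version of) $\overline{\D}_{L|s|}(r)\subseteq\overline{\D}$; in fact one checks $\overline{\D}_{L|s|}(r)\subseteq\overline{\D}_\rho$ for some $\rho<1$, so the spectral radius $\lim_n\|B^n(r,s)\|^{1/n}=\rho(B(r,s),l_p(v))<1$. Hence $\|B^n(r,s)\|^{1/n}\to\rho<1$, which forces $\|B^n(r,s)\|\to0$, i.e. $B^n(r,s)\to0$ in $\cL_b(l_p(v))$. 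The implication $(2)\Rightarrow(3)$ is immediate: if $B^n(r,s)\to0$ in $\cL_b(l_p(v))$, then the Ces\`aro means $B_{[n]}(r,s)=\frac1n\sum_{m=1}^nB^m(r,s)\to0$ in $\cL_b(l_p(v))$ as well (the averages of a norm-null sequence are norm-null), so $B(r,s)$ is uniformly mean ergodic.

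For $(3)\Rightarrow(4)$: this is the step that requires the structural consequences of uniform mean ergodicity, exactly as in Proposition \ref{ume}. Uniform mean ergodicity of $B(r,s)$ on $l_p(v)$ implies, by \cite{Lin} (see also \cite[Theorem 5.4]{ABR-0}), that $(I-B(r,s))(l_p(v))$ is closed and
\[
(I-B(r,s))(l_p(v))\oplus\Ker(I-B(r,s))=l_p(v).
\]
By Theorem \ref{charac spect p}, $\sigma_p(B(r,s),l_p(v))=\emptyset$, so $\Ker(I-B(r,s))=\{0\}$; combined with the direct sum decomposition, $I-B(r,s)$ is bijective on $l_p(v)$, hence $1\notin\sigma(B(r,s),l_p(v))$.

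The main obstacle is conceptual rather than computational: in the weighted setting the naive norm estimate of Proposition \ref{ineqb} is no longer available in the right direction (Remark \ref{norm} only gives $(|r|^p+N|s|^p)^{1/p}\le\|B(r,s)\|\le|r|+N|s|$, with $N$ possibly much larger than $L$), so the proof of $(1)\Rightarrow(2)$ cannot simply bound $\|B(r,s)\|$; one genuinely has to route through the spectral radius and the identification $\sigma(B(r,s),l_p(v))=\overline{\D}_{L|s|}(r)$ from Theorem \ref{charac spect}. Everything else is a routine transcription of the arguments already used for the unweighted case.
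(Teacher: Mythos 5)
Your proposal is correct and follows essentially the same route as the paper: for $(1)\Rightarrow(2)$ it invokes the spectral radius formula together with Theorem \ref{charac spect} (so that the spectral radius equals $|r|+L|s|<1$, forcing $\|B^n(r,s)\|\to 0$), for $(2)\Rightarrow(3)$ the norm-null averages argument as in Proposition \ref{ume}, and for $(3)\Rightarrow(4)$ Lin's theorem plus the emptiness of the point spectrum, exactly as in the paper's proof.
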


\begin{proof} 
(1)$\Rightarrow$(2): Recall that $\lim_{n\to\infty}\|B^n(r,s)\|^{1/n}=\sup\{|\alpha|\colon \alpha\in\sigma(B(r,s),l_p(v))\}$. But $\sup\{|\alpha|\colon \alpha\in\sigma(B(r,s),l_p(v))\}=|r|+L|s|$ in view of Theorem \ref{charac spect}. So, there exist $a\in\R$ satisfying $|r|+L|s|<a<1$ and $n_0\in\N$ such that $\|B^n(r,s)\|^{1/n}<a$ for every $n\geq n_0$. It follows that $\|B^n(r,s)\|<a^n$ for every $n\geq n_0$. This implies that $B^n(r,s)\to0$ in $\cL_b(l_p(v))$  as $n\to\infty$.

(2)$\Rightarrow$(3): This fact follows by  arguing as in the proof  of Proposition  \ref{ume}. 

(3)$\Rightarrow$(4): The proof follows in a similar way as in the proof of (3)$\Rightarrow$(4) of Proposition \ref{ume}. For the sake of completeness, we present it.

The uniform mean ergodicity implies that $(I-B(r,p))(l_p(v))$ is a closed subspace of $l_p(v)$  and 
\begin{equation}\label{eq.sommav}
	(I-B(r,s))(l_p(v))\oplus \Ker(I-B(r,s))=l_p(v),
\end{equation}
\cite{Lin} (see also \cite[Theorem 5.4]{ABR-0}). Since $\sigma_p(B(r , s), l_p(v))=\emptyset$, we have that $\Ker(I-B(r,s))=\{0\}$. In view of  \eqref{eq.sommav} it follows that $(I-B(r,s))(l_p(v))=l_p(v)$ and hence, $(I-B(r,s))\colon l_p(v)\to l_p(v)$ is bijective. Accordingly,  $1\notin \sigma(B(r,s), l_p(v))=\overline{\D}_{L|s|}(r)$.
\end{proof}

\begin{ex} Let $v_n=a^n$ for some $a>0$ and every $n\in\N_0$ (cf. Example \ref{E.2}).  Since $N=L=a$, by  Proposition \ref{melpv} we have  for each $r,s\in\R$ and $r,s\neq0$ that the operator $B(r,s)$ is mean ergodic if, and only if, it is power bounded if, and only if, $|r|+a|s|\leq 1$. On the other hand, by Proposition \ref{meulpv} the operator $B(r,s)$ is uniformly  mean ergodic whenever $|r|+a|s|< 1$.
\end{ex}

\section{$B(r,s)$ acting on power series spaces $\Lambda_\infty(\alpha)$ of infinite type  and on their strong duals $\Lambda_\infty(\alpha)'$}

Let $\alpha=\{\alpha_n\}_{n\in\N_0}\subseteq \R$ be a non-negative increasing sequence with $\lim_{n\to\infty}\alpha_n=\infty$. 
The \textit{power series space $\Lambda_\infty(\alpha)$  of  infinite type associated to $\alpha$} is defined by 
\begin{equation*}\label{eq.spazios}
	\Lambda_\infty(\alpha):=\left\{x\in \omega\,\colon \, |x|_k^2:=\sum_{n=0}^\infty |x_n|^2e^{2k\alpha_n}<+\infty,\ \forall k\in\N\right\}.
\end{equation*}  
The sequence space $\Lambda_\infty(\alpha)$,  equipped with  the locally convex topology generated by the sequence $(|\cdot|_k)_{k\in\N}$ of norms, is a  Fr\'echet Schwartz  space (hence, a Fr\'echet Montel space); see \cite[p. 357]{MV}. The  power series space $\Lambda_\infty(\alpha)$ of infinite type is  nuclear if, and only if, $\sup_{n\in\N_0}\frac{\log(n+1)}{\alpha_{n+1}}<\infty$ (cf. \cite[Proposition 29.6(1)]{MV}). 

 Power series spaces of infinite type form an important class of K\"othe spaces. For instance, the space of holomorphic functions on $\C$ and the space $\cs$ of rapidly decreasing sequences are examples of power  series spaces of infinite type. Indeed, $\Lambda_\infty((n+1)_{n\in\N_0})\simeq H(\C)$ and $ \Lambda_\infty((\log(n+1))_{n\in\N_0})\simeq \cs$. For the case $H(\C)$, the isomorphism is provided by $T\colon \Lambda_\infty((n+1)_{n\in\N_0})\to H(\C)$, $(x_n)_{n\in\N_0}\mapsto \sum_{n=0}^\infty x_nz^{n}$.

Note that if we set $v_k(n):=e^{k\alpha_n}$ for every $k\in\N$ and $n\in\N_0$, then 
 $\Lambda_\infty(\alpha)$ is the projective limit of the sequence of Banach spaces $l_2(v_k)$, for $k\in\N$, with $l_2(v_{k+1})\hookrightarrow l_2(v_k)$ continuously, i.e., $\Lambda_\infty(\alpha)=\cap_{k\in\N}l_2(v_k)$ and $|\cdot|_k=\|\cdot\|_{2,v_k}$ for every $k\in\N$. Therefore, the strong dual $\Lambda_\infty(\alpha)'$ of $\Lambda_\infty(\alpha)$ is given by $\Lambda_\infty(\alpha)'=\bigcup_{k\in\N} l_2\left(\frac{1}{v_k}\right)$ (i.e., $\Lambda_\infty(\alpha)'$ is the inductive limit, briefly an (LB)-space, of the sequence of Banach spaces $l_2\left(\frac{1}{v_k}\right)$, $k\in\N$, with $l_2\left(\frac{1}{v_k}\right)\hookrightarrow l_2\left(\frac{1}{v_{k+1}}\right)$ continuously). Since $\Lambda_\infty(\alpha)$ is a Fr\'echet Schwartz space (hence, reflexive), its strong dual $\Lambda_\infty(\alpha)'$ is a complete and reflexive (LB)-space. Thus, $(\Lambda_\infty(\alpha)')'_\beta= \Lambda_\infty(\alpha)$.

 The aim of this section is to show that for $r,s\in\R$ and $r,s\neq0$, the operator $B(r,s)$ acts continuously on both the spaces $\Lambda_\infty(\alpha)$ and  $\Lambda_\infty(\alpha)'$, and to describe its spectrum in each setting.

 For the convenience of the reader we recall some results that will be necessary to determine the spectra of the operator $B(r,s)$ acting on $\Lambda_\infty(\alpha)$ or  $\Lambda_\infty(\alpha)'$. To this end, we first observe that  the definitions of spectrum, resolvent, as well as those of point, residual, and continuous spectrum, do not differ from those in the Banach space setting. But some authors (eg. \cite{Wel}) prefer, for $T$ a continuous linear operator over a locally Hausdorff convex space $X$ (briefly, lcHs), to introduce the subset $\rho^*(T,X)$ of $\rho(T,X)$ consisting of all $\lambda\in\C$ for which there exists $\delta>0$ such that the open disc $\D_\delta(\lambda)\subseteq  \rho(T,X)$ and $\{R(\mu, T) \colon \mu\in \D_\delta(\lambda)\}$ is an equicontinuous
 subset of $\cL(X)$. The set $\sigma^*(T,X) := \C \setminus \rho^*(T,X)$, which is a closed set with
 $\sigma(T,X)\subseteq \sigma^*(T,X)$, is called the \textit{Waelbrock spectrum} of $T$. If
 $X$ is a Banach space, then $\sigma(T,X)= \sigma^*(T,X)$.
 
 We also point out that  the definition of (uniformly) mean ergodic operators as well as of (hypercyclic) supercyclic operators in the lcHs  setting  do not differ from those in the Banach space setting.
 
 In the following, we will denote by $T'$ the adjoint (the dual operator) of $T\in\cL(X)$, with $X$ a lcHs, as usual in the setting of non-normable spaces. For all undefined definitions for non-normable spaces, we refer to \cite{MV}.

  For a reflexive  lcHs $X$ and $T \in  \cL(X)$, the following result given in \cite[Lemma 3.1]{Albanese} shows that the spectra of $T$ and of $T'\in \cL(X'_\beta)$  are related, as in the Banach space setting.
 
\begin{lem}\label{L31} Let $X$ be a reflexive and complete lcHs with $X'_\beta$ a complete lcHs and $T \in  \cL(X)$. Then
 \[
 \sigma(T',X'_\beta) = \sigma(T,X) \mbox{ and } \sigma^*(T',X'_\beta)=\sigma^*(T,X).\]
 \end{lem}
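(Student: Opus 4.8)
The plan is to deduce the equality of the (Waelbroeck) spectra from a duality argument combined with the fact that $X$ is reflexive, exactly as one does in the Banach space setting but with the extra care required by the non-normability of $X$ and $X'_\beta$.

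First I would fix $\lambda\in\C$ and recall that the operators $\lambda I-T$ on $X$ and $\lambda I-T'$ on $X'_\beta$ are adjoints of one another: $(\lambda I-T)'=\lambda I-T'$. The key standard fact to invoke is that for a continuous linear operator $S$ between (say) complete reflexive lcHs's whose dual $X'_\beta$ is also complete, $S$ is a topological isomorphism of $X$ onto $X$ if and only if $S'$ is a topological isomorphism of $X'_\beta$ onto $X'_\beta$; this is the locally convex version of the closed range / open mapping argument, using that $X$ is reflexive so that $S''=S$ under the canonical identification $(X'_\beta)'_\beta=X$ (which holds here since $X$ is reflexive and complete with $X'_\beta$ complete). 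Applying this with $S=\lambda I-T$ gives $\lambda\in\rho(T,X)\iff\lambda\in\rho(T',X'_\beta)$, i.e. $\sigma(T,X)=\sigma(T',X'_\beta)$.

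Next I would treat the Waelbroeck spectra. Here the point is that equicontinuity passes correctly through the duality. Suppose $\lambda\in\rho^*(T,X)$, so there is $\delta>0$ with $\overline{\D}_\delta(\lambda)\subseteq\rho(T,X)$ and $\{R(\mu,T):\mu\in\D_\delta(\lambda)\}$ equicontinuous in $\cL(X)$. By the first part, $\D_\delta(\lambda)\subseteq\rho(T',X'_\beta)$ as well, and $R(\mu,T')=R(\mu,T)'$ for each such $\mu$. An equicontinuous family of operators on $X$ has, by the bipolar theorem, a transpose family that is equicontinuous on $X'_\beta$ — more precisely, a set $H\subseteq\cL(X)$ is equicontinuous iff the set $\{S':S\in H\}$ is bounded for the topology of uniform convergence on bounded subsets of $X$, which for $X$ reflexive (hence barrelled) is equivalent to $\{S':S\in H\}$ being equicontinuous on $X'_\beta$. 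Hence $\{R(\mu,T'):\mu\in\D_\delta(\lambda)\}$ is equicontinuous in $\cL(X'_\beta)$, so $\lambda\in\rho^*(T',X'_\beta)$. Running the same argument with the roles of $X$ and $X'_\beta$ interchanged, using $(X'_\beta)'_\beta=X$ and $T''=T$, gives the reverse inclusion, whence $\sigma^*(T,X)=\sigma^*(T',X'_\beta)$.

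The main obstacle is not any single hard estimate but rather making the duality between invertibility (and between equicontinuity of resolvent families) on $X$ and on $X'_\beta$ fully rigorous in the locally convex category: one must use reflexivity and completeness of both $X$ and $X'_\beta$ to guarantee $T''=T$, to have the open mapping/closed graph theorems available (both spaces are, under the hypotheses, webbed and ultrabornological, or one argues directly via polars), and to convert "equicontinuous on $X$" into "equicontinuous on $X'_\beta$" and back. Since this is precisely the content attributed to \cite[Lemma 3.1]{Albanese}, the cleanest course is to cite that reference for the statement and only sketch the duality mechanism as above; in our application $X=\Lambda_\infty(\alpha)$ is a Fréchet–Schwartz space (hence reflexive and complete) and $X'_\beta=\Lambda_\infty(\alpha)'$ is a complete reflexive (DFS)-space, so all the structural hypotheses are met and the lemma applies verbatim.
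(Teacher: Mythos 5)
Your proposal matches what the paper actually does: the paper gives no internal proof of Lemma \ref{L31} at all, but simply quotes it from \cite[Lemma 3.1]{Albanese}, so citing that reference (as you ultimately suggest) is precisely the paper's route, and your sketch of the underlying duality mechanism is essentially the standard and correct one. Two small remarks on the sketch itself. First, the appeal to open mapping/closed graph theorems and to $X$ being ``webbed and ultrabornological'' is both unnecessary and not justified by the hypotheses (a complete reflexive lcHs need not be webbed or ultrabornological): transposing the two resolvent identities $(\lambda I-T')^{-1}(\lambda I-T')=I$ and $(\lambda I-T')(\lambda I-T')^{-1}=I$, and using that the adjoint of a continuous operator is always $\beta$-$\beta$ continuous together with $T''=T$ and $(X'_\beta)'_\beta=X$ (this is exactly reflexivity), already produces a continuous two-sided inverse of $\lambda I-T$, so $\rho(T,X)=\rho(T',X'_\beta)$ with no open mapping argument. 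Second, in the equicontinuity transfer the correct bookkeeping is via polars: for $H\subseteq\cL(X)$, the family $H'=\{S'\colon S\in H\}$ is equicontinuous on $X'_\beta$ if and only if $H$ is bounded in $\cL_b(X)$ (for bounded $B,D\subseteq X$ one has $S'(D^\circ)\subseteq B^\circ$ iff $S(B)$ lies in the bipolar of $D$), and equicontinuity of $H$ implies boundedness in $\cL_b(X)$; applying the same equivalence to $H'$ on $X'_\beta$ and using $(X'_\beta)'_\beta=X$ gives the converse, so the statement you formulate (``equicontinuous on $X$ iff the transposed family is equicontinuous on $X'_\beta$'') is correct under reflexivity, with barrelledness entering only through the definition of reflexivity. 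With these adjustments your argument is a complete proof of the cited lemma, applicable as you note to $X=\Lambda_\infty(\alpha)$ and $X'_\beta=\Lambda_\infty(\alpha)'$.
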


The next result given in \cite[Lemma 2.1]{ABR-2} (cf. also \cite[Lemma 2.5]{ABR-1}), will be necessary to determine the spectra of $B(r,s)$ acting on $\Lambda_\infty(\alpha)$.

\begin{lem}\label{L32} Let $X =\cap_{k\in\N}X_k$ be a Fr\'echet space which is the intersection of a sequence of
Banach spaces $(X_k,\|\cdot\|_k)_{k\in\N}$, satisfying $X_{k+1}\subseteq X_k$ with $\|x\|_k\leq \|x\|_{k+1}$ for each
$k\in\N$ and $x\in X_{k+1}$. Let $T\in  \cL(X)$ satisfy the following condition:

(A) For each $k\in\N$ there exists $T_k\in \cL(X_k)$ such that the restriction of $T_k$  to X (resp. of $T_k$  to $X_{k+1}$) coincides with $T$ (resp. with $T_{k+1}$).

Then the following properties are satisfied:
\begin{itemize}
\item[\rm (i)] $\sigma (T, X)\subseteq \cup_{k\in\N}\sigma(T_k,X_k)$ and $\sigma_p (T , X) \subseteq \cap_{k\in\N}\sigma_p(T_k,X_k)$;
\item[\rm (ii)] If  $\cup_{k\in\N}\sigma(T_k,X_k)\subseteq \overline{\sigma(T, X)}$, then $\sigma^*(T,X) = \overline{\sigma(T, X)}$;
\item[\rm (iii)] If ${\rm dim} \Ker(T_k-\lambda I) = 1$ for each $\lambda \in \cap_{k\in\N}\sigma_p(T_k,X_k)$  and $k\in\N$, then $\sigma_p(T, X) =\cap_{k\in\N}\sigma_p(T_k,X_k)$.
\end{itemize}
\end{lem}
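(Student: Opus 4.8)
The plan is to prove the three items in turn, the common mechanism being that condition (A) forces the inverses of $T-\lambda I$ and of the $T_k-\lambda I$ to be mutually consistent. For (i), suppose $\lambda\notin\bigcup_{k\in\N}\sigma(T_k,X_k)$, so $(T_k-\lambda I)^{-1}\in\cL(X_k)$ for every $k$. Given $y\in X=\bigcap_k X_k$, set $x_k:=(T_k-\lambda I)^{-1}y\in X_k$. Since $x_{k+1}\in X_{k+1}$ and $T_k|_{X_{k+1}}=T_{k+1}$ by (A), one has $(T_k-\lambda I)x_{k+1}=(T_{k+1}-\lambda I)x_{k+1}=y=(T_k-\lambda I)x_k$, and injectivity of $T_k-\lambda I$ on $X_k$ forces $x_k=x_{k+1}$. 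Hence $x:=x_1=x_k$ for all $k$ lies in $\bigcap_k X_k=X$ and satisfies $(T-\lambda I)x=(T_1-\lambda I)x=y$ because $T_1|_X=T$; injectivity of $T-\lambda I$ on $X$ follows the same way from that of $T_1-\lambda I$, and then $(T-\lambda I)^{-1}\in\cL(X)$ by the open mapping theorem for Fr\'echet spaces (see \cite{MV}), so $\lambda\in\rho(T,X)$. For the point spectrum, if $0\neq x\in X$ solves $(T-\lambda I)x=0$ then $x\neq0$ in each $X_k$ and $(T_k-\lambda I)x=(T-\lambda I)x=0$, so $\lambda\in\bigcap_k\sigma_p(T_k,X_k)$.

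For (ii), recall $\sigma(T,X)\subseteq\sigma^*(T,X)$ and that $\sigma^*(T,X)$ is closed, so only $\sigma^*(T,X)\subseteq\overline{\sigma(T,X)}$ needs proof, i.e.\ every $\lambda\notin\overline{\sigma(T,X)}$ belongs to $\rho^*(T,X)$. Pick $\delta>0$ with $\overline{\D_\delta(\lambda)}\cap\overline{\sigma(T,X)}=\emptyset$. By hypothesis $\bigcup_k\sigma(T_k,X_k)\subseteq\overline{\sigma(T,X)}$, hence $\overline{\D_\delta(\lambda)}\subseteq\rho(T_k,X_k)$ for each $k$, and $\D_\delta(\lambda)\subseteq\rho(T,X)$ by (i). As $X_k$ is Banach, $\mu\mapsto R(\mu,T_k)$ is continuous on the compact set $\overline{\D_\delta(\lambda)}$, so $M_k:=\sup_{\mu\in\overline{\D_\delta(\lambda)}}\|R(\mu,T_k)\|_{\cL(X_k)}<\infty$. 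By the consistency established in (i), $R(\mu,T)x=R(\mu,T_k)x$ for all $x\in X$, all $\mu\in\D_\delta(\lambda)$ and all $k$, whence $\|R(\mu,T)x\|_k=\|R(\mu,T_k)x\|_k\leq M_k\|x\|_k$. Since $(\|\cdot\|_k)_k$ is a fundamental system of seminorms for $X$, this says $\{R(\mu,T)\colon\mu\in\D_\delta(\lambda)\}$ is equicontinuous in $\cL(X)$, so $\lambda\in\rho^*(T,X)$.

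For (iii), by (i) it suffices to prove $\bigcap_k\sigma_p(T_k,X_k)\subseteq\sigma_p(T,X)$. Fix $\lambda$ in that intersection. If $0\neq x\in\Ker(T_{k+1}-\lambda I)\subseteq X_{k+1}\subseteq X_k$, then $(T_k-\lambda I)x=(T_{k+1}-\lambda I)x=0$ by (A), so $\Ker(T_{k+1}-\lambda I)\subseteq\Ker(T_k-\lambda I)$; as both subspaces are one-dimensional and non-zero, they coincide. Hence a fixed $0\neq x_0\in\Ker(T_1-\lambda I)$ lies in $\Ker(T_k-\lambda I)\subseteq X_k$ for every $k$, so $x_0\in\bigcap_k X_k=X$ with $x_0\neq0$, and $(T-\lambda I)x_0=(T_1-\lambda I)x_0=0$, giving $\lambda\in\sigma_p(T,X)$.

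The algebraic bookkeeping forced by (A) is routine; the step requiring care is (ii), where one must first identify $R(\mu,T)$ with the restriction to $X$ of $R(\mu,T_k)$ and then read off equicontinuity in $\cL(X)$ from the Banach-space resolvent bounds $M_k$ — and it is exactly here that the extra hypothesis $\bigcup_k\sigma(T_k,X_k)\subseteq\overline{\sigma(T,X)}$ is indispensable.
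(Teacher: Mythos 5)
Your proof is correct: part (i) via the compatibility of the resolvents of the $T_k$ and the open mapping theorem, part (ii) via the uniform Banach-space resolvent bounds $M_k$ on the closed disc transferred to equicontinuity in $\cL(X)$ through the identity $R(\mu,T)x=R(\mu,T_k)x$ for $x\in X$, and part (iii) via the nesting and one-dimensionality of the kernels. Note that the paper does not prove this lemma but quotes it from \cite[Lemma 2.1]{ABR-2} (cf. \cite[Lemma 2.5]{ABR-1}), and your argument is essentially the standard one given there, so there is nothing to add beyond the (intended and implicit) reading that the Fr\'echet topology of $X=\cap_k X_k$ is the projective-limit topology generated by the norms $\|\cdot\|_k$, which you use when deducing equicontinuity.
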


We now give the complete characterization of the spectra of the operator $B(r,s)$ acting on  $\Lambda_\infty(\alpha)$.

\begin{thm}\label{T.s} Let $\alpha=\{\alpha_n\}_{n\in\N_0}\subseteq \R$ be a non-negative increasing sequence satisfying  the conditions  $\lim_{n\to\infty}\alpha_n=\infty$,  $\lim_{n\to\infty}(\alpha_{n+1}-\alpha_n)=:l\in [0,\infty)$ and $\alpha_n\geq c \log(n+1)$ for every $n\in\N_0$ and some $c>0$. Then
	for each $r,s\in\R$ and $r,s\neq0$, the operator $B(r,s)\in \cL(\Lambda_\infty(\alpha))$. Moreover, the following properties are satisfied.
	\begin{itemize}
\item[(1)] If $l=0$, then
	\begin{align}
		&\sigma_p(B(r,s),\Lambda_\infty(\alpha))=\sigma_c(B(r,s),\Lambda_\infty(\alpha))=\emptyset,\label{eq.Spcs} \\
		 \sigma^*(B(r,s), \Lambda_\infty(\alpha))&=\sigma(B(r,s),\Lambda_\infty(\alpha))=\sigma_r(B(r,s),\Lambda_\infty(\alpha))=\overline{\D}_{|s|}(r)\label{eq.SrW}
		\end{align}
	\item[ (2)] If $l\in (0,\infty)$, then
	\begin{align}
		&\sigma_p(B(r,s),\Lambda_\infty(\alpha))=\sigma_c(B(r,s),\Lambda_\infty(\alpha))=\emptyset,\label{eq.Spcs} \\
\quad	\sigma^*(B(r,s),  \Lambda_\infty(\alpha))&=\sigma(B(r,s),\Lambda_\infty(\alpha))=\sigma_r(B(r,s),\Lambda_\infty(\alpha))=\C.\label{eq.SrWW}
\end{align}
	\end{itemize}
	\end{thm}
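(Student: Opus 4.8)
The plan is to realize $\Lambda_\infty(\alpha)$ as the Fr\'echet intersection $\cap_{k\in\N}l_2(v_k)$ with $v_k(n)=e^{k\alpha_n}$, verify the hypotheses of Lemma \ref{L32}, and then transfer the weighted-$l_2$ results of Section \ref{lpv} (Theorems \ref{charac spect}, \ref{charac spect p}, \ref{charac spect r}, \ref{charac spect c}) to the projective-limit setting. First I would check continuity: for each $k$ we have $\frac{v_k(n+1)}{v_k(n)}=e^{k(\alpha_{n+1}-\alpha_n)}$, which is bounded because $\alpha_{n+1}-\alpha_n\to l<\infty$ (a convergent sequence is bounded). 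Hence by Proposition \ref{contbrs} the operator $B(r,s)$ maps each $l_2(v_k)$ continuously into itself, and its restriction to $\Lambda_\infty(\alpha)$ is continuous for the Fr\'echet topology; moreover these restrictions are compatible across $k$, so condition (A) of Lemma \ref{L32} holds with $T_k=B(r,s)|_{l_2(v_k)}$. Here the hypothesis $\alpha_n\ge c\log(n+1)$ is what guarantees (via the nuclearity criterion $\sup_n\frac{\log(n+1)}{\alpha_{n+1}}<\infty$, equivalently Schwartzness) that the point spectrum behaves well, but its more immediate role is simply that $\Lambda_\infty(\alpha)$ is a genuine Fr\'echet sequence space to which the lemma applies.

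\textbf{Computing the weighted spectra.} For each fixed $k$, apply Theorem \ref{charac spect} with weight $v_k$: since $\frac{v_k(n+1)}{v_k(n)}=e^{k(\alpha_{n+1}-\alpha_n)}\to e^{kl}$, we get $L=L_1=e^{kl}$, so $\sigma(B(r,s),l_2(v_k))=\overline{\D}_{e^{kl}|s|}(r)$, $\sigma_p(B(r,s),l_2(v_k))=\emptyset$ (Theorem \ref{charac spect p}), and by Theorem \ref{charac spect r} combined with Remark \ref{oss1}, $\sigma_r(B(r,s),l_2(v_k))\supseteq\D_{e^{kl}|s|}(r)$ with the boundary points $\alpha$ satisfying $|r-\alpha|=e^{kl}|s|$ belonging to $\sigma_r$ iff $\{(\frac{\alpha-r}{s})^n\}_n\in l_2(\frac{1}{v_k})$, i.e. iff $\sum_n e^{-2k\alpha_n}|\frac{\alpha-r}{s}|^{2n}=\sum_n e^{2n(kl)-2k\alpha_n}<\infty$ when $|\frac{\alpha-r}{s}|=e^{kl}$. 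Now split on $l$:

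\textbf{Case $l=0$.} Then $e^{kl}=1$ for all $k$, so every $\sigma(B(r,s),l_2(v_k))=\overline{\D}_{|s|}(r)$ and by Lemma \ref{L32}(i) together with $\cup_k\sigma(T_k,X_k)=\overline{\D}_{|s|}(r)\subseteq\overline{\sigma(B(r,s),\Lambda_\infty(\alpha))}$ (the reverse inclusion from part (i) forces equality and closedness), we get $\sigma(B(r,s),\Lambda_\infty(\alpha))=\overline{\D}_{|s|}(r)$ and, by Lemma \ref{L32}(ii), $\sigma^*=\overline{\D}_{|s|}(r)$ as well. For the point spectrum, $\cap_k\sigma_p(T_k,X_k)=\emptyset$, so Lemma \ref{L32}(i) gives $\sigma_p(B(r,s),\Lambda_\infty(\alpha))=\emptyset$. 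For the residual part: a functional $\phi\in\Lambda_\infty(\alpha)'=\cup_k l_2(\frac{1}{v_k})$ in $\Ker(B^*(r,s)-\alpha I)$ must, by Remark \ref{oss1}, be a scalar multiple of $\{(\frac{\alpha-r}{s})^n\}_n$; for any $\alpha\in\overline{\D}_{|s|}(r)$, i.e. $|\frac{\alpha-r}{s}|\le 1$, and using $\alpha_n\ge c\log(n+1)$ so that $\frac1{v_k}=\{e^{-k\alpha_n}\}\le\{(n+1)^{-ck}\}$, the series $\sum_n e^{-2k\alpha_n}|\frac{\alpha-r}{s}|^{2n}\le\sum_n(n+1)^{-2ck}<\infty$ for $k$ large, so this functional lies in $l_2(\frac1{v_k})\subseteq\Lambda_\infty(\alpha)'$ and is a genuine nonzero eigenvector of the adjoint $B(r,s)'=B^*(r,s)$; hence every $\alpha\in\overline{\D}_{|s|}(r)$ has $\overline{(B(r,s)-\alpha I)(\Lambda_\infty(\alpha))}\ne\Lambda_\infty(\alpha)$, i.e. $\alpha\in\sigma_r$. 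Since $\sigma=\sigma_p\cup\sigma_r\cup\sigma_c$ disjointly and $\sigma_p=\emptyset$, $\sigma_r=\overline{\D}_{|s|}(r)=\sigma$, we conclude $\sigma_c=\emptyset$, which gives \eqref{eq.Spcs}--\eqref{eq.SrW}.

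\textbf{Case $l\in(0,\infty)$.} Now $e^{kl}\to\infty$, so $\cup_k\sigma(T_k,X_k)=\cup_k\overline{\D}_{e^{kl}|s|}(r)=\C$; by Lemma \ref{L32}(i) $\sigma(B(r,s),\Lambda_\infty(\alpha))\subseteq\C$ trivially, but we must show it is all of $\C$. Fix any $\alpha\in\C$. Choose $k$ so large that $|\frac{\alpha-r}{s}|<e^{kl}$, equivalently $\alpha\in\D_{e^{kl}|s|}(r)=\sigma_r(B(r,s),l_2(v_k))$, and again realize the adjoint eigenvector $\phi_\alpha=\{(\frac{\alpha-r}{s})^n\}_n$: it lies in $l_2(\frac1{v_k})$ precisely because $\sum_n e^{-2k\alpha_n}|\frac{\alpha-r}{s}|^{2n}<\infty$ — here one uses $\alpha_n\ge c\log(n+1)$ and, more crucially, that for large $k$ one has $e^{kl}$ beating any fixed geometric rate while $e^{-k\alpha_n}$ decays super-geometrically in $n$ (since $\alpha_n\to\infty$); a direct ratio test shows convergence once $|\frac{\alpha-r}{s}|e^{-kl}<1$... but more simply, pick $k$ with $|\frac{\alpha-r}{s}|<b<e^{kl}$ and note $\sum_n(b e^{-k\alpha_n/n})^{2n}$-type estimates converge because $\alpha_n/n$ is bounded below away from a limit forcing the summand to $0$; in any event the argument of Theorem \ref{charac spect p} (the $\D_{L_1|s|}(r)\subseteq\sigma_p(B^*,\cdot)$ inclusion) applies verbatim to $l_2(v_k)$ and gives $\phi_\alpha\in l_2(\frac1{v_k})\setminus\{0\}$. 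Thus $\phi_\alpha\in\Lambda_\infty(\alpha)'$ is a nonzero element of $\Ker(B(r,s)'-\alpha I)$, so $\overline{(B(r,s)-\alpha I)(\Lambda_\infty(\alpha))}\ne\Lambda_\infty(\alpha)$, whence $\alpha\in\sigma_r(B(r,s),\Lambda_\infty(\alpha))$. As $\alpha\in\C$ was arbitrary, $\sigma_r=\sigma=\C$; Lemma \ref{L32}(ii) gives $\sigma^*=\overline{\sigma}=\C$; $\sigma_p=\cap_k\emptyset=\emptyset$ by Lemma \ref{L32}(i); and disjointness of the fine-spectrum decomposition forces $\sigma_c=\emptyset$. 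This yields \eqref{eq.Spcs}--\eqref{eq.SrWW}.

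\textbf{Main obstacle.} The delicate point is the residual-spectrum claim: one must exhibit, for every $\alpha$ in the asserted spectrum, a genuine nonzero continuous functional on $\Lambda_\infty(\alpha)$ annihilated by $B(r,s)'-\alpha I$ — equivalently, show $\{(\frac{\alpha-r}{s})^n\}_n\in\Lambda_\infty(\alpha)'=\cup_k l_2(\frac1{v_k})$. For fixed $\alpha$ this is a summability condition $\sum_n e^{-2k\alpha_n}|\frac{\alpha-r}{s}|^{2n}<\infty$ for \emph{some} $k$; when $l>0$ the geometric growth $|\frac{\alpha-r}{s}|^{2n}$ can be arbitrarily fast, and it is exactly the freedom to take $k\to\infty$ (so that $e^{-2k\alpha_n}$, with $\alpha_n\to\infty$, eventually dominates) that saves the day, while the hypothesis $\alpha_n\ge c\log(n+1)$ ensures the tail is controlled by a convergent $p$-series for each such $k$. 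Getting this uniform-in-$\alpha$ choice of $k$ clean — and checking it dovetails with the case distinction on $l$ via $L_1=e^{kl}$ — is the crux; the rest is bookkeeping with Lemma \ref{L32} and the disjointness $\sigma=\sigma_p\sqcup\sigma_r\sqcup\sigma_c$.
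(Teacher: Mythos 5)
Your proof is essentially correct, but it reaches the hard inclusions by a genuinely different route than the paper. Where the paper argues by contradiction inside the Banach steps — it shows (via Example \ref{E.2}) that for $k$ large $\sigma_r(B(r,s),l_2(v_k))$ is the \emph{closed} disc $\overline{\D}_{|s|}(r)$ (this is where $\alpha_n\ge c\log(n+1)$ enters) and then transfers density of ${\rm span}\{e_n\}$ between $\Lambda_\infty(\alpha)$ and $l_2(v_k)$ to exclude $\lambda\in\rho$ and $\lambda\in\sigma_c$ — you instead work directly in the dual: for each candidate $\lambda$ you exhibit the transpose eigenvector $\phi_\lambda=\{(\tfrac{\lambda-r}{s})^n\}_n$ in $\Lambda_\infty(\alpha)'=\cup_k l_2(\tfrac1{v_k})$ (for $l=0$ via $\sum_n e^{-2k\alpha_n}\le\sum_n(n+1)^{-2kc}<\infty$ for $k$ large, which is the correct use of the hypothesis $\alpha_n\ge c\log(n+1)$; for $l>0$ via the inclusion $\D_{e^{kl}|s|}(r)\subseteq\sigma_p(B^*(r,s),l_2(\tfrac1{v_k}))$ of Theorem \ref{charac spect p} with $k$ chosen so that $|\tfrac{\lambda-r}{s}|<e^{kl}$), so that $\phi_\lambda$ annihilates the range of $B(r,s)-\lambda I$ and, since $\sigma_p=\emptyset$ by Lemma \ref{L32}(i), $\lambda\in\sigma_r(B(r,s),\Lambda_\infty(\alpha))$; the disjoint decomposition then gives $\sigma_c=\emptyset$ and the equality $\sigma_r=\sigma$ at once. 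This is arguably cleaner than the paper's two separate contradiction arguments. Two points should be repaired in the write-up, though neither is a fatal gap. First, Lemma \ref{L32}(i) yields only $\sigma(B(r,s),\Lambda_\infty(\alpha))\subseteq\cup_k\sigma(B(r,s),l_2(v_k))$; it does not "force" the reverse inclusion, which is instead the hypothesis of part (ii) and in your proof is supplied only later by the $\sigma_r$ argument — so the order must be: $\sigma_p=\emptyset$ and the upper bound from (i), then $\overline{\D}_{|s|}(r)\subseteq\sigma_r\subseteq\sigma$ (resp. $\C=\sigma_r$) from the dual eigenvectors, then Lemma \ref{L32}(ii) for $\sigma^*=\sigma$. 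Second, in the case $l>0$ the heuristics about "super-geometric decay" and the $\sum_n(be^{-k\alpha_n/n})^{2n}$ estimate are shaky and unnecessary: the clean (and sufficient) justification is exactly your final appeal to Theorem \ref{charac spect p}/Remark \ref{oss1} applied to the weight $v_k$ with $L_1=e^{kl}\to\infty$. Likewise, the condition $\alpha_n\ge c\log(n+1)$ has nothing to do with $\Lambda_\infty(\alpha)$ being Fr\'echet or with nuclearity in this proof; its only role (as in the paper, cf. Remark \ref{R_Power}) is the summability needed to capture the boundary circle when $l=0$, which you do use correctly.
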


\begin{proof} We first observe that $\frac{v_k(n+1)}{v_k(n)}=e^{k(\alpha_{n+1}-\alpha_n)}$ for each $k\in\N$ and $n\in\N_0$. Since there exists $\lim_{n\to\infty}(\alpha_{n+1}-\alpha_n)=l\in[0,\infty)$, for each $k\in\N$ we have $N_k:=\sup_{n\in\N_0}\frac{v_k(n+1)}{v_k(n)}=e^{k\sup_{n\in\N_0}(\alpha_{n+1}-\alpha_n)}<\infty$. By Proposition \ref{contbrs} and Remark \ref{norm}, it follows for each $k\in\N$ that $B(r,s)\in \cL(l_2(v_k))$ with operator norm less or equal to $|r|+N_k|s|$. Since $\Lambda_\infty(\alpha)=\cap_{k\in\N}l_2(v_k)$ is the projective limit of the sequence $(l_2(v_k))_{k\in\N}$ of Banach spaces, this implies that the operator $B(r,s)\in \cL(\Lambda_\infty(\alpha))$. 
	
	(1) For each $k\in\N$ we have that  $\lim_{n\to\infty}\frac{v_k(n+1)}{v_k(n)}=\lim_{n\to\infty}e^{k(\alpha_{n+1}-\alpha_n)}=e^0=1$. Accordingly, $\max\lim_{n\to\infty}\frac{v_k(n+1)}{v_k(n)}=\min\lim_{n\to\infty}\frac{v_k(n+1)}{v_k(n)}=1$ for every $k\in\N$. On the other hand, the series $\sum_{n=0}^\infty e^{-2k\alpha_n}<\infty$ for  $k\in\N$ large enough. Indeed, for a fixed $k\in\N$,  observe that the condition $\alpha_n\geq c\log(n+1)$ for every $n\in\N_0$ implies that $e^{-k\alpha_n}\leq e^{-kc\log (n+1)}=\frac{1}{(n+1)^{kc}}$, where the series $\sum_{n=0}^\infty\frac{1}{(n+1)^{kc}}<\infty$ whenever $kc>1$, i.e., $k>\frac{1}{c}$. So, 
	in view of Example \ref{E.2} with $a:=e^k$, for $k\in\N$, (cf. also the results in \S\ref{lpv}), we have  for each $k\in\N$ with $k>k_0:=[\frac{1}{c}]+1$ that 
	\begin{equation}\label{eq.K}
	\sigma(B(r,s),l_2(v_k))=	\sigma_r(B(r,s),l_2(v_k))=\overline{\D}_{|s|}(r)
		\end{equation}
		and
		\begin{equation}\label{eq.K1}
	\sigma_p(B(r,s),l_2(v_k))=\sigma_c(B(r,s),l_2(v_k))=\emptyset.
		\end{equation}
	Since $\Lambda_\infty(\alpha)=\cap_{k\geq k_0}l_2(v_k)$, we can apply  Lemma \ref{L32}(i) to obtain that 
	\[
	\sigma_p(B(r,s),\Lambda_\infty(\alpha))=\emptyset,\quad \sigma(B(r,s),\Lambda_\infty(\alpha))\subseteq \overline{\D}_{|s|}(r).
	\]
	To show that $\overline{\D}_{|s|}(r)\subseteq\sigma(B(r,s),\Lambda_\infty(\alpha))$, let $\lambda\in \overline{\D}_{|s|}(r)$ be fixed and suppose that $\lambda\not\in \sigma(B(r,s),\Lambda_\infty(\alpha))$. Then there exists $(B(r,s)-\lambda I)^{-1}\in \cL(\Lambda_\infty(\alpha))$, i.e., the continuous linear operator $(B(r,s)-\lambda I)\colon \Lambda_\infty(\alpha)\to \Lambda_\infty(\alpha) $ is bijective. Thus, for each $n\in\N_0$ there exists $y_n\in\Lambda_\infty(\alpha)$ such that $(B(r,s)-\lambda I)(y_n)=e_n$. It  follows for any $k\in\N$ with $k\geq k_0$ that $\{y_n\colon n\in\N_0\}\subseteq l_2(v_k)$ and so, $(B(r,s)-\lambda I)({\rm span} \{y_n\colon n\in\N_0\})={\rm span}\{e_n\colon n\in\N_0\}$ in $l_2(v_k)$. Accordingly, we obtain that
	\[
{\rm span}\{e_n\colon n\in\N_0\}=(B(r,s)-\lambda I)({\rm span} \{y_n\colon n\in\N_0\})\subseteq (B(r,s)-\lambda I)(l_2(v_k)).
	\]
	Since $\overline{{\rm span}\{e_n\colon n\in\N_0\}}^{l_2(v_k)}=l_2(v_k)$, 
	this implies that 
	\[
	l_2(v_k)=\overline{{\rm span}\{e_n\colon n\in\N_0\}}^{l_2(v_k)}\subseteq  \overline{(B(r,s)-\lambda I)(l_2(v_k))}^{l_2(v_k)}\subseteq l_2(v_k);
	\]
hence $\overline{(B(r,s)-\lambda I)(l_2(v_k))}^{l_2(v_k)}=l_2(v_k)$. This is a contradiction with the fact that  $\lambda\in \sigma_r(B(r,s),l_2(v_k))$ (cf. \eqref{eq.K}). Thus, we can conclude that $\lambda\in \sigma(B(r,s),\Lambda_\infty(\alpha))$ and hence that $\sigma(B(r,s),\Lambda_\infty(\alpha))=\overline{\D}_{|s|}(r)$.  In view of \eqref{eq.K}, it follows via Lemma \ref{L32}(ii)  that $\sigma^*(B(r,s),\Lambda_\infty(\alpha))=\sigma(B(r,s),\Lambda_\infty(\alpha))$.

Finally, let $\lambda\in \sigma(B(r,s),\Lambda_\infty(\alpha))=\overline{\D}_{|s|}(r)$. Suppose that $\lambda\in \sigma_c(B(r,s),\Lambda_\infty(\alpha))$, i.e.,  that
 $
\overline{(B(r,s)-\lambda I)(\Lambda_\infty(\alpha))}^{\Lambda_\infty(\alpha)}=\Lambda_\infty(\alpha)$. 
Since  $X:={\rm span}\{e_n\colon n\in\N_0\}$ is a dense subspace of   $\Lambda_\infty(\alpha)$ and $\overline{(B(r,s)-\lambda  I)(X)}^{\Lambda_\infty(\alpha)}=\overline{(B(r,s)-\lambda  I)(\overline{X}^{\Lambda_\infty(\alpha)})}^{\Lambda_\infty(\alpha)}$, it follows that
\begin{equation}\label{eq.NN}
\overline{(B(r,s)-\lambda  I)({\rm span}\{e_n\colon n\in\N_0\})}^{\Lambda_\infty(\alpha)}=\overline{(B(r,s)-\lambda I)(\Lambda_\infty(\alpha))}^{\Lambda_\infty(\alpha)}=\Lambda_\infty(\alpha).
\end{equation}
On the other hand,   $\overline{{\rm span}\{e_n\colon n\in\N_0\}}^{l_2(v_k)}=l_2(v_k)$ and $\overline{\Lambda_\infty(\alpha)}^{l_2(v_k)}=l_2(v_k)$ for every $k\in\N$. So, by \eqref{eq.NN} we obtain that
$$
\overline{(B(r,s)-\lambda  I)({\rm span}\{e_n:\ n\in\N_0\})}^{l_2(v_k)}=l_2(v_k)
$$
for every $k\in\N$. As $\sigma_{p}(B(r,s), l_2(v_k))=\emptyset$, it follows that $\lambda\in \sigma_c(B(r,s),l_2(v_k))$ for every $k\in\N$; this is a contradiction whenever $k\geq k_0$ (cf. \eqref{eq.K1}). So, we can conclude that $\lambda\in \sigma_r(B(r,s),\Lambda_\infty(\alpha))$, as $\sigma_p(B(r,s),\Lambda_\infty(\alpha))=\emptyset$. This also shows that $\sigma_c(B(r,s),\Lambda_\infty(\alpha))=\emptyset$.

(2) For each $k\in\N$ we have that  $\lim_{n\to\infty}\frac{v_k(n+1)}{v_k(n)}=\lim_{n\to\infty}e^{k(\alpha_{n+1}-\alpha_n)}=e^{kl}$. Therefore,  $\max\lim_{n\to\infty}\frac{v_k(n+1)}{v_k(n)}=\min\lim_{n\to\infty}\frac{v_k(n+1)}{v_k(n)}=e^{kl}$ for every $k\in\N$. So, 
in view of Example \ref{E.2} with $a:=e^k$, for $k\in\N$, (cf. also the results in \S\ref{lpv}), we have  for each $k\in\N$  that 
\begin{equation}\label{eq.KK}
	\sigma(B(r,s),l_2(v_k))=	\overline{\D}_{e^{kl}|s|}(r), \ {\D}_{e^{kl}|s|}(r)\subseteq  \sigma_r(B(r,s),l_2(v_k))\subseteq \overline{\D}_{e^{kl}|s|}(r)
\end{equation}
and
\begin{equation}\label{eq.KK1}
	\sigma_p(B(r,s),l_2(v_k))=\emptyset.
\end{equation}
Since $\Lambda_\infty(\alpha)=\cap_{k\in\N}l_2(v_k)$, we can apply  Lemma \ref{L32}(i) to obtain that 
\[
\sigma_p(B(r,s),\Lambda_\infty(\alpha))=\emptyset,\quad \sigma(B(r,s),\Lambda_\infty(\alpha))\subseteq \cup_{k\in\N}\overline{\D}_{e^{kl}|s|}(r)=\C.
\]
The claim is that  $\C=\sigma(B(r,s),\Lambda_\infty(\alpha))$. To show the claim, let $\lambda\in \C$ be fixed and suppose that $\lambda\not\in \sigma(B(r,s),\Lambda_\infty(\alpha))$. Now, by arguing as in point (i), we obtain  that $\overline{(B(r,s)-\lambda I)(l_2(v_k))}^{l_2(v_k)}=l_2(v_k)$ for every $k\in\N$. This is a contradiction. Indeed,  we also have that
 $\cup_{k\in\N}{\D}_{e^{kl}|s|}(r)=\C$ and hence  there exists $k\in\N$ such that $\lambda\in {\D}_{e^{kl}|s|}(r)= \sigma_r(B(r,s),l_2(v_k))$ (cf. \eqref{eq.KK}, \eqref{eq.KK1}). So, the claim is proved. 
 
 Since $\cup_{k\in\N}\overline{\D}_{e^{kl}|s|}(r)=\C=\sigma(B(r,s),\Lambda_\infty(\alpha))$, in view of  Lemma \ref{L32}(ii) we can conclude that $\sigma^*(B(r,s),\Lambda_\infty(\alpha))=\sigma(B(r,s),\Lambda_\infty(\alpha))$.
 
 To show that $\sigma_r(B(r,s),\Lambda_\infty(\alpha))=\sigma(B(r,s),\Lambda_\infty(\alpha))=\C$, we suppose that $\lambda\in\sigma_c(B(r,s),\Lambda_\infty(\alpha))$ for some $\lambda\in\C$. Now, by arguing again as in point (i), we obtain that $\lambda\in \sigma_c(B(r,s), l_2(v_k))$ for every $k\in\N$. But from $\C=\cup_{k\in\N}\D_{e^{kl}|s|}(r)=\cup_{k\in\N}\sigma_r(B(r,s),l_2(v_k))$ it follows that $\lambda \in \sigma_r(B(r,s),l_2(v_{k_0}))$ for some $k_0\in\N$. But this is a contradiction.
	\end{proof}
	
	\begin{rem}\label{R_Power} Let $\alpha=\{\alpha_n\}_{n\in\N_0}$ be a non-negative increasing sequence satisfying the conditions $\lim_{n\to\infty}\alpha_n=\infty$ and $\lim_{n\to\infty}(\alpha_{n+1}-\alpha_n)=:l\in [0,\infty)$. Then by the Stolz-Ces\`aro 
		criterion (see \cite[Ch. 3, Theorem 1.22]{Mu}) it follows that $\lim_{n\to\infty}\frac{\alpha_n}{n}=l$. Therefore, $\lim_{n\to\infty}\sqrt[n]{e^{kln-k\alpha_n}}=\lim_{n\to\infty}e^{kl-k\frac{\alpha_n}{n}}=1$ for every $k\in\N$. Accordingly,  the root test cannot provide any information regarding the convergence of the series $\sum_{n=0}^\infty e^{kln-k\alpha_n}$, for $k\in\N$ (cf. Example \ref{E.2}). 
		 Hence, the further hypothesis on $\alpha$, i.e., $\alpha_n\geq c \log(n+1)$ for every $n\in\N_0$ and some $c>0$,  has been necessary to ensure the convergence of the series $\sum_{n=0}^\infty e^{kln-k\alpha_n}$ when $l=0$, for $k$ large enough, and hence to describe $\sigma_r(B(r,s), l_2(v_k))$.
		  If $l>0$, the condition $\lim_{n\to\infty}(\alpha_{n+1}-\alpha_n)=l$ implies that $\lim_{n\to\infty}\frac{\log (n+1)}{\alpha_n}=0$ thanks to the Stolz-Ces\`aro
		 criterion. Therefore, in this case, the condition  $\alpha_n\geq c \log(n+1)$ for every $n\in\N_0$ and some $c>0$, is clearly satisfied.
		 
		 Examples of sequences satisfying such an additional condition are  $\{\log (n+1)\}_{n\in\N_0}$ and  $\{n+1\}_{n\in\N_0}$. 
		\end{rem}
	
We now pass to determine the spectra of $B(r,s)$ acting on $\Lambda_\infty(\alpha)'$. In order to do this, we first give this other result, which is proved in \cite[Lemma 5.2]{ABR-3}.

\begin{lem}\label{L33}
 Let $X = \cup_{k\in\N}  X_k$ be an Hausdorff inductive limit of a sequence of Banach spaces
$(X_k,\|\cdot\|_k)_{k\in\N}$. Let $T\in \cL(X)$ satisfy the following condition:

(B) For each $k\in\N$ the restriction $T_k$ of $T$ to $X_k$ maps $X_k$ into itself and $T_k \in\cL(X_k)$.

Then the following properties are satisfied:
\begin{itemize}
\item[\rm (i)] $\sigma_p (T, X) = \cup_{k\in\N}\sigma_p (T_k, X_k)$;
\item[\rm (ii)] If $\cup_{k=m}^\infty \sigma(T_k, X_k)\subseteq  \sigma(T,X)$ for some $m\in\N$, then $\sigma^*(T , X) = \overline{\sigma(T , X)}$;
\item[\rm (iii)] $\sigma (T, X) \subseteq \cap_{m\in\N}(\cup_{k=m}^\infty \sigma(T_k, X_k))$.
\end{itemize}
\end{lem}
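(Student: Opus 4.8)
The plan is to reduce everything to two standard facts about the locally convex inductive limit $X=\cup_{k}X_k$. First, its universal property: a linear map from $X$ into a locally convex space is continuous if and only if its restriction to each step $X_k$ is continuous, and, writing $B_k(\varepsilon):=\{x\in X_k\colon\|x\|_k<\varepsilon\}$, the absolutely convex hulls $\Gamma\bigl(\cup_k B_k(\varepsilon_k)\bigr)$, $\varepsilon_k>0$, form a $0$-neighbourhood base of $X$. Second, the steps are compatible: since the restriction of $T$ to $X_k$ is $T_k$ and $X_k\subseteq X_j$ for $j\geq k$, one has $T_j|_{X_k}=T_k$; hence, if $\lambda\in\rho(T_k,X_k)$ for every $k\geq m$, then $(T_j-\lambda I)^{-1}|_{X_k}=(T_k-\lambda I)^{-1}$ for all $j\geq k\geq m$, so these inverses fit together consistently across the steps.

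Parts (i) and (iii) are soft consequences. For (i): if $0\neq x\in X$ satisfies $(T-\lambda I)x=0$, then $x\in X_k$ for some $k$ and $(T_k-\lambda I)x=(T-\lambda I)x=0$, so $\lambda\in\sigma_p(T_k,X_k)$; conversely an eigenvector of $T_k$ is an eigenvector of $T$. For (iii) it suffices to prove $\cup_m\bigl(\cap_{k\geq m}\rho(T_k,X_k)\bigr)\subseteq\rho(T,X)$. Fix such a $\lambda$ with witness $m$ and define $S\colon X\to X$ by $Sy:=(T_k-\lambda I)^{-1}y$ when $y\in X_k$, $k\geq m$ (well defined by the compatibility fact). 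Any given element of $X$ lies in some $X_k$ with $k\geq m$, so injectivity and surjectivity of $T-\lambda I$ on $X$ reduce to those of $T_k-\lambda I$ on $X_k$, whence $S=(T-\lambda I)^{-1}$; and $S$ is continuous by the universal property, because $S|_{X_k}$ is $(T_k-\lambda I)^{-1}$ followed by the inclusion $X_k\hookrightarrow X$ when $k\geq m$, and $(T_m-\lambda I)^{-1}\circ\iota_{k,m}$ followed by $X_m\hookrightarrow X$ when $k<m$, where $\iota_{k,m}\colon X_k\hookrightarrow X_m$ is the continuous inclusion. Hence $\lambda\in\rho(T,X)$.

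For (ii), the inclusion $\overline{\sigma(T,X)}\subseteq\sigma^*(T,X)$ is automatic since $\sigma(T,X)\subseteq\sigma^*(T,X)$ and $\sigma^*(T,X)$ is closed, so I would prove $\C\setminus\overline{\sigma(T,X)}\subseteq\rho^*(T,X)$. Fix $\lambda\notin\overline{\sigma(T,X)}$ and $\delta>0$ with $\overline{\D_\delta(\lambda)}\cap\overline{\sigma(T,X)}=\emptyset$. The hypothesis $\sigma(T_k,X_k)\subseteq\sigma(T,X)$ for $k\geq m$ gives $\overline{\D_\delta(\lambda)}\subseteq\rho(T_k,X_k)$ for all such $k$, hence $\overline{\D_\delta(\lambda)}\subseteq\rho(T,X)$ by (iii), and, arguing as in (iii), $R(\mu,T)|_{X_k}=R(\mu,T_k)$ for $k\geq m$ and $\mu\in\D_\delta(\lambda)$. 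Since $X_k$ is Banach, $\mu\mapsto R(\mu,T_k)$ is norm-continuous near the compact set $\overline{\D_\delta(\lambda)}$, so $c_k:=\sup_{\mu\in\D_\delta(\lambda)}\|R(\mu,T_k)\|_{\cL(X_k)}<\infty$ for each $k\geq m$ — and finiteness for each fixed $k$, without any uniformity in $k$, is all that is available and all that is needed. To see that $\{R(\mu,T)\colon\mu\in\D_\delta(\lambda)\}$ is equicontinuous, take a $0$-neighbourhood $\Gamma\bigl(\cup_k B_k(\varepsilon_k)\bigr)$ of $X$; set $\eta_k:=\varepsilon_k/c_k$ for $k\geq m$, and for $k<m$ choose $\eta_k>0$ with $\iota_{k,m}(B_k(\eta_k))\subseteq B_m(\varepsilon_m/c_m)$. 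Then $\Gamma\bigl(\cup_k B_k(\eta_k)\bigr)$ is a $0$-neighbourhood of $X$, and for every $\mu\in\D_\delta(\lambda)$ we have $R(\mu,T)(B_k(\eta_k))=R(\mu,T_k)(B_k(\eta_k))\subseteq B_k(\varepsilon_k)$ for $k\geq m$ and $R(\mu,T)(B_k(\eta_k))=R(\mu,T_m)(\iota_{k,m}(B_k(\eta_k)))\subseteq R(\mu,T_m)(B_m(\varepsilon_m/c_m))\subseteq B_m(\varepsilon_m)$ for $k<m$, so by linearity $R(\mu,T)\bigl(\Gamma(\cup_k B_k(\eta_k))\bigr)\subseteq\Gamma\bigl(\cup_k B_k(\varepsilon_k)\bigr)$. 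Hence $\lambda\in\rho^*(T,X)$.

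The only genuine obstacle is the equicontinuity step in (ii): on a general (LB)-space $\rho(T,X)$ need not be open and the resolvent need not be locally equicontinuous, so one cannot argue purely formally — the argument really uses that the steps are Banach (to produce the scalar bounds $c_k$ from norm-continuity of the Banach-space resolvent) together with the explicit shape of $0$-neighbourhoods in the locally convex inductive limit, the point being that the $c_k$ may be absorbed one step at a time even if they blow up as $k\to\infty$.
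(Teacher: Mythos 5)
Your argument is correct and complete: (i) and (iii) are handled by the standard compatibility observations (restrictions of $T$ agree on overlapping steps, hence the local inverses $(T_k-\lambda I)^{-1}$ glue to a map that is continuous by the universal property of the inductive limit), and your treatment of (ii) correctly identifies and resolves the only delicate point, namely the equicontinuity of $\{R(\mu,T)\colon \mu\in\D_\delta(\lambda)\}$. Note, however, that the paper itself gives no proof of this lemma: it is quoted from \cite[Lemma 5.2]{ABR-3}. Your route to the equicontinuity differs from the one the authors use when they need the same kind of statement concretely (see the end of the proof of Theorem \ref{T.s1}(2)): there they obtain, for each fixed $x$, boundedness of $\{R(\mu,T)x\colon\mu\in\overline{\D}_\delta(\lambda)\}$ from the Banach-space resolvent bound on some step containing $x$, and then invoke barrelledness of the (LB)-space (Banach--Steinhaus) to pass from pointwise boundedness to equicontinuity. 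You instead exploit the explicit $0$-neighbourhood base $\Gamma\bigl(\cup_k B_k(\varepsilon_k)\bigr)$ of the locally convex inductive limit and absorb the step-wise resolvent bounds $c_k$ one step at a time (routing the steps $k<m$ through $X_m$), which gives a self-contained proof that never appeals to Banach--Steinhaus; since every (LB)-space is barrelled the two arguments have the same scope here, and yours has the small merit of making visible why no uniformity of the $c_k$ in $k$ is needed. One presentational caveat: the statement of Lemma \ref{L33} as printed does not repeat that the steps form an increasing sequence with continuous inclusions $X_k\hookrightarrow X_{k+1}$, and your proof (like the cited one) uses this; it is the intended hypothesis, as the parallel Lemma \ref{L35} makes explicit, so you should state it when you invoke $X_k\subseteq X_j$ for $j\geq k$ and the continuity of $\iota_{k,m}$.
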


\begin{thm}\label{T.s1} Let $\alpha=\{\alpha_n\}_{n\in\N_0}\subseteq \R$ be a non-negative increasing sequence satisfying  the conditions  $\lim_{n\to\infty}\alpha_n=\infty$ and  $\lim_{n\to\infty}(\alpha_{n+1}-\alpha_n)=:l\in [0,\infty)$. 
	 Then
	for each $r,s\in\R$ and $r,s\neq0$, the operator $B(r,s)\in \cL(\Lambda_\infty(\alpha)')$. Moreover, the following properties are satisfied.
	\begin{itemize} 
		\item[(1)] If $l=0$, then
	\begin{align}
	&	\sigma_p(B(r,s),\Lambda_\infty(\alpha)')=\emptyset,\,  \sigma^*(B(r,s), \Lambda_\infty(\alpha)')=\sigma(B(r,s),\Lambda_\infty(\alpha)')=\overline{\D}_{|s|}(r),\label{eq.SpSS} \\ & \quad\quad \sigma_r(B(r,s),\Lambda_\infty(\alpha)')=\D_{|s|}(r),\ \sigma_c(B(r,s),\Lambda_\infty(\alpha)')=\partial \D_{|s|}(r).\label{eq.SprcSS}
		\end{align} 
	\item[(2)] If $l\in (0,\infty)$, then
	\begin{align}
		&\sigma^*(B(r,s), \Lambda_\infty(\alpha))=\sigma(B(r,s), \Lambda_\infty(\alpha))=\sigma_r(B(r,s), \Lambda_\infty(\alpha))=\{r\}\label{eq.SF}\\
		&\quad\quad\sigma_p(B(r,s),\Lambda_\infty(\alpha)')=\sigma_c(B(r,s),\Lambda_\infty(\alpha)')=\emptyset. \label{eq.SFF}
		\end{align}
	\end{itemize}
	\end{thm}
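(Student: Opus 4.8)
The plan is to transfer the results of Theorem~\ref{T.s} to the strong dual $\Lambda_\infty(\alpha)'$ by duality, combined with the inductive‐limit structure expressed via Lemma~\ref{L33}. First I would record the structural facts: $\Lambda_\infty(\alpha)=\cap_{k\in\N}l_2(v_k)$ is a reflexive Fr\'echet Schwartz space whose strong dual is the complete reflexive (LB)-space $\Lambda_\infty(\alpha)'=\cup_{k\in\N}l_2(1/v_k)$, that $(\Lambda_\infty(\alpha)')'_\beta=\Lambda_\infty(\alpha)$, and that the action of $B(r,s)$ on $\Lambda_\infty(\alpha)'$ is precisely the adjoint $B(r,s)'$ of $B(r,s)\in\cL(\Lambda_\infty(\alpha))$, which on sequences is $x\mapsto(rx_n+sx_{n+1})_n$. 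Continuity of $B(r,s)$ on $\Lambda_\infty(\alpha)'$ is then immediate from Theorem~\ref{T.s} together with the general principle that the adjoint of a continuous operator between such spaces is continuous; alternatively one checks directly that $B(r,s)$ maps each $l_2(1/v_k)$ continuously into itself, using $v_k(n+1)/v_k(n)=e^{k(\alpha_{n+1}-\alpha_n)}$ bounded (so that condition (B) of Lemma~\ref{L33} holds).

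For the spectral identities in both cases I would invoke Lemma~\ref{L31}: since $\Lambda_\infty(\alpha)$ is reflexive and complete with complete strong dual, $\sigma(B(r,s)',\Lambda_\infty(\alpha)')=\sigma(B(r,s),\Lambda_\infty(\alpha))$ and likewise for $\sigma^*$. Combined with Theorem~\ref{T.s}, this instantly gives $\sigma^*(B(r,s),\Lambda_\infty(\alpha)')=\sigma(B(r,s),\Lambda_\infty(\alpha)')=\overline{\D}_{|s|}(r)$ when $l=0$ and $=\C$ when $l>0$. Wait --- here I must be careful: in case (2) the displayed equation \eqref{eq.SF} as written refers to $\Lambda_\infty(\alpha)$, not its dual; I will state it as $\sigma^*(B(r,s),\Lambda_\infty(\alpha)')=\sigma(B(r,s),\Lambda_\infty(\alpha)')=\C$ (consistent with Theorem~\ref{T.s}(2) via Lemma~\ref{L31}), and it is the point and continuous spectrum that require a genuine argument. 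For the point spectrum in all cases I would use Lemma~\ref{L33}(i): $\sigma_p(B(r,s),\Lambda_\infty(\alpha)')=\cup_{k}\sigma_p(B(r,s),l_2(1/v_k))$, and identify $\sigma_p(B(r,s),l_2(1/v_k))$ as the set of $\alpha$ for which the eigenvector $((\alpha-r)/s)^n$ (cf.\ \eqref{eq.aut} and Remark~\ref{oss1}, applied with weight $1/v_k$) lies in $l_2(1/v_k)$ --- equivalently $\sum_n |(\alpha-r)/s|^{2n} e^{-2k\alpha_n}<\infty$. When $l=0$ and $k>1/c$ this series diverges for every $\alpha\neq r$ just as in the proof of Theorem~\ref{T.s}, and at $\alpha=r$ the operator $B(0,s)$ is injective; hence $\sigma_p=\emptyset$. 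When $l>0$ one has $\lim_n\sqrt[n]{|(\alpha-r)/s|^2 e^{-2k\alpha_n/n}}=|(\alpha-r)/s|^2 e^{-2kl}$, which for fixed $\alpha$ is $<1$ for $k$ small but the root test is inconclusive only on a measure-zero circle; the key point is that since we are taking a \emph{union} over $k$, and for each $\alpha\neq r$ there exist \emph{large} $k$ with $e^{-kl}$ tiny forcing divergence --- so I would instead argue that $((\alpha-r)/s)^n\in l_2(1/v_k)$ for \emph{no} $k$ when $\alpha\neq r$ and for all $k$ when $\alpha=r$ is ruled out by injectivity of $B(0,s)$; thus again $\sigma_p(B(r,s),\Lambda_\infty(\alpha)')=\emptyset$.

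With $\sigma_p=\emptyset$ in hand, the decomposition $\sigma=\sigma_p\cup\sigma_r\cup\sigma_c$ reduces the problem to locating $\sigma_r$ and $\sigma_c$. For the residual spectrum I would exploit the duality relations recalled in Section~2: $\sigma_r(B(r,s)',\Lambda_\infty(\alpha)')\subseteq\sigma_p(B(r,s),\Lambda_\infty(\alpha))=\emptyset$? --- no, rather the correct direction is $\sigma_r(T,X)\subseteq\sigma_p(T',X')$ and $\{\lambda\in\sigma_p(T,X):\overline{(\lambda I-T)X}\neq X\}\subseteq\sigma_p(T',X')$; applied with $T=B(r,s)$ on $\Lambda_\infty(\alpha)$ and its bidual $\Lambda_\infty(\alpha)$, and recalling from Theorem~\ref{T.s} that in case $l=0$ every point of $\overline{\D}_{|s|}(r)$ is in $\sigma_r(B(r,s),\Lambda_\infty(\alpha))$, one gets $\overline{\D}_{|s|}(r)\subseteq\sigma_p(B(r,s)',\Lambda_\infty(\alpha)')$ --- contradicting $\sigma_p=\emptyset$ on the dual. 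The resolution is that the naive duality inclusions are subtler in the non-normable setting, so instead I would prove $\sigma_r(B(r,s),\Lambda_\infty(\alpha)')=\D_{|s|}(r)$ directly: for $\lambda\in\D_{|s|}(r)$ show $\overline{(B(r,s)-\lambda I)(\Lambda_\infty(\alpha)')}\neq\Lambda_\infty(\alpha)'$ by exhibiting a nonzero functional in $\Lambda_\infty(\alpha)=(\Lambda_\infty(\alpha)')'$ annihilating the range, namely the eigenvector $((\lambda-r)/s)^n$ of the adjoint $B(r,s)=B(r,s)''$ acting on $\Lambda_\infty(\alpha)$ --- which lies in $\Lambda_\infty(\alpha)$ precisely when $|\lambda-r|<|s|\cdot\min\lim e^{-?}$... this is where the hard work is, and I expect the main obstacle to be matching the boundary circle $\partial\D_{|s|}(r)$ correctly between $\sigma_r$ and $\sigma_c$, i.e.\ showing that on $\partial\D_{|s|}(r)$ the range is dense but not all of $\Lambda_\infty(\alpha)'$ (case $l=0$), and in case $l>0$ that everything collapses so that $\sigma_r=\C$, $\sigma_c=\emptyset$. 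For case $l>0$ the cleanest route is: $\sigma(B(r,s),\Lambda_\infty(\alpha)')=\C$, $\sigma_p=\emptyset$, and for every $\lambda$ one exhibits the annihilating eigenfunctional in $\Lambda_\infty(\alpha)$ (which always exists since $\min\lim v_k(n+1)/v_k(n)=e^{kl}\to\infty$), forcing $\sigma_r=\C$ and hence $\sigma_c=\emptyset$. Throughout, continuity and the $\sigma^*=\sigma$ statements follow formally from Lemma~\ref{L31} applied to the reflexive pair $(\Lambda_\infty(\alpha),\Lambda_\infty(\alpha)')$, once Theorem~\ref{T.s} is invoked.
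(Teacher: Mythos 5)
There is a genuine gap, and it is conceptual: you identify the operator $B(r,s)$ acting on $\Lambda_\infty(\alpha)'$ with the adjoint $B(r,s)'$ of $B(r,s)\in\cL(\Lambda_\infty(\alpha))$. It is not. The theorem concerns the same lower band matrix $x\mapsto(sx_{n-1}+rx_n)_{n\in\N_0}$, which happens to map each step $l_2\left(\frac{1}{v_k}\right)$, and hence the inductive limit $\Lambda_\infty(\alpha)'$, continuously into itself; the adjoint of $B(r,s)\in\cL(\Lambda_\infty(\alpha))$ is instead the transposed matrix $x\mapsto(rx_n+sx_{n+1})_{n\in\N_0}$. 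Consequently Lemma \ref{L31} does not give $\sigma(B(r,s),\Lambda_\infty(\alpha)')=\sigma(B(r,s),\Lambda_\infty(\alpha))$, and your conclusion in case $l>0$, namely $\sigma^*=\sigma=\C$ on the dual, is false: the correct value is $\{r\}$ (the missing primes in \eqref{eq.SF} are a typo in the statement, not in the value). The paper gets the upper bound from Lemma \ref{L33}(iii), $\sigma(B(r,s),\Lambda_\infty(\alpha)')\subseteq\cap_{m\in\N}\cup_{k\geq m}\overline{\D}_{e^{-kl}|s|}(r)=\{r\}$, and $r\in\sigma_r$ because $B(0,s)$ is injective with non-dense range. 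Your ``contradiction'' with the duality inclusions (that $\overline{\D}_{|s|}(r)$ would land in the point spectrum on the dual, against $\sigma_p=\emptyset$) is an artifact of the same misidentification: the operator with large point spectrum on $\Lambda_\infty(\alpha)'$ is the transpose, not $B(r,s)$; no non-normable subtlety needs resolving. Likewise your description of $\sigma_p(B(r,s),l_2(\frac{1}{v_k}))$ via the vectors $((\alpha-r)/s)^n$ is the eigenvalue equation of the transpose; for $B(r,s)$ itself the recursion $(r-\alpha)x_0=0$, $sx_{n-1}+(r-\alpha)x_n=0$ forces $x=0$ for every $\alpha$, so $\sigma_p=\emptyset$ trivially (and your claim that $\sum_n|(\alpha-r)/s|^{2n}e^{-2k\alpha_n}$ diverges for all $\alpha\neq r$ when $l=0$ is false: it converges on $\D_{|s|}(r)$).

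Even in case $l=0$, where your announced value $\overline{\D}_{|s|}(r)$ happens to be numerically correct, the route is invalid: the hard inclusion $\overline{\D}_{|s|}(r)\subseteq\sigma(B(r,s),\Lambda_\infty(\alpha)')$ cannot be read off from Lemma \ref{L31} applied to $B(r,s)$. The paper proves it by computing $\sigma_p(B'(r,s),\Lambda_\infty(\alpha))=\D_{|s|}(r)$ (Lemma \ref{L32}(iii) together with Remark \ref{oss1}, applied to the restrictions $B^*(r,s)\in\cL(l_2(v_k))$), transferring this into $\sigma(B(r,s),\Lambda_\infty(\alpha)')$ via Lemma \ref{L31} applied to $B'(r,s)$, and then capturing the boundary circle by showing that for $\lambda\in\partial\D_{|s|}(r)$ the operator $B(r,s)-\lambda I$ is injective with dense range on each $l_2(\frac{1}{v_k})$ and hence on the inductive limit, which yields $\lambda\in\sigma_c$; the splitting $\sigma_r=\D_{|s|}(r)$, $\sigma_c=\partial\D_{|s|}(r)$ then follows from $\sigma_r(B(r,s),\Lambda_\infty(\alpha)')=\sigma_p(B'(r,s),\Lambda_\infty(\alpha))$. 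You flag exactly these points (``this is where the hard work is'', the boundary circle) but do not carry them out, and you do not address $\sigma^*$ in case $l>0$ at all, where the paper needs a separate equicontinuity argument for the resolvents on the steps combined with barrelledness of $\Lambda_\infty(\alpha)'$ to obtain $\sigma^*=\{r\}$. As it stands, the proposal proves a wrong statement in case (2) and leaves the essential steps of case (1) open.
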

	
	\begin{proof}As was done in Theorem \ref{T.s}, we first observe that $\frac{\frac{1}{v_k(n+1)}}{\frac{1}{v_k(n)}}=\frac{v_k(n)}{v_k(n+1)}=e^{-k(\alpha_{n+1}-\alpha_n)}$ for each $k\in\N$ and $n\in\N_0$ and hence, for each $k\in\N$ we have $N_k':=e^{-k\inf_{n\in\N_0}(\alpha_{n+1}-\alpha_n)}<\infty$, as there exists $\lim_{n\to\infty}(\alpha_{n+1}-\alpha_n)=l\in [0,\infty)$. By Proposition \ref{contbrs} and Remark \ref{norm}, it follows for each $k\in\N$ that $B(r,s)\in \cL\left(l_2\left(\frac{1}{v_k}\right)\right)$ with operator norm less or equal to $|r|+N_k'|s|$. Since $\Lambda_\infty(\alpha)'=\cup_{k\in\N} l_2\left(\frac{1}{v_k}\right)$ is the inductive limit of the sequence $\left(l_2\left(\frac{1}{v_k}\right)\right)_{k\in\N}$ of Banach spaces, this implies that the operator $B(r,s)\in \cL(\Lambda_\infty(\alpha)')$. 
	
(1)	For each $k\in\N$  we have that $\lim_{n\to\infty}\frac{\frac{1}{v_k(n+1)}}{\frac{1}{v_k(n)}}=e^0=1$. Therefore,   $\min\lim_{n\to\infty}\frac{\frac{1}{v_k(n+1)}}{\frac{1}{v_k(n)}}=\max\lim_{n\to\infty}\frac{\frac{1}{v_k(n+1)}}{\frac{1}{v_k(n)}}=1$. On the other hand, the series $\sum_{n=0}^\infty e^{2k\alpha_n}$ clearly diverges for every $k\in\N$ (cf. Example \ref{E.2}). In view of Example \ref{E.2} with $a=e^{-k}$, for $k\in\N$, (cf. also the results in \S\ref{lpv}), it follows  for each $k\in\N$ that 
	\begin{equation}\label{eq.K2}
	\sigma_p \left(B(r,s),l_2\left(\frac{1}{v_k}\right)\right)=\emptyset,\ 	\sigma \left(B(r,s),l_2\left(\frac{1}{v_k}\right)\right)=\overline{\D}_{|s|}(r),
        \end{equation}
        and 
        \begin{equation}\label{eq.K3}
	\sigma_r\left(B(r,s),l_2\left(\frac{1}{v_k}\right)\right)=\D_{|s|}(r),\;\sigma_c\left(B(r,s),l_2\left(\frac{1}{v_k}\right)\right)=\partial\D_{|s|}(r).
        \end{equation}
	So, by Lemma \ref{L33}(i),(iii), we immediately obtain that 
	\[
	\sigma_p(B(r,s),\Lambda_\infty(\alpha)')=\emptyset,\quad \sigma(B(r,s),\Lambda_\infty(\alpha)')\subseteq \overline{\D}_{|s|}(r).
	\]
	To show that $\overline{\D}_{|s|}(r)\subseteq\sigma(B(r,s),\Lambda_\infty(\alpha)')$, we recall that $(\Lambda_\infty(\alpha)')'_\beta=\Lambda_\infty(\alpha)$ and so,  the adjoint (dual) operator $B'(r,s)$ of $B(r,s)\in\cL(\Lambda_\infty(\alpha)')$ belongs to $\cL(\Lambda_\infty(\alpha))$. Now, we note for every $k\in\N$ that the adjoint operator $B^*(r,s)\in \cL(l_2(v_k))$ of $B(r,s)\in \cL(l_2(\frac{1}{v_k}))$ when restricted to $\Lambda_\infty(\alpha)$ coincides with  $B'(r,s)$. Moreover, in view of  Theorem \ref{charac spect r} and \eqref{eq.K3}, we know that $\sigma_p (B^*(r,s),l_2(v_k))=\D_{|s|}(r)$ for every $k\in\N$. So, by Lemma \ref{L32}(iii) combined with Remark \ref{oss1} (i.e., dim Ker$(B^*(r , s)-\lambda I)=1$ if $\lambda\in \sigma_p(B^*(r , s),l_p^*(v_k))$), we get that $\sigma_p (B'(r,s),\Lambda_\infty(\alpha))=\D_{|s|}(r)$. Thanks to   Lemma \ref{L31}  this yields that 
	$$
	\D_{|s|}(r)\subseteq  \sigma (B'(r,s),
		\Lambda_\infty(\alpha))=\sigma (B''(r,s),\Lambda_\infty(\alpha)')=\sigma (B(r,s),\Lambda_\infty(\alpha)').
	$$
	Therefore, it remains to prove that $\partial \D_{|s|}(r)\subseteq  \sigma (B(r,s),\Lambda_\infty(\alpha)').$ So, let $\lambda\in\partial \D_{|s|}(r)$ be fixed. Then by \eqref{eq.K3} we have that $\lambda \in \sigma_c\left(B(r,s),l_2\left(\frac{1}{v_k}\right)\right)$ for every $k\in\N$. This means that $(B(r,s)-\lambda I)\colon l_2\left(\frac{1}{v_k}\right)\to l_2\left(\frac{1}{v_k}\right)$ is injective with dense range for every $k\in\N$. Due to the properties of the inductive topology, this implies that $(B(r,s)-\lambda I)\colon \Lambda_\infty(\alpha)'\to \Lambda_\infty(\alpha)'$ is also injective with dense range. The injectivity is obvious. If $y\in \Lambda_\infty(\alpha)'$, there exists $h\in\N$ such that $y\in l_2\left(\frac{1}{v_h}\right)$. Since $(B(r,s)-\lambda I)\colon l_2\left(\frac{1}{v_h}\right)\to l_2\left(\frac{1}{v_h}\right)$ has dense range,  there exists $(x_n)_{n\in\N_0}\in l_2\left(\frac{1}{v_h}\right)$ such that $(B(r,s)-\lambda I)(x_n)\to y$ in $l_2\left(\frac{1}{v_h}\right)$ as $n\to\infty$. Thus, $(x_n)_{n\in\N_0}$ belongs to $\Lambda_\infty(\alpha)'$ and $(B(r,s)-\lambda I)(x_n)\to y$ in $\Lambda_\infty(\alpha)'$  as $n\to\infty$. This shows that $\lambda \in \sigma_c(B(r,s),\Lambda_\infty(\alpha)')\subseteq \sigma(B(r,s),\Lambda_\infty(\alpha)')$. So, we can conclude that 
	 $$ 
	 \sigma(B(r,s),\Lambda_\infty(\alpha)')= \overline{\D}_{|s|}(r).
	 $$
	In view of \eqref{eq.K2}, by Lemma \ref{L33}(ii) it follows that  $\sigma^*(B(r,s),\Lambda_\infty(\alpha)')= \overline{\D}_{|s|}(r)$. Finally, $\sigma_r(B(r,s),\Lambda_\infty(\alpha)')=\sigma_p (B'(r,s),\Lambda_\infty(\alpha))=\D_{|s|}(r)$ as proved above, and so $\sigma_c(B(r,s),\Lambda_\infty(\alpha)')=\partial \D_{|s|}(r)$.
	
	(2) For each $k\in\N$  we have that $\lim_{n\to\infty}\frac{\frac{1}{v_k(n+1)}}{\frac{1}{v_k(n)}}=e^{-kl}$ and hence  $\min\lim_{n\to\infty}\frac{\frac{1}{v_k(n+1)}}{\frac{1}{v_k(n)}}=\max\lim_{n\to\infty}\frac{\frac{1}{v_k(n+1)}}{\frac{1}{v_k(n)}}=e^{-kl}$. In view of Example \ref{E.2} with $a=e^{-kl}$, for $k\in\N$, (cf. also the results in \S\ref{lpv}), it follows  for each $k\in\N$ that 
	\begin{equation}\label{eq.KK2}
		\sigma_p \left(B(r,s),l_2\left(\frac{1}{v_k}\right)\right)=\emptyset,\ 	\sigma \left(B(r,s),l_2\left(\frac{1}{v_k}\right)\right)=\overline{\D}_{e^{-kl}|s|}(r),
	\end{equation}
	and 
	\begin{equation}\label{eq.KK3}
	\D_{e^{-kl}|s|}(r)\subseteq 	\sigma_r\left(B(r,s),l_2\left(\frac{1}{v_k}\right)\right)\subseteq \overline{\D}_{e^{-kl}|s|}(r).
	\end{equation}
	So, by Lemma \ref{L33}(i),(iii), we immediately obtain that 
\[
\sigma_p(B(r,s),\Lambda_\infty(\alpha)')=\emptyset
\]
and that
\[
 \sigma(B(r,s),\Lambda_\infty(\alpha)')\subseteq\cap_{m\in\N}\cup_{k\geq m} \overline{\D}_{e^{-kl}|s|}(r)=\cap_{m\in\N}\overline{\D}_{e^{-ml}|s|}(r)=\{r\}.
\]
On the other hand, $r\in \sigma_r(B(r,s),\Lambda_\infty(\alpha)')\subseteq \sigma(B(r,s),\Lambda_\infty(\alpha)')$. So, we can conclude that $\sigma_r(B(r,s),\Lambda_\infty(\alpha)')= \sigma(B(r,s),\Lambda_\infty(\alpha)')=\{r\}$ and that $\sigma_c(B(r,s),\Lambda_\infty(\alpha)')=\emptyset$. It remains to show that $\sigma^*(B(r,s),\Lambda_\infty(\alpha)')=\{r\}$. To this end, let $\lambda\in\C\setminus\{r\}$ be fixed. Then $|\lambda-r|>0$ and hence there exists $k_0\in\N$ such that  $|\lambda-r|>2e^{-k_0l}|s|$,  as $e^{-kl}\to 0$ for $k\to\infty$. It follows that  $\overline{\D}_{e^{-k_0l}|s|}(\lambda)\subseteq  \rho\left(B(r,s),l_2\left(\frac{1}{v_{k}}\right)\right)$ for every $k\geq k_0$.  On the other hand, $\overline{\D}_{e^{-k_0l}|s|}(\lambda)\subseteq \rho(B(r,s),\Lambda_\infty(\alpha)')$, and for every $\mu\in\overline{\D}_{e^{-k_0l}|s|}$ and $k\geq k_0$,
 the operator $R(\mu,B(r,s))$ when  restricted to $l_2\left(\frac{1}{v_{k}}\right)$ coincides with the resolvent operator of $B(r,s)$ acting on $l_2\left(\frac{1}{v_{k}}\right)$, i.e., with $(B(r,s)-\mu I)^{-1}\colon l_2\left(\frac{1}{v_{k}}\right)\to l_2\left(\frac{1}{v_{k}}\right)$. Consequently, the family $\{R(\mu,B(r,s)):\ \mu\in \overline{\D}_{e^{-k_0l}|s|}(\lambda)\}$ forms an equicontinuous set in $\cL(\Lambda_\infty(\alpha)')$.
Indeed,  for a  fixed $x\in \Lambda_\infty(\alpha)'$, there exists $k\geq k_0$ such that $x\in  l_2\left(\frac{1}{v_{k}}\right)$. By the equicontinuity  of  resolvent operators over bounded closed subsets of the resolvent set of a continuous operator acting in a Banach space, the set $\{R(\mu,B(r,s))x:\ \mu\in \overline{\D}_{e^{-k_0l}|s|}(\lambda)\}$ is  bounded in $l_2\left(\frac{1}{v_{k}}\right)$ and hence also in $\Lambda_\infty(\alpha)'$. From the arbitrariness of $x$ and the fact that $\Lambda_\infty(\alpha)'$ is a barrelled space, it follows that $\{R(\mu,B(r,s)):\ \mu\in \overline{\D}_{e^{-k_0l}|s|}(\lambda)\}$ is an equicontinuous subset of $\cL(\Lambda_\infty(\alpha)')$.
\end{proof} 

We conclude this section by studying the ergodic properties of the operator
 $B(r,s)$ acting on the spaces $\Lambda_\infty(\alpha)$ and $\Lambda_\infty(\alpha)'$. 
 To this end, we first recall the following results, originally proved in \cite[Theorems 24 and 26]{AA}, now stated in the framework of Fr\'echet and (LB) spaces.

\begin{lem}\label{L34}
Let $X =\cap_{k\in\N}X_k$ be a Fr\'echet space, where $X_k$ is a Banach space such that the inclusion $X_{k+1}\hookrightarrow X_{k}$ is continuous, for every $k\in\N$. Let $T\in \cL(X)$ satisfy condition (A) of Lemma \ref{L32}. Then the following properties are satisfied.
\begin{itemize}
\item[\rm (i)] If $T_k:= T_{|X_k}$ is power bounded in $X_k$ for every $k\in\N$, then $T$ is power bounded in $X$.
\item[\rm(ii)] If $T_k$ is mean ergodic in $X_k$ for every $k\in\N$, then $T$ is mean ergodic in $X$.
\item[\rm (iii)] If $T_k$ is uniformly mean ergodic in $X_k$ for every $k\in\N$, then $T$ is mean ergodic in $X$.
\item[\rm (iv)] If $(T_k)^n\to 0$ in $\cL_b(X_k)$  as $n\to\infty$ for every $k\in\N$, then $T^n\to 0$  in $\cL_b(X)$ as $n\to\infty$.
\end{itemize}
\end{lem}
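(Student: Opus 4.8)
The plan is to reduce each of the four assertions to the corresponding property of the Banach space operators $T_k$, using the description of the topology of $X$. Recall (as in the setup of Lemma~\ref{L32}) that one may assume $\|x\|_k\le\|x\|_{k+1}$ for $x\in X_{k+1}$, so that $X$ carries the projective limit topology generated by the increasing sequence of norms $(\|\cdot\|_k)_{k\in\N}$ inherited from the $X_k$; consequently a sequence in $X$ converges in $X$ if, and only if, it converges in $X_k$ for every $k\in\N$, and a subset of $X$ is bounded if, and only if, it is norm bounded in $X_k$ for every $k\in\N$. The elementary observation underlying everything is that condition (A) gives $T_kx=Tx$ for $x\in X$, hence $(T_k)^mx=T^mx$ for all $m\in\N_0$ and $x\in X$ by iteration, and therefore $(T_k)_{[n]}x=T_{[n]}x$ for all $n\in\N$ and $x\in X$.

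For (i), if $M_k:=\sup_{n\in\N_0}\|(T_k)^n\|_{\cL(X_k)}<\infty$ for every $k$, then $\|T^nx\|_k=\|(T_k)^nx\|_k\le M_k\|x\|_k$ for all $n\in\N_0$ and $x\in X$, so $\{T^n\}_{n\in\N_0}$ is equicontinuous in $\cL(X)$ and $T$ is power bounded. For (iv), fix a bounded set $B\subseteq X$ and $k\in\N$; then $C_k:=\sup_{x\in B}\|x\|_k<\infty$ and $\sup_{x\in B}\|T^nx\|_k\le\|(T_k)^n\|_{\cL(X_k)}\,C_k\to0$ as $n\to\infty$ by hypothesis, and since $k$ is arbitrary this says exactly that $T^n\to0$ in $\cL_b(X)$.

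For (ii), assume each $T_k$ is mean ergodic and fix $x\in X$. For each $k$ the sequence $(T_{[n]}x)_{n}=((T_k)_{[n]}x)_{n}$ converges in $X_k$, say to $y_k\in X_k$. For $k\le k'$ the inclusion $X_{k'}\hookrightarrow X_k$ is continuous, so the same sequence converges to $y_{k'}$ in $X_k$ as well, and uniqueness of limits in the Hausdorff space $X_k$ forces $y_{k'}=y_k$. Hence there is a single $y\in\bigcap_{k\in\N}X_k=X$ with $T_{[n]}x\to y$ in $X_k$ for every $k$, that is, $T_{[n]}x\to y$ in $X$; the resulting pointwise limit $P$ of $(T_{[n]})_n$ is linear and, since $X$ is barrelled, belongs to $\cL(X)$, so $T$ is mean ergodic. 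Assertion (iii) follows at once, because a uniformly mean ergodic operator on a Banach space is mean ergodic, so its hypothesis implies that of (ii).

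I expect the only genuinely non-routine point to be the gluing step in (ii): one must verify that the limits $y_k$ produced in the different Banach spaces $X_k$ coincide, which is precisely where the compatibility encoded by the continuous inclusions $X_{k+1}\hookrightarrow X_k$ (together with the Hausdorff property of each $X_k$) is used; once this is settled, convergence in every $X_k$ is, by the very definition of the projective limit topology, convergence in $X$. I would also remark explicitly that (iii) only yields mean ergodicity, and not uniform mean ergodicity of $T$ on $X$: the estimates in (i) and (iv) are uniform in $x$ but not a priori in the index $k$, and uniform mean ergodicity does not in general pass from the Banach steps $X_k$ to the Fr\'echet projective limit.
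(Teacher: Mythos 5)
Your proof is correct. The paper itself gives no proof of Lemma \ref{L34}: it is recalled from \cite[Theorems 24 and 26]{AA}, with a separate remark that statement (iv) follows by arguing as for statement (iii) there; your direct reduction to the Banach steps --- the identity $(T_k)^nx=T^nx$ for $x\in X$, the gluing of the limits $y_k$ through the continuous inclusions together with the Hausdorff property, and Banach--Steinhaus (barrelledness of the Fr\'echet space $X$) to get continuity of the limit projection --- is precisely the standard argument behind the cited results, and, as you implicitly show, it does not even need the monotonicity of the norms, only the continuity of the inclusions and the projective description of the topology and of the bounded sets of $X$.
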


We recall that an  (LB)-space $X = \cup_{k\in\N}  X_k$ is said to be \textit{regular} if every bounded subset $B$ of $X$ is contained
and bounded in $X_k$, for some $k\in\N$. Every complete (LB)-space is regular.

\begin{lem}\label{L35}
 Let $X = \cup_{k\in\N}  X_k$ be an Hausdorff inductive limit of a sequence of Banach spaces 
such that the inclusion $X_k\hookrightarrow X_{k+1}$ is continuous, for every $k\in\N$ (i.e., an (LB)-space) and let $T\in \cL(X)$ satisfy condition (B) of Lemma \ref{L33}. Then the following properties are satisfied.
\begin{itemize}
\item[\rm (i)] If $T_k:= T_{|X_k}$ is power bounded in $X_k$ for every $k\in\N$, then $T$ is power bounded in $X$.
\item[\rm (ii)] If $T_k$ is mean ergodic in $X_k$ for every $k\in\N$, then $T$ is mean ergodic in $X$.
\item[\rm (iii)] If $T_k$ is uniformly mean ergodic in $X_k$ for every $k\in\N$ and $X$ is regular, then $T$ is uniformly mean ergodic in $X$.
\item[\rm (iv)] If $(T_k)^n\to 0$ in $\cL_b(X_k)$  as $n\to\infty$ for every $k\in\N$ and $X$ is regular, then $T^n\to 0$  in $\cL_b(X)$ as $n\to\infty$.
\end{itemize}
\end{lem}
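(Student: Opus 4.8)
The plan is to derive every assertion from the corresponding property of the Banach-space operators $T_k$, using only two structural features of the (LB)-space $X=\cup_{k\in\N}X_k$: first, each $x\in X$ lies in some step $X_k$; second, when $X$ is regular, each bounded subset of $X$ is contained and bounded in some step $X_k$. Throughout I would record that condition (B) of Lemma~\ref{L33} gives $T^m|_{X_k}=(T_k)^m$, hence $T_{[n]}|_{X_k}=(T_k)_{[n]}$, and that the inclusions $X_k\hookrightarrow X$ are continuous, so they send bounded sets to bounded sets and turn convergence in $X_k$ into convergence in $X$.

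\emph{Parts (i) and (ii).} For (i), fix $x\in X$ and pick $k$ with $x\in X_k$; since $T_k$ is power bounded, the orbit $\{T^nx\}_{n\in\N_0}=\{(T_k)^nx\}_{n\in\N_0}$ is bounded in $X_k$, hence bounded in $X$. Thus $\{T^n\}_{n\in\N_0}$ is pointwise bounded on $X$, and because every (LB)-space is barrelled, the Banach--Steinhaus theorem makes $\{T^n\}_{n\in\N_0}$ equicontinuous, i.e., $T$ is power bounded in $X$. (Equivalently, one may note that a family in $\cL(X)$ is equicontinuous precisely when all its restrictions to the steps $X_k$ are equicontinuous into $X$, and $\{(T_k)^n\}_n$, being equicontinuous in $\cL(X_k)$, stays so after composition with the fixed continuous inclusion.) For (ii), fix again $x\in X_k$: mean ergodicity of $T_k$ makes $T_{[n]}x=(T_k)_{[n]}x$ converge in $X_k$, hence in $X$. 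So $\{T_{[n]}\}_n$ is pointwise bounded and pointwise convergent on $X$; by barrelledness it is equicontinuous, so its pointwise limit $P$ lies in $\cL(X)$ and $T_{[n]}\to P$ in $\cL_s(X)$, i.e., $T$ is mean ergodic.

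\emph{Parts (iii) and (iv).} Here regularity is the decisive hypothesis. Let $B\subseteq X$ be bounded; regularity gives $k$ with $B$ contained and bounded in $X_k$. For (iv), $(T_k)^n\to0$ in $\cL_b(X_k)$ means $T^n\to0$ uniformly on $B$ in the topology of $X_k$, and continuity of $X_k\hookrightarrow X$ then yields $T^n\to0$ uniformly on $B$ in the topology of $X$; since $B$ was an arbitrary bounded set, $T^n\to0$ in $\cL_b(X)$. For (iii), uniform mean ergodicity of each $T_k$ in particular forces each $T_k$ to be mean ergodic, so by (ii) $T$ is mean ergodic with $\cL_s(X)$-limit $P$; for the chosen $k$ the sequence $(T_k)_{[n]}=T_{[n]}|_{X_k}$ converges in $\cL_b(X_k)$, and its limit there coincides with $P|_{X_k}$ by uniqueness of pointwise limits. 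Transporting this $\cL_b(X_k)$-convergence along the continuous inclusion shows $T_{[n]}\to P$ uniformly on $B$; letting $B$ range over the bounded subsets of $X$ gives $T_{[n]}\to P$ in $\cL_b(X)$, so $T$ is uniformly mean ergodic.

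The argument is largely bookkeeping; the two points to watch are the appeal to barrelledness of (LB)-spaces (equivalently, to the description of equicontinuous subsets of $\cL(X)$ for locally convex inductive limits) in (i)--(ii), and the essential role of regularity in (iii)--(iv), which is what lets one confine a prescribed bounded set to a single step $X_k$ --- without it a bounded subset of $X$ need not be bounded in any $X_k$ and the reduction fails. This parallels the Fr\'echet-space statement in Lemma~\ref{L34} and the original results of \cite[Theorems 24 and 26]{AA}.
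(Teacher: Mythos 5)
Your proof is correct and follows essentially the same line as the argument the paper relies on: the paper itself only cites \cite[Theorems 24 and 26]{AA} for (i)--(iii) and remarks that (iv) is obtained analogously, and those arguments are exactly the ones you reconstruct --- restriction to the steps via condition (B), barrelledness of the (LB)-space plus Banach--Steinhaus for the pointwise statements (i)--(ii), and regularity to confine a bounded set to a single step $X_k$ for the uniform statements (iii)--(iv).
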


\begin{rem}  Statement (iv) in Lemmas \ref{L34} and \ref{L35} is not explicitly included in the cited results, but the proof proceeds analogously by arguing as in the case of the statement (iii) in \cite[Theorems 24 and 26]{AA}. 
\end{rem}

Taking into account that $\Lambda_\infty(\alpha)$ is a Fr\'echet space and that $\Lambda_\infty(\alpha)'$ is a complete (LB)-space, we can give the following results.

\begin{prop}\label{P.UM_0} Let $\alpha=\{\alpha_n\}_{n\in\N_0}\subseteq \R$ be a non-negative increasing sequence satisfying  the conditions $\lim_{n\to\infty}\alpha_n=\infty$,  $\lim_{n\to\infty}(\alpha_{n+1}-\alpha_n)=0$,  and $\alpha_n\geq c\log(n+1)$ for every $n\in\N_0$ and some $c>0$.
	Let $r,s\in\R$ and $r,s\neq0$.
	
{\rm (a)}	Consider the following statements:
\begin{enumerate}
\item $B(r,s)$ is mean ergodic  in $\cL(\Lambda_\infty(\alpha))$; 
\item $\frac{B^n(r,s)}{n}\to0$ in $\cL_s(\Lambda_\infty(\alpha))$ as $n\to\infty$;
\item $|r|+|s|\leq 1$.
\end{enumerate}
Then $(1)\Rightarrow(2)\Rightarrow(3)$.

{\rm (b)} Moreover, consider the following statements:
\begin{enumerate}
\item $|r|+|s|< 1$;
\item $B^n(r,s)\to0$ in $\cL_b(\Lambda_\infty(\alpha))$  as $n\to\infty$;
\item $B(r,s)$ is power bounded in $\cL(\Lambda_\infty(\alpha))$;
\item $B(r,s)$ is uniformly mean ergodic in $\cL(\Lambda_\infty(\alpha))$;
\item $1\not\in\sigma(B(r,s),\Lambda_\infty(\alpha))$.
\end{enumerate}
Then $(1)\Rightarrow(2)\Rightarrow(3)\Rightarrow(4)\Rightarrow(5)$.

{\rm (c)} Finally,   $B(r,s)$ is not supercyclic.
\end{prop}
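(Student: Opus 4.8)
Throughout, the plan is to reduce each assertion to the Banach‑space facts already established for the spaces $l_2(v_k)$, $k\in\N$, with $\Lambda_\infty(\alpha)=\cap_{k\in\N}l_2(v_k)$, together with the Fr\'echet--Schwartz (hence Montel) nature of $\Lambda_\infty(\alpha)$; recall that $\lim_n(\alpha_{n+1}-\alpha_n)=0$ gives $\lim_n\frac{v_k(n+1)}{v_k(n)}=1$, so $L_k:=\max\lim_n\frac{v_k(n+1)}{v_k(n)}=1$ for every $k$, and, by the Stolz-Ces\`aro criterion, $\frac{\alpha_n}{n}\to0$.

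\emph{Part (a).} $(1)\Rightarrow(2)$ is immediate from \eqref{cesarof}: if $B_{[n]}(r,s)\to P$ in $\cL_s(\Lambda_\infty(\alpha))$, then $\frac{B^n(r,s)}{n}=B_{[n]}(r,s)-\frac{n-1}{n}B_{[n-1]}(r,s)\to0$ in $\cL_s(\Lambda_\infty(\alpha))$. For $(2)\Rightarrow(3)$ I would test against $e_0\in\Lambda_\infty(\alpha)$: from $B^n(r,s)e_0=\sum_{j=0}^n\binom{n}{j}r^{n-j}s^je_j$ and the Cauchy--Schwarz inequality,
\[
|B^n(r,s)e_0|_1^2\ \ge\ e^{2\alpha_0}\sum_{j=0}^n\binom{n}{j}^2|r|^{2(n-j)}|s|^{2j}\ \ge\ e^{2\alpha_0}\,\frac{(|r|+|s|)^{2n}}{n+1},
\]
so $\frac{|B^n(r,s)e_0|_1}{n}\to0$ is impossible if $|r|+|s|>1$, whence $|r|+|s|\le1$. (Alternatively, as in Proposition \ref{me}: $\frac{B^n(r,s)}{n}\to0$ in $\cL_s(\Lambda_\infty(\alpha))$ forces $\sigma(B(r,s),\Lambda_\infty(\alpha))\subseteq\overline{\D}$ --- every $\lambda$ therein being, by Theorem \ref{T.s}(1), an eigenvalue of the dual operator $B(r,s)'$, one evaluates $\frac{B^n(r,s)}{n}\to0$ against the corresponding eigenfunctional to get $|\lambda|\le1$ --- and then Lemma \ref{incspe} yields $|r|+|s|\le1$.)

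\emph{Part (b).} $(1)\Rightarrow(2)$: by Theorem \ref{charac spect} (cf.\ Example \ref{E.2} with $a=e^k$) the spectral radius of $B(r,s)$ on $l_2(v_k)$ equals $|r|+L_k|s|=|r|+|s|<1$, so $\|B^n(r,s)\|_{l_2(v_k)}\to0$ for every $k$, and Lemma \ref{L34}(iv) then gives $B^n(r,s)\to0$ in $\cL_b(\Lambda_\infty(\alpha))$. $(2)\Rightarrow(3)$: a sequence converging in $\cL_b(\Lambda_\infty(\alpha))$ is bounded there, so, $\Lambda_\infty(\alpha)$ being barrelled, $\{B^n(r,s)\}_n$ is equicontinuous, i.e.\ $B(r,s)$ is power bounded. $(3)\Rightarrow(4)$: $\Lambda_\infty(\alpha)$ being reflexive, the power bounded operator $B(r,s)$ is mean ergodic, $B_{[n]}(r,s)\to P$ in $\cL_s(\Lambda_\infty(\alpha))$; as $\{B_{[n]}(r,s)\}_n$ is equicontinuous and, $\Lambda_\infty(\alpha)$ being Montel, bounded sets are relatively compact, this equicontinuous pointwise convergent sequence converges uniformly on bounded sets, so $B_{[n]}(r,s)\to P$ in $\cL_b(\Lambda_\infty(\alpha))$, i.e.\ $B(r,s)$ is uniformly mean ergodic. $(4)\Rightarrow(5)$: by \eqref{cesarof}, uniform mean ergodicity gives $\frac{B^n(r,s)}{n}\to0$ in $\cL_b(\Lambda_\infty(\alpha))$, so by the Fr\'echet version of Lin's theorem (cf.\ \cite[Theorem 5.4]{ABR-0}) the range $(I-B(r,s))(\Lambda_\infty(\alpha))$ is closed and $\Lambda_\infty(\alpha)=\Ker(I-B(r,s))\oplus(I-B(r,s))(\Lambda_\infty(\alpha))$; since $\sigma_p(B(r,s),\Lambda_\infty(\alpha))=\emptyset$ by Theorem \ref{T.s}(1), the kernel is $\{0\}$, hence $I-B(r,s)$ is a continuous bijection and so an isomorphism by the open mapping theorem, i.e.\ $1\notin\sigma(B(r,s),\Lambda_\infty(\alpha))$.

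\emph{Part (c).} The dual operator $B(r,s)'$ of $B(r,s)\in\cL(\Lambda_\infty(\alpha))$ is the backward operator $(x_n)_n\mapsto(rx_n+sx_{n+1})_n$ on $\Lambda_\infty(\alpha)'=\cup_{k\in\N}l_2(\frac{1}{v_k})$; it restricts to a bounded operator on each $l_2(\frac{1}{v_k})$ (where it is the Banach adjoint of $B(r,s)\in\cL(l_2(v_k))$), and by \eqref{eq.aut} its eigenvector for $\lambda$ is $\{(\frac{\lambda-r}{s})^n\}_n$, which lies in $l_2(\frac{1}{v_k})$ whenever $|\lambda-r|<|s|$ (root test, using $\frac{\alpha_n}{n}\to0$). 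Hence Lemma \ref{L33}(i) yields $\D_{|s|}(r)\subseteq\sigma_p(B(r,s)',\Lambda_\infty(\alpha)')$, so the dual of $B(r,s)$ has infinitely many eigenvalues, and therefore $B(r,s)$ is not supercyclic on $\Lambda_\infty(\alpha)$ (the argument of \cite[Proposition 1.26]{BM} being valid in any separable locally convex space). The delicate point is the step $(3)\Rightarrow(4)$ of part (b): contrary to the Banach case, power boundedness must be promoted to \emph{uniform} mean ergodicity, and this rests essentially on the Montel property of $\Lambda_\infty(\alpha)$; the remaining implications are the arguments of Propositions \ref{me} and \ref{ume} transported through Lemmas \ref{L32}--\ref{L34}.
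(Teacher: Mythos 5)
Your parts (a) and (c), and the implications $(1)\Rightarrow(2)\Rightarrow(3)\Rightarrow(4)$ of part (b), are correct; indeed the direct estimate $|B^n(r,s)e_0|_1\ge e^{\alpha_0}(|r|+|s|)^n/\sqrt{n+1}$ in (a) and the explicit eigenvector computation in (c) are more elementary than the paper's route (which cites \cite[Proposition 5.1]{AM} and reads off $\sigma_p(B(r,s)',\Lambda_\infty(\alpha)')\supseteq\sigma_r(B(r,s),\Lambda_\infty(\alpha))$ from Theorem \ref{T.s}(1)), and your $(3)\Rightarrow(4)$ simply re-proves \cite[Proposition 2.8]{ABR-0}, which the paper cites. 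The genuine gap is the step $(4)\Rightarrow(5)$ of part (b): you invoke a ``Fr\'echet version of Lin's theorem'' to conclude that uniform mean ergodicity forces $(I-B(r,s))(\Lambda_\infty(\alpha))$ to be closed and $\Lambda_\infty(\alpha)=\Ker(I-B(r,s))\oplus (I-B(r,s))(\Lambda_\infty(\alpha))$. No such theorem is available in this generality: Lin's proof rests on a Neumann-series argument in the Banach algebra $\cL(X)$, which has no analogue for $\cL_b(\Lambda_\infty(\alpha))$, and the paper uses \cite[Theorem 5.4]{ABR-0} only in the Banach-space Propositions \ref{ume} and \ref{meulpv}. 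In fact the implication ``uniformly mean ergodic $\Rightarrow$ $(I-T)(X)$ closed'' is false precisely in the spaces covered by this proposition: on $X=\cs\simeq\Lambda_\infty((\log(n+1))_{n\in\N_0})$ take the diagonal operator $Tx:=((1-e^{-n-1})x_n)_{n\in\N_0}$. Then $T$ is power bounded and, since the $n$-th diagonal entry of $T_{[m]}$ is at most $\min(1,e^{n+1}/m)$, a split of the indices at any level $J$ shows $T_{[m]}\to0$ in $\cL_b(X)$, so $T$ is uniformly mean ergodic; yet $(I-T)(X)=\{y\in X\colon (e^{n+1}y_n)_{n\in\N_0}\in X\}$ is a dense proper, hence non-closed, subspace, and $X\neq\Ker(I-T)\oplus(I-T)(X)$.

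The conclusion itself is true, but it must be reached as in the paper, using the fine spectrum of $B(r,s)$ rather than a closed-range theorem: uniform mean ergodicity gives in particular mean ergodicity, hence $\overline{(I-B(r,s))(\Lambda_\infty(\alpha))}\oplus\Ker(I-B(r,s))=\Lambda_\infty(\alpha)$; since $\sigma_p(B(r,s),\Lambda_\infty(\alpha))=\emptyset$ by \eqref{eq.Spcs}, the kernel is trivial and so $I-B(r,s)$ has dense range, which excludes $1\in\sigma_r(B(r,s),\Lambda_\infty(\alpha))$; and since also $\sigma_c(B(r,s),\Lambda_\infty(\alpha))=\emptyset$ by \eqref{eq.Spcs}, while $\sigma=\sigma_p\cup\sigma_r\cup\sigma_c$, it follows that $1\notin\sigma(B(r,s),\Lambda_\infty(\alpha))$ (equivalently, $I-B(r,s)$ is bijective), with no appeal to closedness of the range or to the open mapping theorem. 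Replace your $(4)\Rightarrow(5)$ by this argument and the proof is complete.
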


\begin{proof} We first consider the case (a).
	
(1)$\Rightarrow$(2): It is straightforward.

(2)$\Rightarrow$(3): If $\frac{B^n(r,s)}{n}\to0$ in $\cL_s(\Lambda_\infty(\alpha))$ as $n\to\infty$, by \cite[Proposition 5.1]{AM} it follows  that $\sigma(B(r , s ), \Lambda_\infty(\alpha))\subseteq \overline{\D}$. In view of Theorem \ref{T.s}(1) and Lemma \ref{incspe}, we get that $|r|+|s|\leq 1$. 

We now pass to the case (b).

(1)$\Rightarrow$(2): We first recall that $\lim_{n\to\infty}\frac{v_k(n+1)}{v_k(n)}=1$ for every $k\in\N$. Therefore, by Proposition \ref{meulpv} it follows for each $k\in\N$  that $B^n(r,s)\to 0$ in $\cL_b(l_2(v_k))$ as $n\to\infty$. Since $\Lambda_\infty(\alpha)=\cap_{k\in\N}l_2(v_k)$ is the intersection of the sequence $(l_2(v_k))_{k\in\N}$ of Banach spaces and the assumption (A) in Lemma \ref{L32} is clearly satisfied, we can apply Lemma \ref{L34}(iv) to conclude that $B^n(r,s)\to 0$ in $\cL_b(\Lambda_\infty(\alpha))$.

(2)$\Rightarrow$(3): The assumption implies that $\sup_{n\in\N}|B^n(r,s)x|_{k}<\infty$ for every $x\in \Lambda_\infty(\alpha)$ and $k\in\N$. So, an application of  Banach-Steinhaus theorem yields the result.

(3)$\Rightarrow$(4): Since $\Lambda_\infty(\alpha)$ is a  Fr\'echet Schwartz space and hence a  Fr\'echet Montel space, the assumption implies that $B(r,s)$ is uniformly mean ergodic (see \cite[Proposition 2.8]{ABR-0}).

(4)$\Rightarrow$(5): Since $B(r,s)$ is uniformly mean ergodic, $B(r,s)$ is also mean ergodic and hence, we have
\begin{equation}\label{eq.DF}
	\overline{(I-B(r,s))(\Lambda_\infty(\alpha))}\oplus \Ker (I-B(r,s))=\Lambda_\infty(\alpha)
\end{equation}
see \cite[Ch. VIII; \S 3]{Y}. Since $\sigma_p(B(r,s),\Lambda_\infty(\alpha))=\emptyset$ (cf. \eqref{eq.Spcs}), we have that $\Ker (I-B(r,s))=\{0\}$. In view of \eqref{eq.DF} it follows that $	\overline{(I-B(r,s))(\Lambda_\infty(\alpha))}=\Lambda_\infty(\alpha)$.  On the other hand, also $\sigma_c(B(r,s),\Lambda_\infty(\alpha))=\emptyset$ (cf. \eqref{eq.Spcs}). This means that the operator $(I-B(r,s))\colon \Lambda_\infty(\alpha)\to \Lambda_\infty(\alpha)$ is bijective and hence, $1\in \rho(B(r,s),\Lambda_\infty(\alpha))$.

(c) Since  $\sigma_p(B(r,s)',\Lambda_\infty(\alpha)')
=\sigma_r(B(r,s),\Lambda_\infty(\alpha))=\overline{\D}_{|s|}(r)$, we can apply  \cite[Proposition 1.26]{BM} to conclude that the operator $B(r,s)$ cannot be supercyclic in $\Lambda_\infty(\alpha)$.\end{proof}

\begin{prop}\label{P.UM_l} Let $\alpha=\{\alpha_n\}_{n\in\N_0}\subseteq \R$ be a non-negative increasing sequence satisfying  the conditions $\lim_{n\to\infty}\alpha_n=\infty$ and   $\lim_{n\to\infty}(\alpha_{n+1}-\alpha_n)=:l\in (0,\infty)$. The for each $r,s\in\R$ and $r,s\neq0$ the operator $B(r,s)$  is not power bounded neither (uniformly) mean
	ergodic and fails to be supercyclic.
	\end{prop}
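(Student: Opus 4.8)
The plan is to obtain all three assertions as consequences of the spectral description in Theorem~\ref{T.s}(2) for $B(r,s)$ acting on $\Lambda_\infty(\alpha)$, after noting that the hypotheses here already imply those of Theorem~\ref{T.s}. Indeed, as recalled in Remark~\ref{R_Power}, the assumption $\lim_{n\to\infty}(\alpha_{n+1}-\alpha_n)=l\in(0,\infty)$ forces, via the Stolz--Ces\`aro criterion, $\lim_{n\to\infty}\frac{\log(n+1)}{\alpha_n}=0$, hence $\alpha_n\geq c\log(n+1)$ for some $c>0$ and all $n\in\N_0$; thus $B(r,s)\in\cL(\Lambda_\infty(\alpha))$ and Theorem~\ref{T.s}(2) yields
\[
\sigma(B(r,s),\Lambda_\infty(\alpha))=\sigma_r(B(r,s),\Lambda_\infty(\alpha))=\C ,\qquad \sigma_p(B(r,s),\Lambda_\infty(\alpha))=\sigma_c(B(r,s),\Lambda_\infty(\alpha))=\emptyset .
\]

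First I would rule out (uniform) mean ergodicity. If $B(r,s)$ were mean ergodic in $\cL(\Lambda_\infty(\alpha))$, then by \eqref{cesarof} we would have $\frac{B^n(r,s)}{n}\to 0$ in $\cL_s(\Lambda_\infty(\alpha))$, and \cite[Proposition~5.1]{AM} would force $\sigma(B(r,s),\Lambda_\infty(\alpha))\subseteq\overline{\D}$, contradicting $\sigma(B(r,s),\Lambda_\infty(\alpha))=\C$; since a uniformly mean ergodic operator is mean ergodic, both options are excluded. Next I would rule out power boundedness in the same way: if $\{B^n(r,s)\}_{n\in\N_0}$ were equicontinuous, then $\sup_n|B^n(r,s)x|_k<\infty$ for every $x\in\Lambda_\infty(\alpha)$ and every $k\in\N$, hence $\frac1n|B^n(r,s)x|_k\to0$ and again $\frac{B^n(r,s)}{n}\to 0$ in $\cL_s(\Lambda_\infty(\alpha))$, giving the same contradiction. (Alternatively, non-equicontinuity can be exhibited directly from $B^n(r,s)e_0=\sum_{m=0}^n\binom nm r^{n-m}s^m e_m$, whose $|\cdot|_k$-norm grows like $(|r|+|s|e^{kl})^n$ by a Laplace-type estimate, with $|r|+|s|e^{kl}>1$ for $k$ large; but the spectral route is shorter.)

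For non-supercyclicity I would pass to the adjoint $B(r,s)'\in\cL(\Lambda_\infty(\alpha)')$. The duality inclusion $\sigma_r(T,X)\subseteq\sigma_p(T',X'_\beta)$ — valid here by Hahn--Banach, since a nonzero functional annihilating the non-dense range $(T-\lambda I)(X)$ is an eigenvector of $T'$ at $\lambda$ — together with $\sigma_r(B(r,s),\Lambda_\infty(\alpha))=\C$, gives $\sigma_p(B(r,s)',\Lambda_\infty(\alpha)')=\C$. Since this point spectrum contains more than one point, \cite[Proposition~1.26]{BM}, applied exactly as in the proof of Proposition~\ref{P.UM_0}(c), shows that $B(r,s)$ is not supercyclic in $\Lambda_\infty(\alpha)$.

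I do not expect a genuine obstacle: once Theorem~\ref{T.s}(2) is in hand, the argument parallels the $l=0$ case treated in Proposition~\ref{P.UM_0}, with the unboundedness of the spectrum merely reversing the conclusions. The only point to be careful about is that the three ingredients used — $\frac{T^n}{n}\to0$ in $\cL_s(X)$ implies $\sigma(T,X)\subseteq\overline{\D}$, the inclusion $\sigma_r(T,X)\subseteq\sigma_p(T',X'_\beta)$, and \cite[Proposition~1.26]{BM} — are available in the non-normable Fr\'echet setting, which is consistent with their use earlier in the paper.
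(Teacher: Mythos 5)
Your proposal is correct and follows essentially the same route as the paper: Theorem~\ref{T.s}(2) gives $\sigma(B(r,s),\Lambda_\infty(\alpha))=\sigma_r(B(r,s),\Lambda_\infty(\alpha))=\C$, so \cite[Proposition~5.1]{AM} rules out $\frac{B^n(r,s)}{n}\to 0$ in $\cL_s(\Lambda_\infty(\alpha))$ and hence power boundedness and (uniform) mean ergodicity, while the inclusion $\sigma_r\subseteq\sigma_p(B(r,s)',\Lambda_\infty(\alpha)')$ together with \cite[Proposition~1.26]{BM} excludes supercyclicity. Your explicit remarks that the hypothesis $\alpha_n\geq c\log(n+1)$ is automatic when $l>0$ (cf. Remark~\ref{R_Power}) and that the duality inclusion follows from Hahn--Banach are consistent with, and only make more explicit, what the paper uses.
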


	\begin{proof}
		Observe that $\frac{B(r,s)^n}{n}\not\to 0$ in $\cL_s(\Lambda_\infty(\alpha))$ for $n\to\infty$. Otherwise, by \cite[Proposition 5.1]{AM}  we necessarily have $\sigma (B(r,s),\Lambda_\infty(\alpha))\subseteq \overline{\D}$. But this is not the
		case. Indeed, by Theorem \ref{T.s}(2) we have that $\sigma (B(r,s),\Lambda_\infty(\alpha)) = \C$. 
		Since the condition $\frac{B(r,s)^n}{n}\not\to 0$ in $\cL_s(\Lambda_\infty(\alpha))$ for $n\to\infty$ is necessary both for the power
		boundedness and for the mean ergodicity, we can conclude that the operator $B(r,s)$ is
		not power bounded neither mean ergodic.
		
		By  Theorem \ref{T.s}(2)  we also have that $\sigma_p(B(r,s)',\Lambda_\infty(\alpha)')
		=\sigma_r(B(r,s),\Lambda_\infty(\alpha))=\C$. So, we can apply  \cite[Proposition 1.26]{BM} to conclude that the operator $B(r,s)$ cannot be supercyclic in $\Lambda_\infty(\alpha)$.
	\end{proof}

\begin{prop} Let $\alpha=\{\alpha_n\}_{n\in\N_0}\subseteq \R$ be a non-negative increasing sequence satisfying  the conditions $\lim_{n\to\infty}\alpha_n=\infty$ and  $\lim_{n\to\infty}(\alpha_{n+1}-\alpha_n)=:l\in [0,\infty)$.
Let $r,s\in\R$ and $r,s\neq0$.

{\rm (a)} Consider the following statements:
\begin{enumerate}
\item $B(r,s)$ is power bounded in $\cL(\Lambda_\infty(\alpha)')$;
\item $B(r,s)$ is uniformly mean ergodic  in $\cL(\Lambda_\infty(\alpha)')$; 
\item $B(r,s)$ is mean ergodic  in $\cL(\Lambda_\infty(\alpha)')$; 
\item $\frac{B^n(r,s)}{n}\to0$ in $\cL_s(\Lambda_\infty(\alpha)')$ as $n\to\infty$;
\item $|r|+|s|\leq 1$ for $l=0$ and $|r|\leq 1$ for $l>0$.
\end{enumerate}
Then for $l=0$ all the conditions $(1)\div(5)$ are equivalent and for $l>0$, (1)$\Rightarrow$(2)$\Rightarrow$(3)$\Rightarrow$(4)$\Rightarrow$(5).    
{\rm (b)} Moreover, consider the following statements:
\begin{enumerate}
\item $|r|+|s|< 1$;
\item $B^n(r,s)\to0$ in $\cL_b(\Lambda_\infty(\alpha)')$  as $n\to\infty$.
\end{enumerate}
Then $(1)\Rightarrow(2)$.

{\rm (c)} Let $l=0$. Then $B(r,s)$ is not supercyclic.
\end{prop}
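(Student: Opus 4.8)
The plan is to reduce everything to the Banach steps $l_2(\tfrac{1}{v_k})$, $k\in\N$, of $\Lambda_\infty(\alpha)'=\bigcup_{k\in\N}l_2(\tfrac{1}{v_k})$, on which the behaviour of $B(r,s)$ is already settled (Propositions \ref{melpv}, \ref{meulpv} and Theorem \ref{charac spect}), and then to lift the conclusions to $\Lambda_\infty(\alpha)'$ through Lemma \ref{L35} and the spectral description of Theorem \ref{T.s1}. For the weight $\tfrac{1}{v_k}$ one has $\tfrac{1/v_k(n+1)}{1/v_k(n)}=e^{-k(\alpha_{n+1}-\alpha_n)}\in(0,1]$, converging to $e^{-kl}$; hence on $l_2(\tfrac{1}{v_k})$ the quantity $N$ of Remark \ref{norm} equals $e^{-k\inf_n(\alpha_{n+1}-\alpha_n)}$, while $\sigma(B(r,s),l_2(\tfrac{1}{v_k}))=\overline{\D}_{e^{-kl}|s|}(r)$ has spectral radius $|r|+e^{-kl}|s|$. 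When $l=0$ the non-negative terms $\alpha_{n+1}-\alpha_n$ tend to $0$, so $\inf_n(\alpha_{n+1}-\alpha_n)=0$ and $N=1$ for every $k$.

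\emph{Part (a).} The implications $(1)\Rightarrow(4)$ (power boundedness makes $\{B^n(r,s)x\}_n$ bounded, hence $\tfrac{1}{n}B^n(r,s)x\to0$), $(2)\Rightarrow(3)$ ($\cL_b$ finer than $\cL_s$) and $(3)\Rightarrow(4)$ (identity \eqref{cesarof}) are immediate. For $(4)\Rightarrow(5)$: since $\Lambda_\infty(\alpha)'$ is a complete barrelled space, $\tfrac{1}{n}B^n(r,s)\to0$ in $\cL_s(\Lambda_\infty(\alpha)')$ forces $\sigma(B(r,s),\Lambda_\infty(\alpha)')\subseteq\overline{\D}$ by \cite[Proposition 5.1]{AM}, so Theorem \ref{T.s1} yields $\overline{\D}_{|s|}(r)\subseteq\overline{\D}$ when $l=0$, hence $|r|+|s|\le1$ by Lemma \ref{incspe}, and $\{r\}\subseteq\overline{\D}$ when $l>0$, hence $|r|\le1$. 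For $l=0$ I close the cycle by $(5)\Rightarrow(1)$: if $|r|+|s|\le1$, Remark \ref{norm} (with $N=1$) gives $\|B(r,s)\|\le|r|+|s|\le1$ on every $l_2(\tfrac{1}{v_k})$, so $B(r,s)$ is power bounded on each step and, by Lemma \ref{L35}(i), on $\Lambda_\infty(\alpha)'$.

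\emph{The crucial implication $(1)\Rightarrow(2)$.} A step-by-step transfer of uniform mean ergodicity through Lemma \ref{L35}(iii) is unavailable: e.g. for $l=0$, $r>0$, $|r|+|s|=1$ one has $1\in\partial\D_{|s|}(r)=\sigma_c(B(r,s),l_2(\tfrac{1}{v_k}))$, so $B(r,s)$ is \emph{not} uniformly mean ergodic on any step, although it \emph{is} power bounded on $\Lambda_\infty(\alpha)'$ by $(5)\Rightarrow(1)$. The point is that $\Lambda_\infty(\alpha)'$, as the strong dual of the Fr\'echet--Schwartz space $\Lambda_\infty(\alpha)$, is a complete Montel (DF)-space, and that on a complete Montel lcHs every power bounded operator is uniformly mean ergodic: the argument of \cite[Proposition 2.8]{ABR-0} (see also \cite{AA}) goes through, since bounded orbits, hence also the orbits of the Ces\`aro means, are relatively compact; the classical mean ergodic argument (using $B_{[n]}(r,s)(I-B(r,s))\to0$ in $\cL_s$ and $x-B_{[n]}(r,s)x\in{\rm Im}(I-B(r,s))$) then gives mean ergodicity, and equicontinuity of $\{B_{[n]}(r,s)\}$ together with pointwise convergence upgrades this to convergence in $\cL_b(\Lambda_\infty(\alpha)')$, because on a Montel space $\cL_b$ coincides with the topology of uniform convergence on compact sets. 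Combined with the preceding paragraph this yields the equivalence of $(1)\div(5)$ for $l=0$ and the chain $(1)\Rightarrow(2)\Rightarrow(3)\Rightarrow(4)\Rightarrow(5)$ for $l>0$. I expect this implication to be the main obstacle, precisely because uniform mean ergodicity genuinely fails on the constituent Banach spaces and must be recovered from the Montel structure of $\Lambda_\infty(\alpha)'$.

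\emph{Parts (b) and (c).} For (b): on each $l_2(\tfrac{1}{v_k})$ the spectral radius of $B(r,s)$ equals $|r|+e^{-kl}|s|\le|r|+|s|<1$, so Proposition \ref{meulpv}, implication $(1)\Rightarrow(2)$, gives $B^n(r,s)\to0$ in $\cL_b(l_2(\tfrac{1}{v_k}))$ for every $k$; as $\Lambda_\infty(\alpha)'$ is complete, hence a regular (LB)-space, Lemma \ref{L35}(iv) gives $B^n(r,s)\to0$ in $\cL_b(\Lambda_\infty(\alpha)')$. For (c), with $l=0$: by the proof of Theorem \ref{T.s1}(1) the adjoint $B(r,s)'\in\cL(\Lambda_\infty(\alpha))$ satisfies $\sigma_p(B(r,s)',\Lambda_\infty(\alpha))=\D_{|s|}(r)$, a non-degenerate (uncountable) disc since $s\neq0$; hence, by \cite[Proposition 1.26]{BM}, $B(r,s)$ is not supercyclic on $\Lambda_\infty(\alpha)'$.
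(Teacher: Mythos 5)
Your proof is correct and follows essentially the same route as the paper: (4)$\Rightarrow$(5) via the spectral inclusion $\sigma\subseteq\overline{\D}$ from \cite{AM} combined with Theorem \ref{T.s1} and Lemma \ref{incspe}, (5)$\Rightarrow$(1) (for $l=0$) and part (b) by reducing to the Banach steps $l_2(\frac{1}{v_k})$ and lifting with Lemma \ref{L35}, (1)$\Rightarrow$(2) from the Montel structure of $\Lambda_\infty(\alpha)'$ via \cite[Proposition 2.8]{ABR-0}, and (c) from $\sigma_p(B(r,s)',\Lambda_\infty(\alpha))=\D_{|s|}(r)$ plus \cite[Proposition 1.26]{BM}. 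The only negligible discrepancy is the citation for (4)$\Rightarrow$(5): since $\Lambda_\infty(\alpha)'$ is not Fr\'echet, the paper invokes \cite[Remark 5.3]{AM} (sequentially complete barrelled case) rather than \cite[Proposition 5.1]{AM}; your observation that uniform mean ergodicity genuinely fails on the individual steps when $|r|+|s|=1$, so that Lemma \ref{L35}(iii) cannot be used, is a correct and worthwhile remark.
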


\begin{proof} We first consider the case (a).
	
(1)$\Rightarrow$(2): Since $\Lambda_\infty(\alpha)'$ is  a Montel (LB)-space, the assumption implies that $B(r,s)$ is uniformly mean ergodic (see \cite[Proposition 2.8]{ABR-0}). 

(2)$\Rightarrow$(3): It is straightforward.

(3)$\Rightarrow$(4): It is straightforward.

(4)$\Rightarrow$(5): We first observe that $\Lambda_\infty(\alpha)'$ is a sequentially complete barrelled lcHs because it is a complete (LB)-space. So, if $\frac{B^n(r,s)}{n}\to0$ in $\cL_s(\Lambda_\infty(\alpha)')$ for $n\to\infty$, we can apply  \cite[Remark 5.3]{AM} to conclude that $\sigma(B(r , s ), \Lambda_\infty(\alpha)')\subseteq \overline{\D}$. In view of Theorem \ref{T.s1}(1)-(2) and Lemma \ref{incspe}, it follows that $|r|+|s|\leq 1$ for $l=0$ and $|r|\leq 1$ for $l>0$. 

(5)$\Rightarrow$(1): Let $l=0$. Since for each $k\in\N$,  $\sup_{n\in\N_0}\frac{\frac{1}{v_k(n+1)}}{\frac{1}{v_k(n)}}=e^{-k\inf_{n\in\N_0}(\alpha_{n+1}-\alpha_n)}=:N'_k\leq 1$,   the condition  $ |r|+|s|\leq 1$ implies  that $|r|+N'_k|s|\leq 1$ for every $k\in\N$. By  Proposition \ref{melpv} it follows that the operator $B(r,s)$ is power bounded in $l_2\left(\frac{1}{v_k}\right)$ for every $k\in\N$. Taking into account that $\Lambda_\infty(\alpha)'=\cup_{k\in\N} l_2\left(\frac{1}{v_k}\right)$ is the inductive limit of the sequence $\left(l_2\left(\frac{1}{v_k}\right)\right)_{k\in\N}$ of Banach spaces and the condition (B) in Lemma \ref{L33} is clearly satisfied, we can apply  Lemma \ref{L35}(i) to conclude that $B(r,s)$ is power bounded.

We now pass to the case (b).

(1)$\Rightarrow$(2): For each $k\in\N$ we have that $\lim_{n\to\infty}\frac{\frac{1}{v_k(n+1)}}{\frac{1}{v_k(n)}}=e^{-kl}\leq 1$. So,  the condition  $|r|+|s|< 1$ implies that  $|r|+e^{-kl}|s|< 1$ for every $k\in\N$. By  Proposition \ref{meulpv} it follows that 
 $B^n(r,s)\to 0$  in $\cL_b\left(l_2\left(\frac{1}{v_k}\right)\right)$ as $n\to\infty$ for every $k\in\N$. Taking into account that $\Lambda_\infty(\alpha)'=\cup_{k\in\N} l_2\left(\frac{1}{v_k}\right)$ is the inductive limit of the sequence $\left(l_2\left(\frac{1}{v_k}\right)\right)_{k\in\N}$ of Banach spaces, in particular a regular (LB)-space,  and the condition (B) in Lemma \ref{L33} is clearly satisfied, we can apply  Lemma \ref{L35}(iv) to conclude that $B^n(r,s)\to 0$ in $\cL_b(\Lambda_\infty(\alpha)')$ as $n\to\infty$.

(c) For $l=0$ we have  that $\sigma_p(B(r,s)',\Lambda_\infty(\alpha))
=\sigma_r(B(r,s),\Lambda_\infty(\alpha)')={\D}_{|s|}(r)$ (cf. \eqref{eq.SprcSS}). So, we can apply  \cite[Proposition 1.26]{BM} to conclude that the operator $B(r,s)$ cannot be supercyclic.
\end{proof}

\medskip

{\bf Funding}  This work was  partially supported by the INdAM-GNAMPA.

\medskip

{\bf Author Contributions} All the authors wrote the manuscript text and  reviewed  it.

\medskip

{\bf Data availability} Not applicable.

\end{document}